\setlist[enumerate,1]{label=\textup{(\arabic*)}}
\DeclareFontFamily{OMX}{MnSymbolE}{}
\DeclareSymbolFont{MnLargeSymbols}{OMX}{MnSymbolE}{m}{n}
\DeclareFontShape{OMX}{MnSymbolE}{m}{n}{
    <-6>  MnSymbolE5
   <6-7>  MnSymbolE6
   <7-8>  MnSymbolE7
   <8-9>  MnSymbolE8
   <9-10> MnSymbolE9
  <10-12> MnSymbolE10
  <12->   MnSymbolE12
}{}
\DeclareFontShape{OMX}{MnSymbolE}{b}{n}{
    <-6>  MnSymbolE-Bold5
   <6-7>  MnSymbolE-Bold6
   <7-8>  MnSymbolE-Bold7
   <8-9>  MnSymbolE-Bold8
   <9-10> MnSymbolE-Bold9
  <10-12> MnSymbolE-Bold10
  <12->   MnSymbolE-Bold12
}{}
\let\llangle\@undefined
\let\rrangle\@undefined
\DeclareMathDelimiter{\llangle}{\mathopen}%
                     {MnLargeSymbols}{'164}{MnLargeSymbols}{'164}
\DeclareMathDelimiter{\rrangle}{\mathclose}%
                     {MnLargeSymbols}{'171}{MnLargeSymbols}{'171}
\def\XXint#1#2#3{{\setbox0=\hbox{$#1{#2#3}{\int}$ }
\vcenter{\hbox{$#2#3$ }}\kern-.6\wd0}}
\newtheorem{theorem}{Theorem}[section]
\newtheorem{lemma}[theorem]{Lemma}
\newtheorem{proposition}[theorem]{Proposition}
\newtheorem{corollary}[theorem]{Corollary}
\newtheorem*{corollary*}{Corollary}
\newtheorem{atheorem}{Theorem}
\newtheorem{acorollary}[atheorem]{Corollary}
\theoremstyle{definition}
\newtheorem{definition}[theorem]{Definition}
\theoremstyle{remark}
\newtheorem{remark}[theorem]{Remark}
\newcommand{\subref}[2]{\hyperref[#2]{\ref{#1}.\ref{#2}}}
\numberwithin{equation}{section}
\crefname{equation}{}{}
\newcommand{\sq}[1]{\widetilde{#1}}
\newcommand{\R}{\mathbb{R}}
\newcommand{\Z}{\mathbb{Z}}
\newcommand{\N}{\mathbb{N}}
\newcommand{\mc}[1]{\mathcal{#1}}
\newcommand{\ang}[1]{\langle #1 \rangle}
\newcommand{\dang}[1]{\llangle #1 \rrangle}
\newcommand{\D}{\mathcal{D}}
\newcommand{\1}{\mathbf{1}}
\newcommand{\LL}{\langle}
\newcommand{\RR}{\rangle}
\newcommand{\BMO}{\mathrm{BMO}}
\newcommand{\bmo}{\mathrm{bmo}}
\newcommand{\Hilb}{\mathcal{H}}
\newcommand{\E}{\mathbb{E}}
\definecolor{darkgreen}{HTML}{005239}
\author{Francesco D'Emilio}
\address{Francesco D'Emilio\hfill\break\indent 
 Department of Mathematics \hfill\break\indent 
Washington University in St. Louis \hfill\break\indent 
 One Brookings Drive \hfill\break\indent 
St. Louis, MO 63130 USA}
\email{demilio@wustl.edu}
\author{Yongxi Lin}
\address{Yongxi Lin \hfill\break\indent 
 Department of Mathematical Sciences \hfill\break\indent 
Carnegie Mellon University \hfill\break\indent 
5000 Forbes Ave
  \hfill\break\indent 
Pittsburgh, PA 15213 USA}
\email{aaronlin@andrew.cmu.edu}
\author{Nathan A. Wagner} 
\address{Nathan A. Wagner\hfill\break\indent 
 Department of Mathematical Sciences \hfill\break\indent 
George Mason University \hfill\break\indent 
4400 University Dr
  \hfill\break\indent 
Fairfax, VA 22030 USA}
\email{nwagner8@gmu.edu}
\author{Brett D. Wick}
\address{Brett D. Wick \hfill\break\indent 
 Department of Mathematics \hfill\break\indent 
Washington University in St. Louis \hfill\break\indent 
 One Brookings Drive \hfill\break\indent 
St. Louis, MO 63130 USA}
\email{bwick@wustl.edu}
\title{Optimal Sparse Bounds and Commutator Characterizations Without Doubling}
\thanks{\begin{footnotesize}Nathan A. Wagner was supported by National Science Foundation grants DMS 203272 and 2549719.\end{footnotesize}}
\thanks{\begin{footnotesize}
Brett D. Wick was partially supported by National Science Foundation DMS 2349868.
\end{footnotesize}}
\begin{document}
\begin{abstract}
We examine dyadic paraproducts and commutators in the non-homogeneous setting, where the underlying Borel measure $\mu$ is not assumed to be doubling. We first establish a pointwise sparse domination for dyadic paraproducts and related operators with symbols $b \in \BMO(\mu)$, improving upon an earlier result of Lacey, where the symbol $b$ was assumed to satisfy a stronger Carleson-type condition, that coincides with $\BMO$ only in the doubling setting. As an application of this result, we obtain sharpened weighted inequalities for the commutator of a dyadic Hilbert transform $\mathcal{H}$ previously studied by Borges, Conde Alonso, Pipher, and the third author. We also characterize the symbols for which the commutator $[\mathcal{H},b]$  is bounded on $L^p(\mu)$ for $1<p<\infty$ and provide some interesting examples to prove that this class of symbols strictly depends on $p$ and is nested between symbols satisfying the $p$-Carleson packing condition and symbols belonging to martingale BMO (even in the case of absolutely continuous measures).
    
\end{abstract}

\maketitle

\section{Introduction}

The theory of commutators in harmonic analysis presents a striking dichotomy: while completely understood in homogeneous settings through the classical BMO characterization of Coifman, Rochberg, and Weiss \cite{CRW1976}, these operators can exhibit a fundamentally different behavior when the underlying measure lacks the doubling property. This breakdown is not a mere technical inconvenience: in the nonhomogeneous setting there appears to be a fundamental bifurcation between continuous and dyadic Calder\'{o}n-Zygmund models, breaking a connection that proved to be immensely powerful and fruitful in the doubling case. This reveals that our standard tools, from dyadic decompositions to sparse domination, require fundamental reconsideration. 
 \par 

Recent progress in nonhomogeneous dyadic theory builds upon the pioneering works \cite{TREIL2010}, \cite{LSMP}, and \cite{Lacey_2017}, where the authors developed the unweighted theory for martingale transforms, Haar shifts, paraproducts with martingale BMO symbols, and commutators with martingale transforms. However, classical results are not always recovered as seamlessly as one might expect; additional structural assumptions are often required to obtain meaningful answers. Despite powerful advances in the weighted theory in the recent years \cite{CPW, BCAPW, BBDPW}, basic questions remain unresolved: \begin{enumerate}
    \item To what extent can sparse domination be extended beyond current limitations?
    \item Can the best known weighted estimates for dyadic operators be improved?
    \item Why does the martingale BMO condition fail to characterize the boundedness of commutators, and is there a viable substitute that does?
\end{enumerate} 
The aim of this paper is to shed light on these questions.
These issues are not mere technicalities: nonhomogeneous measures naturally emerge in probability theory (via random measures), in geometric measure theory (through rectifiable measures), and in applied harmonic analysis (in the context of non-uniform sampling). A thorough understanding of operator bounds in such settings is fundamental to extending harmonic analysis beyond its traditional framework, with far-reaching applications to partial differential equations and signal processing, and deep connections to Hankel operators, weak factorization, and div–curl lemmas \cite{Wick2020}.

\subsection*{About Nonhomogeneous Settings}

In the classical doubling setting, the theory is remarkably clean. Commutators with Calder\'on-Zygmund operators are bounded if and only if the symbol belongs to BMO. Paraproducts with BMO symbols satisfy $L^p$ bounds for all $1 < p < \infty$. The powerful machinery of dyadic harmonic analysis, including the $T(1)$ theorem \cite{DavidJourne84} and paraproduct decompositions \cite{LPPW2010, HLW2016, HPW2018}, reduces continuous problems to dyadic ones, where control often follows from variants of the Carleson embedding theorem \cite{TolsaT1,NTV2003,HPTV}. Moreover, continuous BMO spaces can be recovered from dyadic ones through finite intersections or related constructions \cite{ GarnettJones, Mei}. 

This elegant theory collapses in the nonhomogeneous setting. Treil's impactful work \cite{TREIL2010} revealed that $L^p$ bounds for paraproducts depend essentially on $p$ through a ``$p$-Carleson packing'' condition, a phenomenon absent in the doubling case. Even more surprisingly, these bounds do not guarantee $L^p$ bounds for commutators with martingale transforms, which coincide with Haar multipliers in simpler settings. The endpoint case exhibits further pathologies: while Bonami et al. \cite{BonamiJXYZ} proved $H_1^b \to L^1$ estimates for commutators with martingale transforms, the analogous result for the dyadic Hilbert transform $\mathbb{S}$, introduced by Petermichl \cite{Petermichl}, requires an additional \emph{balanced condition} on the measure, introduced by Lopez-Sanchez, Martell, and Parcet \cite{LSMP}. Moreover, recovering continuous BMO spaces from dyadic ones only partially works for a specific class of BMO symbols \cite{TolsaRBMO,Conde-AlonsoParcet, CondeRBMO}, and this recovery depends essentially on a \emph{polynomial growth} condition on the underlying measure, which is entirely different from the balanced condition.
For those familiar with probability theory and the martingale setting, an intuitive justification of the ``Paradise Lost'' is that even the unit interval, endowed with the dyadic filtration and a non doubling measure, is \emph{not} a regular probability space, loosely meaning that measures of neighboring intervals do not necessarily relate well to each other. Whenever a dyadic operator reflects the interaction of dyadic cubes at different scales, there is no way to relate averages on the smaller cube to averages on the bigger cube.

\subsection*{Hints from Sparse Domination }

Sparse domination has emerged as the key tool for proving sharp weighted inequalities in modern harmonic analysis. The principle is elegant: if an operator can be dominated pointwise by sparse averages, then weighted estimates follow immediately. However, achieving sparse domination in nonhomogeneous settings has proven to be surprisingly difficult. Conde Alonso, Pipher, and the third author \cite{CPW} showed that classical sparse domination for $\mathbb{S}$ strikingly fails in the non-doubling setting, even when natural dyadic regularity assumptions on the measure are imposed, the so-called ``balanced condition". The authors instead proved a modified sparse domination for dyadic shifts: the modification, involving averages on neighboring intervals, highlighted the fundamental obstacles in the nonhomogeneous setting and the limitations of current sparse domination techniques in the general setting. By the same token, weighted inequalities require a stronger condition on the weight than the usual Muckenhoupt $A_p$ condition. This class of weights will be called the \emph{balanced $A_p$ class}. To further justify the relevance of sparse domination techniques, we also notice that a powerful version of ``continuous" sparse domination in the probabilistic setting was recently proved in \cite{contsparse} to obtain dimensionless $L^p$ bounds for the Bakry–Riesz vector on manifolds with bounded geometry.

In the specific case of paraproducts and commutators, recent work has developed sparse domination in wide-ranging homogeneous settings, including the Bloom weighted BMO setting \cite{HF2023} and commutators with continuous Calder\'{o}n-Zygmund operators \cite{LORR2017}. The non-homogeneous setting, by contrast, has remained largely unexplored. A key barrier has been Lacey's requirement \cite{Lacey_2017} of a packing condition on the symbol of dyadic paraproducts to obtain sparse domination, which is genuinely stronger than martingale BMO in nonhomogeneous settings. We emphasize this distinction: there exist specific non-doubling measures for which Lacey's packing condition is strictly stronger than the natural martingale BMO condition, and we provide an explicit example in Section \ref{Section 2}, while in the doubling case they always coincide. Surpassing this barrier to achieve sparse domination with only the BMO assumption has been an open problem, as existing techniques fundamentally relied on the extra structure provided by the packing condition.

\subsection*{The Dyadic Hilbert Transform}

Perhaps the most mysterious operator in this story is Petermichl's dyadic Hilbert transform $\Hilb$, defined by $\Hilb(h_I) = \text{sign}(I) h_{I^s}$ where $I^s$ is the dyadic sibling of $I$. Unlike the classical shift operator $\mathbb{S}$, this operator satisfies $\Hilb^2 = -I$ in perfect analogy with the classical Hilbert transform, making it natural for studying dyadic BMO in multiparameter and Banach-valued settings \cite{DKPS2023, DP2023}.

Yet $\Hilb$ exhibits baffling behavior in the nonhomogeneous setting. Recent work \cite{BCAPW} showed that even when $\mu$ is sibling balanced - a condition that \emph{characterizes} the boundedness of $\Hilb$ on $L^p(\mu)$ - the martingale BMO norm cannot be characterized by $\|[\Hilb,b]\|_{L^2(\mu) \to L^2(\mu)}$. They proved only a partial characterization:
\begin{equation}
\|b\|_{\mathcal{C}} \lesssim \|[\mathcal{H},b]\|_{L^2(\mu) \to L^2(\mu)} \lesssim \|b\|_{\BMO}, \label{eq:partialcharacterize}
\end{equation}
where $\|b\|_{\mathcal{C}}$ is the Carleson packing norm. The complete characterization of symbols yielding bounded commutators remained out of reach. Moreover, weighted estimates required introducing another subclass of weights denoted as $\widehat{A}_p$, and relied on the Cauchy integral trick, yielding:
\begin{equation}
\|[\Hilb,b]\|_{L^p(w) \to L^p(w)} \leq C(p,[w]_{\widehat{A}_p}) \|b\|_{\BMO}, \quad w \in \widehat{A}_p. \label{eq:hatweighted}
\end{equation}
It was left open whether the $\widehat{A}_p$ condition is sharp, while the operator $\Hilb$ itself was proved to obey weighted estimates for a strictly larger weight class.

\subsection{Main Contributions}

This paper provides answers to all the questions posed in the introduction and further explains some of these phenomena through two main results. 

 First, we prove sparse domination for dyadic paraproducts under only the natural BMO assumption, removing Lacey's packing condition entirely.

\begin{atheorem}[Sparse domination with BMO symbols] \label{thm:A} 
Let $\mu$ be an atomless Radon measure in $\R^n$ with $0< \mu(Q)< \infty$ for every $Q \in \D$, and $b \in \BMO$. Then any $T \in \{\Pi_b, \Pi^\ast_b, \Delta_b\}$ satisfies the following: for every $f \in L^1(\mu)$ compactly supported on $Q_0 \in \D$, there exists a dyadic sparse family $\mathcal{S}= \mathcal{S}(f)$ such that 
$$|Tf(x)| \lesssim \|b\|_\BMO \mathcal{A}_\mathcal{S}|f|(x), \quad \text{a.e. }x \in Q_0,$$
where the implicit constant depends on $T$ and $n$.
Consequently, for $T \in \{ \Pi_b, \Pi_b^\ast, \Delta_b \}$, every $1<p<\infty$ and $w \in A_p^\D(\mu)$, there exists a constant $C=C(p,n,T)$ such that
\[\|T\|_{L^p(w) \to L^p(w)} \leq C \|b\|_\BMO [w]_{A_p^\D(\mu)}^{\max \big(1, \frac{1}{p-1}\big)}.\]
\end{atheorem}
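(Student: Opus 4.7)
The plan is to prove the pointwise sparse bound via a Lerner--Nazarov--Ombrosi style iterative stopping-time construction, with the key novelty being that the ``bad set'' step is driven by an $L^2$--weak-type inequality for a localized paraproduct rather than by any pointwise estimate on individual Haar functions. This is precisely what lets me drop Lacey's packing hypothesis. I will write the argument for $T=\Pi_b$; the operators $\Pi_b^\ast$ and $\Delta_b$ are handled by the same template, since each is built from the Haar coefficients $\langle b,h_I\rangle$ and their analysis relies on the same orthogonality.

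\textbf{Step 1: local $L^2$ estimate via martingale BMO.} Fix $Q_0$, $f$ supported in $Q_0$, and a dyadic subcube $F\subseteq Q_0$. Let $\mathcal{G}(F)$ be the maximal dyadic $G\subsetneq F$ with $\langle|f|\rangle_G>2\langle|f|\rangle_F$, so $\mu\bigl(\bigcup\mathcal{G}(F)\bigr)\le\tfrac12\mu(F)$. Split the Haar sum for $\Pi_b(f\mathbf{1}_F)$ into those $I$ not contained in any $G\in\mathcal{G}(F)$ (good) and those contained in some $G$ (bad), writing $\Pi_b(f\mathbf{1}_F)=\Pi_b^{\mathrm{good}}f+\Pi_b^{\mathrm{bad}}f$. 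By Haar orthogonality and the bound $|\langle f\rangle_I|\le 2\langle|f|\rangle_F$ on good cubes,
\[
\|\Pi_b^{\mathrm{good}}f\|_{L^2(\mu)}^2\le 4\langle|f|\rangle_F^2\sum_{I\subseteq F}|\langle b,h_I\rangle|^2\le 4\|b\|_{\BMO}^2\langle|f|\rangle_F^2\mu(F),
\]
where the final step uses only the martingale BMO condition. Chebyshev then yields $\mu\bigl\{x\in F:|\Pi_b^{\mathrm{good}}f(x)|>C\|b\|_{\BMO}\langle|f|\rangle_F\bigr\}\le 4\mu(F)/C^2$. Meanwhile $\Pi_b^{\mathrm{bad}}f$ is supported in $\bigcup\mathcal{G}(F)$, because each Haar function $h_I$ with $I\subseteq G$ is supported in $G$.

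\textbf{Step 2: sparse stopping family and pointwise decomposition.} Build $\mathcal{S}$ iteratively: put $Q_0\in\mathcal{S}$, and for each $F\in\mathcal{S}$ let $\mathrm{ch}_\mathcal{S}(F)$ be the maximal dyadic subcubes of the bad set $B_F:=\bigcup\mathcal{G}(F)\cup\bigl\{|\Pi_b^{\mathrm{good}}f|>C\|b\|_{\BMO}\langle|f|\rangle_F\bigr\}$. For $C$ large enough Step 1 gives $\mu(B_F)\le\tfrac34\mu(F)$, making $\mathcal{S}$ sparse. Decompose $\Pi_b f=\sum_{F\in\mathcal{S}}\Pi_b^F f$, where $\Pi_b^F$ keeps only the Haar modes $I$ whose minimal stopping ancestor equals $F$. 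A direct inspection shows that $\Pi_b^F f$ agrees with $\Pi_b^{\mathrm{good}}f$ on $E_F:=F\setminus\bigcup\mathrm{ch}_\mathcal{S}(F)$ (the subtracted modes are supported inside the $F''\in\mathrm{ch}_\mathcal{S}(F)$) and is constant on each $F''\in\mathrm{ch}_\mathcal{S}(F)$ (since every remaining Haar mode corresponds to an $I\supseteq F''$ and $h_I$ is constant on $F''$). Propagating the $L^\infty$ bound on $E_F$ to each $F''$ via this constancy yields $\|\Pi_b^F f\|_{L^\infty(F)}\le C\|b\|_{\BMO}\langle|f|\rangle_F$, and summing over the stopping chain $\mathcal{S}(x)$ gives the pointwise sparse bound $|\Pi_b f(x)|\le C\|b\|_{\BMO}\mathcal{A}_\mathcal{S}|f|(x)$.

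\textbf{Step 3: weighted consequence and main obstacle.} The weighted inequality follows from the standard sharp $L^p(w)$ bound for sparse operators in the dyadic non-doubling setting, giving the exponent $\max(1,1/(p-1))$ with respect to $[w]_{A_p^\D(\mu)}$. The technical heart of the argument, and the point where Lacey's stronger packing hypothesis appeared indispensable, is precisely the step from the bad-set $L^2$ estimate in Step 1 to a genuine $L^\infty$ bound for $\Pi_b^F f$ on $E_F$ and its child cubes: in the non-doubling setting Haar functions are not uniformly bounded by $\mu(I)^{-1/2}$, so a direct pointwise attack on $\Pi_b^F f$ is hopeless. My workaround is to measure the bad set entirely via $L^2$ orthogonality and to exploit the dyadic constancy of $\Pi_b^F f$ on each $F''\in\mathrm{ch}_\mathcal{S}(F)$ to transfer the $E_F$-bound to all of $F$, thereby sidestepping the $\|h_I\|_\infty$ factors that forced Lacey's packing condition.
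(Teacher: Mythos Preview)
Your Step 2 contains a genuine gap at the claim ``propagating the $L^\infty$ bound on $E_F$ to each $F''$ via this constancy.'' The constant value of $\Pi_b^F f$ on a stopping child $F''$ is exactly the partial sum
\[
c_{F''}=\sum_{I:\,F''\subsetneq I\subseteq F}\langle f\rangle_I\,\Delta_I b(y),\qquad y\in F'',
\]
which also equals $\langle\Pi_b^{\mathrm{good}}f\rangle_{F''}$. Constancy only tells you that the value on $F''$ is this single number; it does \emph{not} relate that number to the pointwise values of $\Pi_b^F f$ on the disjoint set $E_F$. Your stopping rule, based on the superlevel set $\{|\Pi_b^{\mathrm{good}}f|>C\|b\|_{\BMO}\langle|f|\rangle_F\}$, guarantees only that $\widehat{F''}$ contains \emph{some point} where $|\Pi_b^{\mathrm{good}}f|\le C\|b\|_{\BMO}\langle|f|\rangle_F$; there is no mechanism to transfer a pointwise bound at one point of $\widehat{F''}$ to the average $\langle\Pi_b^{\mathrm{good}}f\rangle_{F''}$ or to the partial sum $c_{F''}$. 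In the non-doubling setting the number of generations between $F''$ and $F$ is unbounded and each $\Delta_I b$ can contribute up to $\|b\|_{\BMO}$ in size, so nothing in your hypotheses prevents $c_{F''}$ from being arbitrarily large.

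The paper closes precisely this gap by replacing the superlevel-set criterion with a stopping condition driven by the \emph{maximal truncation}
\[
T^\#f(x)=\sup_{Q_0\ni x}\Bigl|\sum_{Q_0\subsetneq Q}T_Qf(x)\Bigr|,
\]
after first proving (Proposition~\ref{weak (1,1) max. trunc}, via the nonhomogeneous Calder\'on--Zygmund decomposition) that $T^\#$ is weak $(1,1)$ for each $T\in\{\Pi_b,\Pi_b^\ast,\Delta_b\}$ when $b\in\BMO$. With that rule, non-selection of the parent $\widehat{Q_j}$ directly bounds the partial sum $\sum_{\widehat{Q_j}\subsetneq Q\subseteq Q_0}T_Qf$ on all of $Q_j$ (it is constant there), and only the single jump term $T_{\widehat{Q_j}}f$ remains. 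That term is handled in one line using $\|\Delta_{\widehat{Q_j}}b\|_\infty\le\|b\|_{\BMO}$ together with the maximal-function stopping bound $\langle|f|\rangle_{\widehat{Q_j}}\lesssim\langle|f|\rangle_{Q_0}$ (for $\Delta_b$ one also needs $\langle|f|\rangle_{Q_j}$, whence the extra average in the iteration). This is exactly the spot where Lacey's packing condition was previously invoked, and the paper's point is that the $L^\infty$ half of martingale $\BMO$, namely $\sup_Q\|\Delta_Q b\|_\infty<\infty$, suffices for this one-step correction once the maximal truncation has absorbed the rest. Your $L^2$/Chebyshev estimate in Step~1 is correct and uses only the Carleson half of $\BMO$, but a pointwise superlevel set does not control partial sums at the stopping cubes; a stopping rule based on $T^\#$ (or, equivalently, on $M_\D(\Pi_b^{\mathrm{good}}f)$, since $c_{F''}=\langle\Pi_b^{\mathrm{good}}f\rangle_{F''}$) is what is missing.
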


This immediately unlocks previously inaccessible weighted estimates for commutators.

\begin{acorollary}[Sharp weighted inequalities for Haar shifts]
\label{cor:B}
Suppose that $\mu$ is atomless and $\mathscr{H}$ is a generalized Haar system such that $(\mu,\mathscr{H})$ is balanced as in \cref{def of balanced pairs}. Let $1<p<\infty$, $b \in \BMO$, $w \in A_p^{b}(\mu)$ and $T$ a Haar shift of complexity $(s,t)$ with $s+t=N$. Then there exists a positive constant $C=C(p,N, \mu, \mathscr{H},T)$ depending exponentially on $N$ such that for all $f \in L^p(w)$:
\begin{equation*} 
\|[T,b]f\|_{L^p(w)} \leq C[w]_{A^\D_p(\mu)}^{\big(1 + \frac{1}{p-1} - \frac{2}{p}+\max \big(1, \frac{1}{p-1} \big)\big)}[w]_{A_p^b(\mu)}^{\frac{2^{N-1}}{p}}\|b\|_{\BMO} \|f\|_{L^p(w)}. 
\end{equation*}  
Moreover, if $T$ is $L^1$ normalized as in \cref{D: L^1 normalized}, we have 
\begin{equation*} 
\|[T,b]f\|_{L^p(w)} \leq C[w]_{A_p^\D(\mu)}^{2\max \big(1, \frac{1}{p-1}\big)}\|b\|_{\BMO} \|f\|_{L^p(w)},
\end{equation*}
where $C=C(p,N,T)$ depends linearly on the complexity.
\end{acorollary}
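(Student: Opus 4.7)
The plan is to decompose the commutator $[T,b]$ via the standard dyadic paraproduct expansion and then combine Theorem~\ref{thm:A} with the weighted Haar-shift bounds of \cite{BCAPW}. Writing $bf = \Pi_b f + \Pi_b^\ast f + \Delta_b f$ pointwise and subtracting the analogous expansion of $b\cdot Tf$, one obtains
\begin{equation*}
[T,b]f \;=\; T\Pi_b f + T\Pi_b^\ast f + T\Delta_b f \;-\; \Pi_b(Tf) - \Pi_b^\ast(Tf) - \Delta_b(Tf),
\end{equation*}
so it suffices to control the $L^p(w)\to L^p(w)$ operator norm of each of these six composed operators.

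For the three paraproducts, Theorem~\ref{thm:A} immediately gives
\begin{equation*}
\|\Pi_b\|_{L^p(w)\to L^p(w)} + \|\Pi_b^\ast\|_{L^p(w)\to L^p(w)} + \|\Delta_b\|_{L^p(w)\to L^p(w)} \;\lesssim\; \|b\|_{\BMO}\,[w]_{A_p^\D(\mu)}^{\max(1,\,1/(p-1))}.
\end{equation*}
For the Haar shift $T$, the weighted estimate of \cite{BCAPW} yields, under the balanced assumption and $w\in A_p^b(\mu)$, a bound of the form $\|T\|_{L^p(w)\to L^p(w)} \lesssim [w]_{A_p^\D(\mu)}^{1+\frac{1}{p-1}-\frac{2}{p}}[w]_{A_p^b(\mu)}^{2^{N-1}/p}$ with implicit constant depending exponentially on the complexity $N$. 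Taking the product of these two bounds for each of the six terms in the decomposition produces exactly the $A_p^\D(\mu)$ exponent $1+\frac{1}{p-1}-\frac{2}{p}+\max(1,\frac{1}{p-1})$ and the $A_p^b(\mu)$ exponent $\frac{2^{N-1}}{p}$ claimed in the statement, the $A_p^b$-dependence arising solely from $T$.

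For the sharper $L^1$-normalized version I would instead invoke the improved Haar-shift estimate available under \cref{D: L^1 normalized}: in that normalization the $A_p^b$ factor drops out and $\|T\|_{L^p(w)\to L^p(w)} \lesssim [w]_{A_p^\D(\mu)}^{\max(1,\,1/(p-1))}$ holds with only linear dependence on $N$. Substituting this into the same paraproduct decomposition yields the exponent $2\max(1,1/(p-1))$ directly, with linear dependence on the complexity.

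I do not anticipate a serious analytic obstacle: the heavy lifting is concentrated in Theorem~\ref{thm:A} (which removes the Lacey packing barrier from the paraproduct bound) and in the pre-existing Haar-shift estimates of \cite{BCAPW}, so the present corollary is essentially the algebraic composition of two sharp weighted inequalities. The one point requiring brief care is checking that the three paraproducts produced by expanding $bf$ are exactly the ones covered by Theorem~\ref{thm:A} (with the same $\BMO$ symbol $b$) and that the composition $T\circ \Pi_b$ and $\Pi_b \circ T$ can be estimated by the product of the respective norms without hidden cancellation being required — i.e., that simply invoking the triangle inequality on the six-term decomposition is enough to match the stated exponents.
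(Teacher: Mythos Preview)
Your decomposition $bf=\Pi_bf+\Pi_b^\ast f+\Delta_bf$ is not one of the two valid paraproduct identities in the nonhomogeneous setting; the correct expansions are
\[
bf=\Pi_bf+\Pi_b^\ast f+\Lambda_bf=\Pi_bf+\Delta_bf+\Lambda_b^0f,
\]
and these coincide with your formula only when $\mu$ is doubling. Whichever correct version you pick, the ``third operator'' ($\Lambda_b$ or $\Lambda_b^0=\Pi_{(\cdot)}b$) is \emph{not} among $\{\Pi_b,\Pi_b^\ast,\Delta_b\}$, is not covered by Theorem~\ref{thm:A}, and in fact is not bounded on $L^p(w)$ for $b\in\BMO$ in general (e.g.\ $\Lambda_b$ is a martingale multiplier with symbol $c_Q=\langle b,h_Q^2\rangle$, which need not lie in $\ell^\infty$). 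So your six-term triangle-inequality argument breaks precisely at the point you flagged as ``requiring brief care''.

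The missing idea, which the paper supplies, is that one uses the second decomposition and observes that the \emph{commutator} $[T,\Lambda_b^0]$ is itself a Haar shift of the same complexity with coefficients bounded by $\|b\|_{\BMO}$; hence it inherits the weighted Haar-shift bound from \cite{BBDPW} directly, with no product of two operator norms needed. The remaining two pieces $[T,\Pi_b]$ and $[T,\Delta_b]$ are handled exactly as you propose, by the crude estimate $\|[T,S]\|\le 2\|T\|\|S\|$ together with Theorem~\ref{thm:A} and the shift bound. The stated exponents then come from the \emph{worst} of these three contributions, which is the product bound for $[T,\Pi_b]$ and $[T,\Delta_b]$.
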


For the dyadic Hilbert transform specifically, we obtain even more refined estimates, that were previously inaccessible due to the lack of reverse Hölder inequalities for $A_p^{sib}$ weights. Our approach circumvents this obstacle entirely.

\begin{acorollary} \label{cor:C}
Suppose $\mu$ is sibling balanced and atomless. Let $1<p<\infty$, $b \in \BMO$, and $w \in A_p^{sib}(\mu)$. Then there exists a positive constant $C=C(p, \Hilb, \mu)$ such that for all $f \in L^p(w)$:
\begin{equation*} 
\|[\Hilb,b]f\|_{L^p(w)} \leq C[w]_{A^\D_p(\mu)}^{\big(1 + \frac{1}{p-1} - \frac{2}{p}+\max \big(1, \frac{1}{p-1} \big)\big)}[w]_{A_p^{sib}(\mu)}^{1/p}\|b\|_{\BMO} \|f\|_{L^p(w)}. 
\end{equation*}
\end{acorollary}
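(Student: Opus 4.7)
The plan is to derive \cref{cor:C} as the specialization of \cref{cor:B} to the Haar shift $T = \Hilb$. Since $\Hilb(h_I) = \mathrm{sign}(I)\, h_{I^s}$ permutes Haar functions within sibling pairs, it is a Haar shift of minimal complexity, and in the normalization of \cref{cor:B} this corresponds to $N = 1$; plugging this into the exponent $2^{N-1}/p$ gives exactly $1/p$, which matches the factor $[w]_{A_p^{sib}(\mu)}^{1/p}$ in the target inequality. The exponent on $[w]_{A_p^\D(\mu)}$, namely $1 + 1/(p-1) - 2/p + \max(1, 1/(p-1))$, is literally identical in the two statements.

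What I would carry out explicitly is: (i) fix the generalized Haar system $\mathscr{H}$ naturally associated with $\Hilb$ and verify that the sibling balanced hypothesis on $\mu$ from \cref{cor:C} implies the balanced pair hypothesis on $(\mu, \mathscr{H})$ required by \cref{cor:B}; (ii) check that under this identification the two weight classes $A_p^b(\mu)$ and $A_p^{sib}(\mu)$ coincide; and (iii) track constants to confirm that the $N = 1$ specialization collapses the dependence $C(p, N, \mu, \mathscr{H}, T)$ from \cref{cor:B} into the constant $C(p, \Hilb, \mu)$ claimed. Once these are in place, the conclusion is immediate.

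If a more self-contained argument is preferred, one can instead expand the commutator through the paraproduct decomposition $bg = \Pi_b g + \Pi_b^\ast g + \Delta_b g$, writing
\begin{equation*}
[\Hilb, b] f \;=\; \sum_{T \in \{\Pi_b, \Pi_b^\ast, \Delta_b\}} \bigl( \Hilb(T f) - T(\Hilb f) \bigr),
\end{equation*}
and bounding each composition by combining the paraproduct estimate from \cref{thm:A}, $\|T\|_{L^p(w) \to L^p(w)} \lesssim \|b\|_\BMO [w]_{A_p^\D(\mu)}^{\max(1, 1/(p-1))}$, with the known weighted bound for $\Hilb$ on $A_p^{sib}(\mu)$ from \cite{BCAPW}, which contributes the factor $[w]_{A_p^\D(\mu)}^{1 + 1/(p-1) - 2/p} [w]_{A_p^{sib}(\mu)}^{1/p}$. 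Summing the six resulting terms yields precisely the claimed inequality.

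The main obstacle is not in this final assembly, which is a bookkeeping of exponents, but in the upstream \cref{thm:A}: its sparse bound for paraproducts under only the natural $\BMO$ hypothesis on the symbol is the new ingredient that unlocks the refinement. Previous approaches to the weighted commutator estimate relied on the Cauchy integral trick, which demanded a reverse Hölder inequality for $A_p^{sib}$ weights that is not available, and could therefore only reach the smaller class $\widehat{A}_p$ as in \eqref{eq:hatweighted}; the sparse route circumvents that obstruction entirely.
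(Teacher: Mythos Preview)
Your first approach fails at step (i): the sibling balanced hypothesis on $\mu$ does \emph{not} imply that $(\mu,\mathscr{H})$ is a balanced pair in the sense of \cref{def of balanced pairs}. The paper is explicit that the sibling balanced class is strictly larger than the balanced class (see the discussion at the start of Section~\ref{Section 4}, and the measure constructed in the proof of \cref{THM: sharp containment}, which is sibling balanced but not balanced). Correspondingly, $A_p^{sib}(\mu)$ is a genuinely larger weight class than $A_p^b(\mu)$; this enlargement is the entire content of \cref{cor:C} over \cref{cor:B}. So \cref{cor:C} is not a specialization of \cref{cor:B}, and the verifications you outline in (i) and (ii) cannot succeed.

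Your second approach is closer to the paper's argument but contains two gaps. First, the identity $bg = \Pi_b g + \Pi_b^\ast g + \Delta_b g$ is not one of the two valid paraproduct decompositions; the correct options are $bf = \Pi_b f + \Pi_b^\ast f + \Lambda_b f$ or $bf = \Pi_b f + \Delta_b f + \Lambda_b^0 f$. Second, and more seriously, in either correct decomposition the third operator ($\Lambda_b$ or $\Lambda_b^0$) is \emph{not} bounded on $L^p(w)$ for general $b\in\BMO$, so the naive estimate $\|[\Hilb,T]\|\le 2\|\Hilb\|\,\|T\|$ is unavailable for that piece. The paper handles this (see the proof of \cref{sharp commutator bounds}) by observing that $[\Hilb,\Lambda_b^0]$ is itself a Haar shift of the same structure as $\Hilb$, with coefficients controlled by $\|b\|_\BMO$; one then applies the $A_p^{sib}$-weighted bound for $\Hilb$ from \cref{sparseresultforH} directly to this commutator. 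The remaining two pieces $[\Hilb,\Pi_b]$ and $[\Hilb,\Delta_b]$ are handled exactly as you suggest, combining \cref{thm:A} with \cref{sparseresultforH}.
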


Our second main result is a complete characterization of the symbols $b$ for which the commutator $[\Hilb, b]$ is bounded on $L^p(\mu)$, revealing an unexpected phenomenon.

\begin{atheorem}[Characterization of Dyadic Hilbert Transform Commutator Bounds]
\label{thm:D}
Let $b$ be locally integrable, $1 < p < \infty$, and $\mu$ sibling balanced. The commutator $[\Hilb, b]$ extends to a bounded operator on $L^p(\mu)$ if and only if:
\begin{enumerate}
    \item 
    The symbol $b \in \bmo_{\alpha(p)}(\mu)$, where $\alpha(p)=\max(p,p')$;
    \item 
    The sequence $\beta=\{\beta_Q\}_{Q \in \D}$ with $\beta_Q= c_Q-c_{Q^s}$ and $c_Q= \langle b, h_Q^2 \rangle$ satisfies $\|\beta\|_{\ell^{\infty}}<\infty$.
\end{enumerate}
In other words, for $1< p < \infty$ and $\alpha(p):=\max(p,p')$:
\[ 
[\BMO]_p(\mu)=\{b \in \bmo_{\alpha(p)}(\mu): \beta \in \ell^\infty \}
\] 
and moreover 
\[ 
\BMO(\mu) \subsetneq [\BMO]_p(\mu) \subsetneq \bmo_p(\mu).
\] 
\end{atheorem}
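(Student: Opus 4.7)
The plan is to reduce the analysis of $[\Hilb, b]$ to a sum of commutators of $\Hilb$ with the three paraproducts via the martingale decomposition
\[
bf = \Pi_b f + \Pi_b^\ast f + \Delta_b f,
\]
so that
\[
[\Hilb, b]f = [\Hilb, \Pi_b]f + [\Hilb, \Pi_b^\ast]f + [\Hilb, \Delta_b]f.
\]
Expanding each commutator in the Haar basis and exploiting that $\Hilb h_Q = \mathrm{sign}(Q) h_{Q^s}$, the coefficients $c_Q = \langle b, h_Q^2\rangle$ appear either as individual entries or in the sibling combination $\beta_Q = c_Q - c_{Q^s}$. Since $\Hilb^2 = -I$, the commutator is essentially self-adjoint, so by $L^p$--$L^{p'}$ duality we may assume $p \geq 2$ and hence $\alpha(p) = p$ throughout.

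For the sufficiency direction, assuming $b \in \bmo_p(\mu)$ and $\beta \in \ell^\infty$, the terms in the Haar expansion whose effective symbol coefficient is an individual $c_Q$ are controlled using an $L^p$-version of the Carleson embedding theorem, whose hypothesis is precisely captured by the $\bmo_p$ condition. The sibling balanced property of $\mu$ provides the comparability of $\mu(Q)$ and $\mu(Q^s)$ needed to transfer these estimates across siblings. The remaining cross-sibling terms have effective coefficient $\beta_Q$ and reduce to an $L^p$-bounded Haar multiplier scaled by $\|\beta\|_{\ell^\infty}$, giving boundedness of $[\Hilb, b]$ on $L^p(\mu)$.

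For the necessity direction, both conditions are extracted by carefully chosen testing. Pairing $[\Hilb, b] h_Q$ with a Haar-type function supported on $Q \cup Q^s$ isolates the quantity $c_Q - c_{Q^s}$ up to a controllable perturbation, forcing $\beta \in \ell^\infty$. Testing the commutator on normalized indicators $\1_Q/\mu(Q)^{1/p}$ and pairing against $p'$-normalized local oscillations exposes the $p$-oscillation of $b$ on $Q$, yielding $b \in \bmo_{\alpha(p)}(\mu)$. The strict inclusions $\BMO(\mu) \subsetneq [\BMO]_p(\mu) \subsetneq \bmo_p(\mu)$ are established by exhibiting two explicit examples on the unit interval equipped with a non-doubling sibling balanced measure, with the Haar coefficients of the symbol prescribed scale-by-scale to saturate exactly one of the two bracketing conditions while violating the other (e.g., $|c_Q|$ of borderline $\bmo_p$ size but $|\beta_Q|$ failing $\ell^\infty$, and vice versa).

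The principal obstacle will be the sharp exponent $\alpha(p) = \max(p, p')$ in the sufficiency direction: the bound supplied by Theorem A with the $\BMO$ hypothesis is strictly stronger than what we claim, and extracting the optimal $\bmo_p$ condition requires precise bookkeeping of which Carleson-type embedding is invoked at each piece of the paraproduct decomposition, together with a delicate use of the sibling balanced hypothesis (which is weaker than doubling and, unlike in classical proofs, cannot be supplemented by reverse H\"older inequalities). A secondary difficulty lies in the counterexamples establishing the strict inclusions: each must be compatible with sibling balancedness of $\mu$ while producing the required scaling mismatch between $\{c_Q\}$ and $\{\beta_Q\}$, which in the non-doubling setting requires a simultaneous construction of both the measure and the symbol.
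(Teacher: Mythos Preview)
Your overall strategy---paraproduct decomposition plus the self-adjointness $\Hilb^\ast=-\Hilb$---matches the paper's, but two concrete errors would derail the execution. First, the decomposition $bf=\Pi_b f+\Pi_b^\ast f+\Delta_b f$ is not one of the valid ones; the third term must be $\Lambda_b f=\sum_Q c_Q\langle f,h_Q\rangle h_Q$ (a genuine Haar multiplier), not $\Delta_b f=\sum_Q\Delta_Q b\,\Delta_Q f$. It is $\Lambda_b$ that yields the clean identity $[\Hilb,\Lambda_b]h_Q=\mathrm{sign}(Q)\,\beta_Q\, h_{Q^s}$ isolating $\beta$; with $\Delta_b$ you do not get this structure. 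Second, the sibling balanced condition is $m(Q)\sim m(Q^s)$, \emph{not} $\mu(Q)\sim\mu(Q^s)$; the ``comparability of $\mu(Q)$ and $\mu(Q^s)$'' you invoke is false in general and cannot be used to transfer Carleson estimates across siblings.

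More importantly, your sufficiency argument via direct Haar expansion and $L^p$-Carleson embeddings is far harder than necessary and is precisely where the proposal is vaguest. The paper bypasses all of this by invoking Treil's characterization (Theorem~\ref{L^p bound of paraprod}): $\Pi_b$ is bounded on $L^q(\mu)$ if and only if $b\in\bmo_q(\mu)$. Hence $b\in\bmo_{\alpha(p)}=\bmo_p\cap\bmo_{p'}$ makes both $\Pi_b$ and $\Pi_b^\ast$ bounded on $L^p$, so $[\Hilb,\Pi_b]$ and $[\Hilb,\Pi_b^\ast]$ are bounded trivially by composition; the remaining piece $[\Hilb,\Lambda_b]$ is a Haar shift with $\ell^\infty$ coefficients $\beta_Q$. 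For necessity the paper likewise uses the cited lower bound $\|b\|_{\bmo_p}\le\|[\Hilb,b]\|_{L^p\to L^p}$ together with self-adjointness (giving $b\in\bmo_{p'}$ as well), then subtracts the now-bounded paraproduct commutators to force $[\Hilb,\Lambda_b]$ bounded and hence $\beta\in\ell^\infty$ by testing on $h_Q$. No direct testing on indicators, and no sibling measure comparison, is needed anywhere in the characterization; the sibling balanced hypothesis enters only through the $L^p$-boundedness of $\Hilb$ itself.
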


This characterization is conceptually surprising: unlike the classical case where BMO characterizes commutator bounds uniformly in $p$, the nonhomogeneous setting exhibits a genuinely $p$-dependent hierarchy of symbol spaces. This suggests that nonhomogeneous harmonic analysis requires fundamentally new principles beyond classical intuition.

\begin{acorollary} \label{Main commutator symbols} 
Let \[B(\mu):= \{ b \in L_{\text{loc}}^2(\mu): \beta(b)= (\beta_Q(b))_Q \in \ell^\infty \};\]
\[[\BMO]_\infty(\mu):= \{b \in [\BMO]_2(\mu): \|[\Hilb,b]\|_{L^p(\mu) \to L^p(\mu)} < \infty \text{ for every } 1<p<\infty \}.\]
Then $\BMO(\mu) \subsetneq [\BMO]_\infty(\mu)$ and 
\[ 
[\BMO]_\infty(\mu)= B(\mu) \cap \bigcap_{p \geq 2} \bmo_p(\mu). 
\]  
\end{acorollary}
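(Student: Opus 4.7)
The plan is to reduce this corollary to Theorem~\ref{thm:D} via the tautology that $[\BMO]_\infty(\mu) = \bigcap_{1 < p < \infty} [\BMO]_p(\mu)$, which is immediate from the definitions (the $p = 2$ membership required in the definition of $[\BMO]_\infty(\mu)$ is subsumed by the intersection). Applying Theorem~\ref{thm:D} in each factor gives
\[
[\BMO]_\infty(\mu) = \bigcap_{1 < p < \infty} \{ b \in \bmo_{\alpha(p)}(\mu) : \beta(b) \in \ell^\infty \}.
\]
The condition $\beta(b) \in \ell^\infty$ is independent of $p$ and factors out of the intersection as $B(\mu)$. For the remaining $\bmo$-factor, $\alpha(p) = \max(p, p')$ sweeps out $[2, \infty)$ as $p$ ranges over $(1, \infty)$: the minimum value $2$ is achieved at $p = 2$, and every $q > 2$ is achieved both by $p = q$ and by $p = q'$. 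Consequently $\bigcap_{1 < p < \infty} \bmo_{\alpha(p)}(\mu) = \bigcap_{q \geq 2} \bmo_q(\mu)$, producing the asserted equality.

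The inclusion $\BMO(\mu) \subseteq [\BMO]_\infty(\mu)$ is essentially free: for $b \in \BMO(\mu)$, Theorem~\ref{thm:A} applied to $\Pi_b$, $\Pi_b^\ast$, and $\Delta_b$, combined with the Haar-shift decomposition of $\Hilb$ already used to prove Corollaries~\ref{cor:B}--\ref{cor:C}, yields boundedness of $[\Hilb, b]$ on $L^p(\mu)$ for every $1 < p < \infty$; simultaneously, the standard martingale BMO oscillation bounds give $\beta(b) \in \ell^\infty$ and $b \in \bmo_q(\mu)$ for all $q \geq 2$.

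The substantive content of the corollary is therefore strictness of the containment. My plan is to exhibit a single symbol $b \notin \BMO(\mu)$ that simultaneously satisfies $\beta(b) \in \ell^\infty$ and $b \in \bmo_q(\mu)$ for every $q \geq 2$, by promoting the $p$-specific counterexamples already constructed to establish $\BMO(\mu) \subsetneq [\BMO]_p(\mu)$ in Theorem~\ref{thm:D} to a universal one. For a suitable non-doubling $\mu$ on the unit interval, I would take $b$ as a telescoping sum of sibling-symmetric bump functions adapted to a tower of cubes on which $\mu$ witnesses its non-doubling behavior, scaling the amplitudes so that the $L^q$-oscillation stays bounded uniformly in both the cube and the exponent $q \geq 2$, while the $L^\infty$-oscillation diverges along a selected subfamily. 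The $\beta \in \ell^\infty$ condition is built into the construction via the sibling symmetry. The main obstacle is precisely the uniformity in $q$: a single bump of amplitude $a_k$ supported on a set of $\mu$-measure $\varepsilon_k$ contributes an $L^q$-oscillation of order $a_k \varepsilon_k^{1/q}$, and controlling this for every $q$ simultaneously forces an essentially $q$-independent calibration of amplitudes against support measures, which must be delicately matched with the non-doubling geometry so as to avoid accidentally producing a genuine $\BMO$ symbol.
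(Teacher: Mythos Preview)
Your reduction of the equality $[\BMO]_\infty(\mu) = B(\mu) \cap \bigcap_{q \ge 2} \bmo_q(\mu)$ to Theorem~\ref{thm:D} via $[\BMO]_\infty(\mu)=\bigcap_{1<p<\infty}[\BMO]_p(\mu)$ and the observation that $\alpha(p)$ sweeps $[2,\infty)$ is correct and is exactly the paper's argument (the discussion immediately preceding Corollary~\ref{commutator symbols}). The non-strict inclusion $\BMO(\mu)\subseteq[\BMO]_\infty(\mu)$ is likewise handled as in the paper.

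For the strict containment you are over-engineering. The paper's witness for $\BMO(\mu) \subsetneq [\BMO]_p(\mu)$ in Theorem~\ref{thm:D} is \emph{already} a single function $q$, independent of $p$: part~(2) of Theorem~\ref{THM: sharp containment} constructs one $q$ with $q\in\bmo_p(\mu)$ and $[\Hilb,q]$ bounded on $L^p(\mu)$ for \emph{every} $1<p<\infty$, while $\sup_I|\langle q\rangle_I-\langle q\rangle_{I^s}|=\infty$. Hence $q\in[\BMO]_\infty(\mu)\setminus\BMO(\mu)$ and the corollary follows with no further construction. Your plan to ``promote $p$-specific counterexamples to a universal one'' duplicates work already done inside the proof of Theorem~\ref{thm:D}. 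As for your proposed mechanism, note that the paper does \emph{not} use sibling symmetry; it uses alternating signs, arranging $\langle q\rangle_{I_k}-\langle q\rangle_{I_k^b}=(-1)^k\log k$ so that the oscillation is unbounded (hence $q\notin\BMO$) while consecutive terms cancel in $c_{I_k}(q)-c_{I_k^b}(q)\approx(-1)^k\log\bigl(k/(k+1)\bigr)=O(1)$, giving $\beta\in\ell^\infty$. A blanket sibling-symmetry constraint would tend to suppress the very oscillation needed to escape $\BMO$, so if you pursue your route you would have to impose it only on carefully selected scales, which you have not specified.
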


While these results address several questions in the nonhomogeneous setting, many related problems remain open. We will outline some of these at the end of the paper. 

\subsection*{Paper Organization}
The paper is organized as follows. In \cref{Section 2} we establish sparse domination for paraproducts and related operators, proving \cref{thm:A}. \cref{Section 2} also includes an explicit example where Lacey's packing condition is strictly stronger than martingale BMO. Section \ref{NewWeighted} recalls the correct framework to analyze Haar shifts and commutators in nonhomogeneous settings building on \cite{BBDPW, BCAPW}, and establishes \cref{cor:B}. The final section, \cref{Section 4}, provides the complete characterization of commutator symbols for the dyadic Hilbert transform, proving \cref{thm:D} and \cref{Main commutator symbols}.

\subsection*{Acknowledgments}
We would like to thank Jill Pipher and Jos\'e M. Conde Alonso for helpful discussions related to this work.

\section{Paraproducts and sparse domination } \label{Section 2}

Let \(\D\) be a dyadic grid in $\R^n$. In what follows, $\mu$ is a Borel measure on $\R^n$, $n\geq 1$, such that $0<\mu(Q)<\infty$ for every $Q\in \mathcal{D}$. We further assume that each quadrant has infinite measure. For any cube $Q \in \D$,  the dyadic expectation operator $\E_Q$ for a locally integrable function $f$ is
\[
\E_Q f(x) := \langle f \rangle_Q \1_Q(x)
\]
where $\langle f \rangle_Q = \frac{1}{\mu(Q)} \int_Q f(y) \, d\mu(y)$,  and the martingale difference operator $\Delta_Q$ is
\[
\Delta_Q f(x) :=\sum_{R \in \mathrm{ch}(Q)} \E_R f(x) - \E_Q f(x)=\sum_{R \in \mathrm{ch}(Q)} (\langle f \rangle_R - \langle f \rangle_Q) \1_R(x),
\]
where $\mathrm{ch}(Q)$ is the set of the $2^n$ dyadic children of $Q$. In what follows, given $Q \in \D$ we denote as $\widehat{Q}$ the dyadic parent of $Q$, i.e. the smallest cube in $\D$ that strictly contains $Q$.
 \begin{definition}
 Let $1 \leq p  < \infty$.
We say $b \in \mathrm{BMO_p}(\mu)$ if 
\begin{equation}\label{D: BMO}
\|b\|_{\BMO_p}:= \sup_{Q\in\D} \bigg( \frac{1}{\mu(Q)} \int_Q|b - \LL b \RR_{\widehat{Q}}|^p d\mu \bigg)^{\frac{1}{p}}< \infty.   
\end{equation}
\end{definition}
\begin{definition} Let $1 \leq p  < \infty$.
We say $b \in \mathrm{bmo}_p(\mu)$ if 
\begin{equation}\label{D: bmo}
\|b\|_{\bmo_p}:= \sup_{Q\in\D} \bigg( \frac{1}{\mu(Q)} \int_Q|b - \LL b \RR_{Q}|^p d\mu \bigg)^{\frac{1}{p}} < \infty.   
\end{equation}
\end{definition} 
Denote $\D(Q)=\{R \in \D: R \subseteq Q\}.$ As
\[(b-\ang{b}_Q)\1_Q(x)= \sum_{R \in \D(Q)} \Delta_Rb(x),\]
using orthogonality of martingale differences one can easily show that
$\|b\|_{\bmo_2}= \|b\|_{\mathcal{C}}$, where the latter is the Carleson norm
\[ \|b\|_{\mathcal{C}}=\sup_{Q \in \D} \bigg(\frac{1}{\mu(Q)} \sum_{R \in \D(Q)}\| \Delta_Rb\|_{L^2(\mu)}^2 \bigg)^{\frac{1}{2}}. \]
In general, if the measure is not dyadically doubling, we have $\BMO_p(\mu) \subsetneq \bmo_p(\mu)$, and these spaces coincide in the doubling setting. \par

Before introducing paraproducts, we record some known facts about $\BMO$ spaces in the martingale setting. The first is the celebrated John-Nirenberg inequality,  while the second is a direct characterization of $\BMO_p(\mu)$ for $1<p<\infty$. 
\begin{proposition}[John-Nirenberg inequality]
Suppose $b \in \BMO_p$ for some $1 \leq p<\infty$. Then $b \in \BMO_q$ for all $1 \leq q < \infty$, and moreover,
\begin{equation}
\|b\|_{\BMO_p} \sim_{p,q} \|b\|_{\BMO_q}
\label{eq:pequivalence}
\end{equation}
\end{proposition}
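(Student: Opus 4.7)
The direction $\|b\|_{\BMO_p} \le \|b\|_{\BMO_q}$ for $1 \le p \le q < \infty$ is an immediate application of Jensen's inequality inside the normalized average $\frac{1}{\mu(Q)}\int_Q \cdots \,d\mu$, so the real task is to promote $\BMO_1$ control to $\BMO_q$ control for each fixed $q > 1$. My plan is to establish the distributional John--Nirenberg-type bound
$$\mu\{x \in Q_0 : |b(x) - \langle b \rangle_{\widehat{Q_0}}| > \lambda\} \lesssim \exp\!\left(-\frac{c \lambda}{\|b\|_{\BMO_1}}\right) \mu(Q_0), \qquad Q_0 \in \D,\ \lambda > 0,$$
from which integrating the tail via the layer-cake formula produces $\|b\|_{\BMO_q} \lesssim_q \|b\|_{\BMO_1}$, completing the proposition.

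To prove the exponential tail estimate I would run an iterated Calder\'on--Zygmund-type stopping time inside $Q_0$. Writing $c_0 := \langle b \rangle_{\widehat{Q_0}}$ and $\lambda_0 := 2\|b\|_{\BMO_1}$, first select the maximal cubes $\{P_j\} \subsetneq Q_0$ with $|\langle b \rangle_{P_j} - c_0| > \lambda_0$. Note that the definition of $\BMO_1$ already forces $|\langle b \rangle_{Q_0} - c_0| \le \|b\|_{\BMO_1} < \lambda_0$, so the selection happens strictly below $Q_0$. Chebyshev combined with the $\BMO_1$ condition gives
$$\sum_j \mu(P_j) \le \frac{1}{\lambda_0} \int_{Q_0} |b - c_0|\, d\mu \le \tfrac{1}{2}\, \mu(Q_0);$$
by maximality, the martingale $(\langle b \rangle_R - c_0)_R$ stays bounded by $\lambda_0$ along non-stopped ancestors, so the atomless hypothesis combined with Lebesgue differentiation forces $|b - c_0| \le \lambda_0$ almost everywhere on $Q_0 \setminus \bigcup_j P_j$. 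Inside each $P_j$ I would iterate the procedure with new center $\langle b \rangle_{\widehat{P_j}}$, crucially using the uniform jump bound $|\langle b \rangle_{P_j} - \langle b \rangle_{\widehat{P_j}}| \le \|b\|_{\BMO_1}$, which is again immediate from the definition. After $k$ generations the stopped set has measure at most $2^{-k}\mu(Q_0)$ while $|b - c_0| \le k\lambda_0$ outside it, which gives the desired exponential decay with $c \sim 1/\|b\|_{\BMO_1}$.

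The main conceptual point, and the sole place where the non-doubling feature actually intervenes, is the control of inter-generational jumps $|\langle b \rangle_R - \langle b \rangle_{\widehat R}|$: in the doubling regime these are harmless via the standard $\bmo$ norm, but in general they can be arbitrarily large when oscillation is measured against a cube's own average. What rescues the argument here is precisely the parent-average choice in the definition of $\BMO_p$, which embeds the needed jump control directly into the norm and lets the stopping iteration close without any geometric doubling input. Once that observation is registered, the remainder of the proof is structurally identical to the classical martingale John--Nirenberg argument, and I expect no further obstacles.
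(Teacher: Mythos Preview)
The paper does not supply a proof of this proposition; it is stated as a known fact (``we record some known facts about $\BMO$ spaces in the martingale setting''), so there is no paper argument to compare against. Your proposed proof is the standard martingale John--Nirenberg argument and is correct: the key observation you single out---that the parent-centered definition of $\BMO_1$ automatically controls the one-step jumps $|\langle b\rangle_R-\langle b\rangle_{\widehat R}|$, so the iterated stopping time closes without any doubling input---is exactly why the result holds in this generality. One very minor remark: you do not need to invoke an atomless hypothesis to pass from $|\langle b\rangle_R-c_0|\le\lambda_0$ along non-stopped cubes to $|b-c_0|\le\lambda_0$ a.e.\ off the stopped set; the martingale convergence theorem for $L^1$ functions with respect to the dyadic filtration does this in general, and indeed the paper does not impose atomlessness at the point where this proposition is stated.
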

\begin{proposition}[\cite{TREIL2010}]
    For any $1 \leq p < \infty$ we have that $b \in \BMO_p$ if and only if the following properties hold: \begin{equation}\label{BMO prop 1} \int_{Q} \bigg( \sum_{R \in \D (Q)} | \Delta_Rb(x)|^2 \bigg)^{\frac{p}{2}} d\mu(x) \leq C \mu(Q), \quad \forall Q \in \D \end{equation} 
    \begin{equation} \label{BMO prop 2}
     \sup_{Q \in \D} \| \Delta_Qb \|_\infty < \infty.   
    \end{equation}
\end{proposition}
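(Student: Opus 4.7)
My plan is a classical martingale argument organized around the $L^p$-equivalence between a dyadic martingale and its square function (Burkholder-Davis-Gundy), combined with the elementary identity
$$(b-\langle b\rangle_{\widehat{Q}})\mathbf{1}_Q = \Delta_{\widehat{Q}} b \cdot \mathbf{1}_Q + \sum_{R \in \D(Q)} \Delta_R b,$$
which simply records the telescoping $\langle b \rangle_{\widehat Q} \to \langle b \rangle_Q \to \cdots \to b(x)$ pointwise $\mu$-a.e.\ via the dyadic Lebesgue differentiation theorem. Throughout, I use freely that by John-Nirenberg \eqref{eq:pequivalence} it suffices to treat any one exponent, and in particular the endpoint $p=1$ follows from the $p=2$ case by Cauchy-Schwarz; I focus on $1<p<\infty$.

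\emph{Forward direction.} Assume $b \in \BMO_p$. Property \eqref{BMO prop 2} is immediate: for $R \in \mathrm{ch}(Q)$, $\Delta_Q b$ takes the constant value $\langle b\rangle_R - \langle b\rangle_Q$ on $R$, and by Jensen applied with $\widehat R = Q$ this quantity is bounded by $\|b\|_{\BMO_p}$. For \eqref{BMO prop 1} I first upgrade the hypothesis to a bound with the average on $Q$ itself: writing $b-\langle b\rangle_Q = (b-\langle b\rangle_{\widehat Q}) + \Delta_{\widehat Q} b$ on $Q$ and using \eqref{BMO prop 2} just proved, I obtain $\|(b-\langle b\rangle_Q)\mathbf{1}_Q\|_{L^p(\mu)} \lesssim \|b\|_{\BMO_p} \mu(Q)^{1/p}$. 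Applying Burkholder-Davis-Gundy to the dyadic martingale $\sum_{R \in \D(Q)} \Delta_R b = (b-\langle b\rangle_Q)\mathbf{1}_Q$ then converts the $L^p$ norm of the function to the $L^p$ norm of the square function, which is exactly the left-hand side of \eqref{BMO prop 1}.

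\emph{Reverse direction.} Assume \eqref{BMO prop 1} and \eqref{BMO prop 2} hold with constants $C_1$ and $C_2$. From the identity above and the triangle inequality:
$$\left(\int_Q |b-\langle b\rangle_{\widehat Q}|^p\, d\mu\right)^{1/p} \leq C_2\, \mu(Q)^{1/p} + \left\|\sum_{R \in \D(Q)} \Delta_R b\right\|_{L^p(\mu)}.$$
A second application of Burkholder-Davis-Gundy converts the remaining term to the $L^p$ norm of the square function of $b$ restricted to $\D(Q)$, which is bounded by $C_1^{1/p}\mu(Q)^{1/p}$ thanks to \eqref{BMO prop 1}. Dividing by $\mu(Q)^{1/p}$ and taking a supremum over $Q \in \D$ yields the desired $\BMO_p$ estimate.

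\emph{Main obstacle.} The only nontrivial ingredient is the validity of the martingale square function inequality in the non-homogeneous setting. This is not a genuine issue here: $\D$ generates a bona fide filtration on the atomless Borel space $(\R^n,\mu)$, and Burkholder-Davis-Gundy applies in full generality for $1<p<\infty$ with constants depending only on $p$. The only delicate point is keeping track of which average ($\langle \cdot \rangle_Q$ vs.\ $\langle \cdot \rangle_{\widehat Q}$) appears at each step, which is precisely why \eqref{BMO prop 2} is needed as a separate axiom in the reverse direction rather than being absorbed into \eqref{BMO prop 1}.
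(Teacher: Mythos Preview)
The paper does not supply its own proof of this proposition; it is quoted from \cite{TREIL2010} and used as a black box. Your argument is the standard one and is correct for $1<p<\infty$: the identity $(b-\langle b\rangle_{\widehat Q})\mathbf 1_Q=\Delta_{\widehat Q}b\cdot\mathbf 1_Q+\sum_{R\in\D(Q)}\Delta_R b$ together with the general martingale square-function equivalence $\|\sum_{R\in\D(Q)}\Delta_R b\|_{L^p}\sim_p\|(\sum_{R\in\D(Q)}|\Delta_R b|^2)^{1/2}\|_{L^p}$ (valid for arbitrary $\sigma$-finite filtrations, no doubling required) gives both directions, and \eqref{BMO prop 2} is read off directly from the definition of $\BMO_p$ as you indicate.

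One small point: your treatment of $p=1$ is incomplete. John--Nirenberg makes the left-hand side $p$-independent, and Cauchy--Schwarz lets you pass from \eqref{BMO prop 1} at $p=2$ down to $p=1$, so the forward implication is fine. But for the reverse implication at $p=1$ you need to go from $\int_Q S_Q b\,d\mu\lesssim\mu(Q)$ to $\int_Q|b-\langle b\rangle_Q|\,d\mu\lesssim\mu(Q)$, and this does not follow from the $1<p<\infty$ case by H\"older in the available direction. It \emph{does} follow from Davis's inequality $\|f\|_{L^1}\le\|f^*\|_{L^1}\lesssim\|Sf\|_{L^1}$ applied to the martingale $\sum_{R\in\D(Q)}\Delta_R b$, so the gap is easily closed, but you should say so rather than fold it into the Cauchy--Schwarz remark.
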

Note that \eqref{BMO prop 2} follows from \eqref{BMO prop 1} in the doubling setting, while this is not true in the general setting. When $p=2$ \eqref{BMO prop 1} is the usual Carleson packing condition $$\sum_{R \in \D(Q)} \| \Delta_Rb\|^2_{L^2(\mu)} \leq C \mu(Q), \quad \forall Q \in \D.$$ 
Since $\BMO_p=\BMO_1$ for every $1 \leq p <\infty$, we see that \begin{gather} \label{BMO norm equiv} \|b\|_{\BMO} \sim \|b\|_{\mathcal{C}} + \sup_{Q \in \D} \| \Delta_Qb\|_\infty.\end{gather}
Now we are ready to introduce paraproduct forms. 
\begin{definition}
    Let $b,f \in L^1_{\mathrm{loc}}(\mu)$.
A dyadic paraproduct associated to a symbol $b$ is defined as 
$$\Pi_bf(x)= \sum_{Q \in \D} \E_Qf(x) \Delta_Qb(x). $$
The adjoint paraproduct is defined as
$$\Pi_b^\ast f(x)= \sum_{Q \in \D} \E_Q(b \Delta_Qf)(x)= \sum_{Q \in \D} \E_Q\big( \Delta_Qb\Delta_Qf)(x). $$
Define also the following operators
\begin{align*}
\Delta_b f(x) &= \sum_{Q \in \D} \Delta_Q b(x) \Delta_Q f(x),\\
\Lambda_b^0 f(x) & = \Pi_f b(x)= \sum_{Q \in \D} \E_Qb(x) \Delta_Q f(x),\\
\Lambda_b(f)(x) & = \sum_{Q \in \D} \Delta_Q(b \Delta_Qf)(x).
\end{align*}
\end{definition}
Finally, we have the paraproduct decompositions, see \cite{TREIL2010},
\[b(x)f(x)= \Pi_bf(x)+ \Pi_b^\ast f(x) + \Lambda_bf(x)= \Pi_bf(x)+ \Delta_b f(x) + \Lambda^0_bf(x).\]
These two decompositions coincide in the Lebesgue measure case, but are genuinely different in the nonhomogeneous case. In the same paper, the continuity on $L^p$ of paraproduct forms has been studied extensively for $1<p<\infty$. In particular, the necessary and sufficient conditions for the boundedness of $\Pi_b$ essentially depends on $p$.
\begin{theorem}[\cite{TREIL2010}] \label{L^p bound of paraprod}
A paraproduct $\Pi_b$ is bounded on $L^p$ for $1<p<\infty$ if and only if it is bounded on characteristic functions, i.e. if and only if the following holds:
\begin{equation} \label{testing for Pi_b}\sup_{Q \in \D} \frac{1}{\mu(Q)}\int_Q \bigg| \sum_{R \in \D(Q)} \Delta_Rb(x) \bigg|^pd \mu(x) < \infty.\end{equation}
Moreover $\Delta_b$ is bounded on $L^p$ for $1<p<\infty$ if and only if $b \in \BMO(\mu).$
\end{theorem}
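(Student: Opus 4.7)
The plan is to prove both equivalences by a common strategy: necessity via testing on indicator functions, and sufficiency via Burkholder's square-function identity combined with a stopping-time / Carleson embedding argument.

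For the first equivalence, I would start by computing explicitly
$$\Pi_b\1_Q(x) \;=\; \sum_{R\in \D(Q)} \Delta_R b(x) \;+\; \sum_{R\supsetneq Q} \tfrac{\mu(Q)}{\mu(R)}\Delta_R b(x),$$
and observing that each $\Delta_R b$ with $R\supsetneq Q$ is constant on $Q$, so the second sum collapses to a single constant $c_Q$ on $Q$. Hence $\int_Q |\Pi_b\1_Q - c_Q|^p\,d\mu = \int_Q |\sum_{R\in \D(Q)} \Delta_R b|^p\,d\mu$, and bounding the left-hand side by $2^p\|\Pi_b\|_{L^p\to L^p}^p\,\mu(Q)$ yields necessity of \eqref{testing for Pi_b}. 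For sufficiency, I would apply Burkholder--Davis--Gundy to the martingale $\Pi_b f=\sum_Q \langle f\rangle_Q \Delta_Q b$ to obtain
$$\|\Pi_b f\|_{L^p(\mu)} \;\sim\; \Big\|\big(\textstyle\sum_Q |\langle f\rangle_Q|^2\,|\Delta_Q b|^2\big)^{1/2}\Big\|_{L^p(\mu)},$$
and then run a principal-cube decomposition $\mathcal{F}$ of $f$, stopping whenever $|\langle f\rangle_R|$ roughly doubles relative to its last stopping ancestor. On each stopping region one has $|\langle f\rangle_Q|\sim |\langle f\rangle_F|$, so the local square-function integral is controlled by the testing constant times $\mu(F)$ via \eqref{testing for Pi_b}. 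Summing over $F$ yields $\sum_F |\langle f\rangle_F|^p\mu(F)$, which by sparseness of $\mathcal{F}$ and the $L^p$-boundedness of the dyadic maximal function is $\lesssim \|f\|_{L^p(\mu)}^p$.

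For the $\Delta_b$ characterization, I would use the identity
$$\Delta_b f \;=\; \Pi_b^\ast f \;+\; \sum_Q \Delta_Q\!\big(\Delta_Q b\cdot \Delta_Q f\big),$$
which follows from the pointwise relation $\Delta_Q b\cdot\Delta_Q f - \E_Q(\Delta_Q b\cdot\Delta_Q f) = \Delta_Q(\Delta_Q b\cdot\Delta_Q f)$, valid because $\Delta_Q b\cdot\Delta_Q f$ is $\mathrm{ch}(Q)$-measurable. The second summand, call it $M_b f$, is a sum of orthogonal martingale differences at distinct levels. When $b\in \BMO$, the John--Nirenberg equivalence \eqref{eq:pequivalence} together with \eqref{BMO prop 2} gives $\sup_Q\|\Delta_Q b\|_\infty \lesssim \|b\|_{\BMO}$, and a further application of Burkholder then yields $\|M_b f\|_{L^p(\mu)} \lesssim \|b\|_{\BMO}\|f\|_{L^p(\mu)}$. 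Since $b\in \BMO\subseteq\bmo_p$, the first part of the theorem (applied to $\Pi_b$ and then dualized) gives boundedness of $\Pi_b^\ast$ on $L^p(\mu)$, completing sufficiency. Necessity is obtained by testing $\Delta_b$ on indicators of dyadic cubes to recover both the Carleson packing condition and the uniform control of $\|\Delta_Q b\|_\infty$, which together amount to $b\in\BMO$ by \eqref{BMO norm equiv}.

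The main obstacle is the sufficiency step for $\Pi_b$: making the principal-cube stopping-time argument interact correctly with a non-doubling measure, so that the sparseness of $\mathcal{F}$ still yields maximal-function control and the square-function integral genuinely localizes on each stopping region despite the absence of doubling. Once this is handled, the same square-function reduction drives the $\Delta_b$ argument, with the pointwise bound $\|\Delta_Q b\|_\infty \lesssim \|b\|_{\BMO}$ obviating any further stopping-time construction there.
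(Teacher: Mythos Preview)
The paper does not provide its own proof of this theorem; it is stated with a citation to \cite{TREIL2010} and used as a black box. So there is no paper-proof to compare against, and your sketch must be judged on its own merits.

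Your overall strategy is the standard one and the necessity direction for $\Pi_b$ is fine (once you note that the constant $c_Q$ equals $\langle \Pi_b\mathbf 1_Q\rangle_Q$, since $\sum_{R\in\mathcal D(Q)}\Delta_R b$ has mean zero on $Q$, so $|c_Q|^p\mu(Q)\le \|\Pi_b\|^p\mu(Q)$). There are, however, two genuine gaps.

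First, in the sufficiency for $\Pi_b$: your passage ``summing over $F$ yields $\sum_F|\langle f\rangle_F|^p\mu(F)$'' implicitly uses the pointwise inequality $(\sum_F a_F)^{p/2}\le \sum_F a_F^{p/2}$, which is valid only for $p\le 2$. For $p>2$ the square function $(\sum_F|\langle f\rangle_F|^2 S_F^2)^{1/2}$ cannot be disassembled this way, and one needs an additional ingredient---for instance, exploiting the geometric growth of the stopping averages $|\langle f\rangle_F|$ together with a duality argument in $L^{p/2}$, or the specific technique Treil uses. As written, the argument proves the $1<p\le 2$ case only.

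Second, in the necessity for $\Delta_b$: testing $\Delta_b$ on indicators $\mathbf 1_Q$ does \emph{not} recover the Carleson packing condition or $\sup_Q\|\Delta_Q b\|_\infty$. Indeed $\Delta_R\mathbf 1_Q=0$ for all $R\subseteq Q$, so $\Delta_b\mathbf 1_Q$ is a sum over $R\supsetneq Q$ and is piecewise constant on the corona decomposition outside $Q$; restricted to $Q$ it is a single constant. A correct route is to test on Haar functions: since $\Delta_b h_Q=\Delta_Q b\cdot h_Q$, the ratio $\|\Delta_b h_Q\|_p/\|h_Q\|_p$ controls $\|\Delta_Q b\|_\infty$, and once $\sup_Q\|\Delta_Q b\|_\infty<\infty$ your decomposition $\Delta_b=\Pi_b^\ast+M_b$ gives $\Pi_b^\ast$ bounded on $L^p$, hence $b\in\mathrm{bmo}_{p'}$, and then $\mathrm{bmo}_{p'}$ together with $\sup_Q\|\Delta_Q b\|_\infty<\infty$ yields $b\in\BMO$ via \eqref{BMO norm equiv} and the $p$-independence of $\BMO_p$.
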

Note that condition \eqref{testing for Pi_b} coincides with $b \in \bmo_p(\mu)$, so we can rephrase it as 
\[ \Pi_b:L^p(\mu) \to L^p(\mu) \iff b \in \bmo_p(\mu).\]
The lack of John-Nirenberg inequality for $\bmo_p$ spaces explains why this condition depends on $p$.
Next, we recall some basic facts about sparse families. 
\begin{definition}

    Let $\mc{S} \subset \mc{D}$ be a family of dyadic cubes.
    \begin{enumerate}
        \item Let $0 < \eta < 1$. We say that $\mc{S}$ is $\eta$-sparse if for each $Q \in \mc{S}$, there exists some Borel set $E_Q \subset Q$ so that $\mu(E_Q) \geq \eta \, \mu(Q)$ and the collection $\{E_Q\}_{Q \in \mc{S}}$ is pairwise disjoint. 
        \item Let $\Lambda > 0$. We say that $\mc{S}$ is $\Lambda$-Carleson if for every sub-collection $\mc{S}' \subset \mc{S}$, we have 
        \[\sum_{Q \in \mc{S}'}\mu(Q) \leq \Lambda \,\mu\left(\bigcup_{Q \in \mc{S}'}Q\right).\]
    \end{enumerate}
\end{definition}

It was shown in \cite{Hanninen} that if the measure $\mu$ has no point masses then $\mc{S}$ is $\eta$-sparse if and only if $\mc{S}$ is $\eta^{-1}$-Carleson. See also \cite{lernernazarov}, \cite{Rey} and \cite{HonLor} for other proofs. \par Given a sparse family, a sparse operator is the positive operator defined as 
\[\mc{A}_\mc{S}f(x):= \sum_{Q \in \mc{S}} \E_Qf(x).\] 
The goal of this section is to prove the following. \begin{theorem}[Sparse domination for paraproducts and related operators] \label{thm:SparseDom} Let $\mu$ be an atomless Radon measure in $\R^n$ such that $0< \mu(Q)< \infty$ for every $Q \in \D$, and $b \in BMO$. Then any $T \in \{\Pi_b, \Pi^\ast_b, \Delta_b\}$ satisfies the following: for every $f \in L^1(\mu)$ compactly supported on $Q_0 \in \D$, there exists a dyadic sparse family $\mathcal{S}= \mathcal{S}(f)$ such that $$|Tf(x)| \lesssim \|b\|_\BMO \mathcal{A}_\mathcal{S}|f|(x), \quad \text{a.e. }x \in Q_0.$$
where the implicit constant depends on $T$, $n$.
\end{theorem}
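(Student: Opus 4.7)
My strategy is the standard principal-cube stopping-time construction: build a sparse family $\mathcal{S}$ jointly adapted to $f$ and $b$, then establish a pointwise bound on each stratum. I would reduce to $T=\Pi_b$ (the cases $\Pi_b^\ast$ and $\Delta_b$ are analogous, respectively by duality and by direct martingale orthogonality). Initialize $\mathcal{S}=\{Q_0\}$ and iteratively add, for each $F\in\mathcal{S}$, the maximal $Q\subsetneq F$ satisfying one of the following:
\begin{itemize}
\item[(a)] $\langle|f|\rangle_Q>2\langle|f|\rangle_F$ (\emph{$f$-stopping});
\item[(b)] $|\langle b\rangle_Q-\langle b\rangle_F|>K\|b\|_{\BMO}$ (\emph{$b$-stopping}), for a suitably large constant $K$.
\end{itemize}
Sparseness is routine: Markov controls the (a)-children by $\tfrac12\mu(F)$, while the John--Nirenberg inequality---available through the equivalence \eqref{eq:pequivalence}---controls the (b)-children by $Ce^{-cK}\mu(F)$, which is at most $\tfrac14\mu(F)$ for $K$ large.

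Decomposing $\Pi_b f=\sum_{F\in\mathcal{S}}\Pi_b^{[F]}f$ where $[F]:=\mathcal{D}(F)\setminus\bigcup_{F'\in\mathrm{ch}_{\mathcal{S}}(F)}\mathcal{D}(F')$, the result reduces to proving the local pointwise bound $|\Pi_b^{[F]}f(x)|\lesssim\|b\|_{\BMO}\langle|f|\rangle_F$ a.e.\ on $F$. For $x$ lying in a stopping child $F'\in\mathrm{ch}_{\mathcal{S}}(F)$, the chain of $[F]$-cubes containing $x$ is finite: $F=R_0\supsetneq\cdots\supsetneq R_m=\widehat{F'}$. Abel summation along this chain gives
$$
\Pi_b^{[F]}f(x)=\langle f\rangle_{R_m}\bigl(\langle b\rangle_{F'}-\langle b\rangle_F\bigr)-\sum_{j=1}^m\Delta_{R_{j-1}}f(x)\,B_j,
$$
with $B_j:=\langle b\rangle_{R_j}-\langle b\rangle_F$. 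The stopping controls supply $|B_j|\le K\|b\|_{\BMO}$ and $|\langle f\rangle_{R_j}|\le 2\langle|f|\rangle_F$, while the single-step $L^\infty$ bound $\|\Delta_{\widehat{F'}}b\|_\infty\le\|b\|_{\BMO}$ from \eqref{BMO norm equiv} controls the boundary jump $|\langle b\rangle_{F'}-\langle b\rangle_F|\le(K+1)\|b\|_{\BMO}$, yielding the desired estimate on $F'$.

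The main obstacle is the case $x\in E_F:=F\setminus\bigcup_{F'}F'$, where the $[F]$-chain containing $x$ is infinite, and summing absolute values $|\Delta_R b(x)|\le\|b\|_{\BMO}$ term by term diverges. My resolution exploits the martingale structure: the partial sums of $\Pi_b^{[F]}f$ along the dyadic filtration form a martingale whose $L^2$ norm is controlled via orthogonality,
$$
\|\Pi_b^{[F]}f\|_{L^2(F)}^2=\sum_{R\in[F]}|\langle f\rangle_R|^2\|\Delta_Rb\|_{L^2(\mu)}^2\lesssim\langle|f|\rangle_F^2\|b\|_{\BMO}^2\mu(F),
$$
using the $f$-stopping bound $|\langle f\rangle_R|\le 2\langle|f|\rangle_F$ together with the Carleson packing identity $\sum_{R\in\mathcal{D}(F)}\|\Delta_Rb\|_{L^2}^2=\|(b-\langle b\rangle_F)\mathbf{1}_F\|_{L^2}^2\le\|b\|_{\BMO}^2\mu(F)$. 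Doob's maximal inequality then yields pointwise control on $E_F$ except on a dyadic set of measure $\le\tfrac14\mu(F)$ (the maximal function of a $\mathcal{D}$-adapted martingale is $\mathcal{D}$-adapted), which is absorbed into $\mathcal{S}$ as a third, $P_\ast$-stopping condition preserving sparseness by Chebyshev. The essential use of the full $\BMO$ hypothesis---beyond Lacey's stronger Carleson-type packing---appears in the combination of \emph{both} ingredients of \eqref{BMO norm equiv}: the Carleson packing of $\Delta b$ supplies the $L^2$ orthogonality estimate on $E_F$, while the $L^\infty$ bound on individual $\Delta_Q b$ absorbs the single-step boundary jumps across stopping children $F'$.
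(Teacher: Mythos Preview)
Your Abel-summation step for $x\in F'$ has a genuine gap. The identity you write is correct, but the bound you claim does not follow: knowing $|B_j|\le K\|b\|_{\BMO}$ and $|\langle f\rangle_{R_j}|\le 2\langle|f|\rangle_F$ gives only $|\Delta_{R_{j-1}}f(x)|\le 4\langle|f|\rangle_F$ for each $j$, so the sum $\sum_{j=1}^m \Delta_{R_{j-1}}f(x)\,B_j$ can be of order $m\,\langle|f|\rangle_F\,\|b\|_{\BMO}$, with no control on $m$. (Let $\langle b\rangle_{R_j}$ oscillate between $\pm K\|b\|_{\BMO}/2$ and let $\langle f\rangle_{R_j}$ oscillate in phase with the differences; both are consistent with your stopping rules.) The $b$-stopping controls the \emph{range} of $\langle b\rangle_{R_j}$ but not its \emph{variation}, which is what the Abel sum sees. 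Consequently your third ``$P_\ast$-stopping'' condition is needed for $x\in F'$ as well, not only on $E_F$. Once you impose it everywhere, the Abel summation and the $b$-stopping become redundant, and you are doing exactly what the paper does: stop on $\langle|f|\rangle_Q>C_1\langle|f|\rangle_F$ \emph{and} on the maximal partial sum $\bigl|\sum_{Q\subsetneq R\subseteq F}T_Rf\bigr|>C_2\langle|f|\rangle_F$. The paper controls the measure of the second family by first establishing that $T^\#$ is of weak type $(1,1)$ via the nonhomogeneous Calder\'on--Zygmund decomposition; your $L^2$/Doob/Chebyshev route would also work here, but the core mechanism is the same.

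Separately, your reduction to $T=\Pi_b$ is too quick: duality yields only a bilinear sparse form for $\Pi_b^\ast$, not a pointwise one, and ``direct martingale orthogonality'' for $\Delta_b$ gives $L^2$ control, not pointwise. The paper instead treats all three operators uniformly by proving the weak-$(1,1)$ bound for each $T^\#$ directly and then running a single stopping-time argument; the only operator-specific work is bounding the single boundary term $T_{\widehat{F'}}f(x)$ on $F'$, which in every case reduces to the two ingredients of martingale $\BMO$: the $L^\infty$ bound $\|\Delta_{\widehat{F'}}b\|_\infty\le\|b\|_{\BMO}$ and the elementary estimate $\|\Delta_Q f\|_{L^1(\mu)}\le 2\int_Q|f|\,d\mu$.
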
 
Before giving the proof, we provide some motivation. In the homogeneous case, pointwise sparse domination for paraproducts with symbol $b \in \BMO$ was proved in \cite{NPTV} and a similar proof appeared in \cite{Lacey_2017} in the non-homogeneous setting as long as the symbol $b$ satisfies the following packing condition:
\begin{equation} \label{E: lacey packing} \sum_{Q \in \D(Q_0)} \| \Delta_Qb\|^2_{\infty} \mu(Q) < \mu(Q_0); \quad \forall Q_0 \in \D.\end{equation}
While Carleson norm and \eqref{E: lacey packing} are equivalent if $\mu$ is doubling, and both conditions coincide with requiring $b \in BMO$, the second is stronger than the first if the measure is not doubling, as
\begin{gather*}
    \mu(Q) \|\Delta_Qb\|_\infty^2=\mu(Q) \max_{R \in \mathrm{ch}(Q)}| \langle f \rangle_R- \langle f \rangle_Q|^2 \geq \sum_{R\in \mathrm{ch}(Q)} | \langle f \rangle_R- \langle f \rangle_Q|^2 \mu(R)=\| \Delta_Q b\|^2_{L^2(\mu)}.
\end{gather*}
Moreover, we also see using \eqref{BMO norm equiv} that \eqref{E: lacey packing} is in general stronger than the condition $b \in \BMO.$ 
In particular, we give an explicit example of a measure $\mu$ and a symbol $b \in \BMO$ that does not satisfy \eqref{E: lacey packing}. We use an example of a non-doubling Borel measure $\mu$ via a dyadic construction originally due to \cite{LSMP}; see also \cite[Proposition 2.1]{CPW}. For $k \in \N$, let $I_k=[0, 2^{-k})$ and $I_k^b=[2^{-k}, 2^{-k+1})$ denote its dyadic sibling.  Let $\mu$ be uniform with density $1$ (i.e. the Lebesgue density) on $[0,1)^c$, while on the unit interval $[0,1)$ we define $\mu$ inductively with constant density on $I_k^b, k \geq 1$ according to the rules
$$ \mu(I_1)=\mu(I_1^b)=\frac{1}{2};$$
$$ \mu(I_k)= \left(\frac{k-1}{k}\right) \mu(I_{k-1}), \quad \mu(I_k^b)= \frac{1}{k} \mu(I_{k-1}), \quad k \geq 2.$$
Straightforward computations give
$$\mu(I_k) \sim \frac{1}{k}; \quad  \mu(I_k^b) \sim \frac{1}{k^2}; \quad \|h_{I_k}\|_{\infty} \sim k, \quad k \geq 1.$$
It is also easy to check that $\mu$ is atomless. 

 \begin{proposition} \label{prop: BMOStrictInLaceyPack}
Let $\mu$ be the Borel measure constructed above, and define
 $$b(x)= \sum_{k=1}^{\infty} \alpha_k h_{I_k}(x), \quad \alpha_k:= k^{-1/2} \mu(I_k)^{1/2}.$$ Then $b \in \BMO$, but 
 $$\sum_{k=1}^{\infty} \| \Delta_{I_k}b\|^2_{\infty} \mu(I_k)=+\infty.$$
 \begin{proof}
 We first show $\|b\|_{\mathcal{C}}< \infty.$ It suffices to verify the Carleson packing condition for intervals of the form $I_k$ only. Note that $\Delta_{I_k}b= \alpha_{I_k}h_{I_k}$, so $\|\Delta_{I_k}b\|_{L^2(\mu)}^2=\alpha_k^2=  \frac{\mu(I_k)}{k} \sim k^{-2}$, and $\Delta_J b=0$ if $J \neq I_k$ for some $k$. Fix a positive integer $k_0$, and observe
 \begin{align*} \sum_{I \subseteq I_{k_0}} \|\Delta_I b\|_{L^2(\mu)}^2 & = \sum_{k=k_0}^{\infty} k^{-1}\mu(I_k) \\
 &  \sim \sum_{k=k_0}^{\infty} k^{-2} \\
 & \sim \frac{1}{k_0} \sim \mu(I_{k_0}).
 \end{align*}
 On the other hand, for $k \in \Z_{+}$, $\|\Delta_{I_k} b \|_{\infty} \sim k \, \alpha_k \sim 1.$ This establishes $b \in \BMO$, but also $$\sum_{k=1}^{\infty} \| \Delta_{I_k}b\|^2_{\infty} \, \mu(I_k) \gtrsim \sum_{k=1}^{\infty} \frac{1}{k}=+\infty.$$
 \end{proof}
 \end{proposition}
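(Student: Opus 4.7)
The plan is to invoke the characterization from \eqref{BMO norm equiv}, $\|b\|_\BMO \sim \|b\|_\mathcal{C} + \sup_{Q \in \D}\|\Delta_Q b\|_\infty$, which reduces the whole proposition to a couple of concrete scalar estimates together with one divergent series. The core observation is that since $\{h_I\}_{I\in\D}$ is orthonormal in $L^2(\mu)$, the martingale differences of $b$ land exactly where $b$ has Haar coefficients: $\Delta_{I_k}b = \alpha_k h_{I_k}$ for each $k\geq 1$, and $\Delta_Q b = 0$ for every other $Q\in\D$. Plugging in the stated sizes $\mu(I_k)\sim 1/k$ and $\|h_{I_k}\|_\infty \sim k$, together with $\alpha_k = k^{-1/2}\mu(I_k)^{1/2}\sim 1/k$, yields the two identities on which everything hinges:
\[
\|\Delta_{I_k}b\|_{L^2(\mu)}^2 = \alpha_k^2 \sim \frac{1}{k^2}, \qquad \|\Delta_{I_k}b\|_\infty = \alpha_k\,\|h_{I_k}\|_\infty \sim 1.
\]

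The uniform $L^\infty$ bound on martingale differences is then immediate. To verify $\|b\|_\mathcal{C}<\infty$, fix any $Q_0\in\D$; only descendants of the form $I_k$ contribute. I would split into three cases: if $Q_0 = I_{k_0}$ with $k_0\geq 1$, then $\sum_{k\geq k_0}\alpha_k^2 \sim \sum_{k\geq k_0} k^{-2} \sim 1/k_0 \sim \mu(I_{k_0})$, which is sharp; if $Q_0$ contains the unit interval $[0,1)$, then $\sum_{k\geq 1}\alpha_k^2$ is a bounded constant, controlled by $\mu(Q_0)\geq 1$; in every other case $Q_0$ contains no $I_k$ and the sum vanishes. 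Combined with the $L^\infty$ bound, this gives $b\in\BMO$.

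The failure of Lacey's condition \eqref{E: lacey packing} follows from the same arithmetic by taking $Q_0 = [0,1)$:
\[
\sum_{Q\in\D([0,1))}\|\Delta_Q b\|_\infty^2\,\mu(Q) = \sum_{k=1}^{\infty}\|\Delta_{I_k}b\|_\infty^2\,\mu(I_k) \sim \sum_{k=1}^{\infty}\frac{1}{k} = +\infty.
\]

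There is no substantial technical obstacle; the entire argument rests on the calibration of $\alpha_k$. The factor $\mu(I_k)^{1/2}$ keeps the $L^2$-mass of each Haar term summable, which is what Carleson demands, while the additional $k^{-1/2}$ is the sharp threshold that forces $\|\Delta_{I_k}b\|_\infty\sim 1$: any faster decay in $\alpha_k$ would already make $\sum_k\|\Delta_{I_k}b\|_\infty^2\mu(I_k)$ finite, while any slower decay would push $b$ out of $\BMO$. The non-doubling nature of $\mu$ enters precisely through $\|h_{I_k}\|_\infty\sim k\to\infty$ with $\|h_{I_k}\|_{L^2(\mu)}=1$, which is exactly the gap between the two packing conditions one is trying to exploit.
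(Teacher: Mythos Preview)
Your proof is correct and follows essentially the same approach as the paper: both identify $\Delta_{I_k}b=\alpha_k h_{I_k}$, compute $\|\Delta_{I_k}b\|_{L^2(\mu)}^2\sim k^{-2}$ and $\|\Delta_{I_k}b\|_\infty\sim 1$, verify the Carleson condition via $\sum_{k\geq k_0}k^{-2}\sim 1/k_0\sim\mu(I_{k_0})$, and conclude the divergence from the harmonic series. Your explicit three-case split for the Carleson condition (cubes $I_{k_0}$, cubes containing $[0,1)$, and cubes containing no $I_k$) is slightly more thorough than the paper's one-line reduction, but the substance is identical.
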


We now show that the assumption on the symbol $b$ for sparse domination of the paraproduct $\Pi_b$ can in fact be relaxed to $b \in \BMO$.

\begin{lemma} \label{lemma for sparse}
For any $f \in L^1_{\mathrm{loc}}(\mu)$ and any dyadic cube $Q \in \D$, the following bound holds:
\[
\|\Delta_Q f\|_{L^1(\mu)} \le 2 \int_Q |f(x)| \, d\mu(x).
\]
Moreover, we have
\[
\left| \E_{Q}\big( \Delta_{Q}b\Delta_{Q}f \big)(x)\right| \leq 2 \|b\|_\BMO\langle |f| \rangle_{Q}.
\]
\end{lemma}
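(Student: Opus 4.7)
Both inequalities should follow essentially from the triangle inequality and the defining property of $\BMO$. I will attack the two parts in order.

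For the first inequality, the plan is to split
\[
\Delta_Q f = \sum_{R \in \mathrm{ch}(Q)} \E_R f \;-\; \E_Q f
\]
and take $L^1$ norms. Each piece $\E_R f = \langle f \rangle_R \1_R$ contributes $|\langle f\rangle_R|\,\mu(R) \le \int_R |f|\,d\mu$, and summing over the disjoint children $R \in \mathrm{ch}(Q)$ gives at most $\int_Q |f|\,d\mu$. The remaining term $\E_Q f \1_Q$ contributes exactly $|\langle f\rangle_Q|\,\mu(Q) \le \int_Q |f|\,d\mu$. Adding the two bounds yields the factor $2$.

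For the second inequality, observe that $\Delta_Q b$ is constant on each child $R \in \mathrm{ch}(Q)$, so it suffices to control $\|\Delta_Q b\|_\infty$. Since $\widehat{R} = Q$ for $R \in \mathrm{ch}(Q)$, for any such $R$ we have
\[
|\langle b \rangle_R - \langle b \rangle_Q| \le \frac{1}{\mu(R)} \int_R |b - \langle b \rangle_{\widehat{R}}|\,d\mu \le \|b\|_{\BMO},
\]
which gives $\|\Delta_Q b\|_\infty \le \|b\|_{\BMO}$. Then, writing
\[
\E_Q(\Delta_Q b\,\Delta_Q f)(x) = \frac{\1_Q(x)}{\mu(Q)} \int_Q \Delta_Q b(y)\,\Delta_Q f(y)\,d\mu(y),
\]
I pull out $\|\Delta_Q b\|_\infty$ and invoke the first part of the lemma to bound $\|\Delta_Q f\|_{L^1(\mu)} \le 2\int_Q |f|\,d\mu$. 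This produces exactly $2\|b\|_{\BMO} \langle |f| \rangle_Q$ after dividing by $\mu(Q)$.

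There is no real obstacle here; the only subtlety to flag is the use of the definition of $\BMO$ with averages $\langle b \rangle_{\widehat{R}}$ (rather than $\langle b \rangle_R$) to extract the sharp pointwise bound on $\Delta_Q b$ without losing a doubling-type constant — this is precisely why $\BMO$ (not $\bmo$) is the right space for the argument to go through in the non-homogeneous setting.
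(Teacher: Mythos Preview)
Your proof is correct and essentially identical to the paper's own argument. The only cosmetic difference is in part one: you apply the triangle inequality at the level of $\Delta_Q f = \sum_R \E_R f - \E_Q f$ and then bound each piece separately, whereas the paper first writes $\|\Delta_Q f\|_{L^1(\mu)} = \sum_{R \in \mathrm{ch}(Q)} \mu(R)\,|\langle f\rangle_R - \langle f\rangle_Q|$ and applies the triangle inequality term by term; both routes yield the same $2\int_Q |f|\,d\mu$, and part two is handled identically in both.
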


\begin{proof}
As the children of $Q$ are disjoint we have
\begin{align*}
    \|\Delta_Q f\|_{L^1(\mu)} &= \int_Q \left| \sum_{R \in \mathrm{ch}(Q)} (\langle f \rangle_R - \langle f \rangle_Q) \1_R(x) \right| d\mu(x) = \sum_{R \in \mathrm{ch}(Q)} \mu(R) |\langle f \rangle_R - \langle f \rangle_Q|.
\end{align*}
Using the triangle inequality we get:
\begin{align*}
    \mu(R) |\langle f \rangle_R - \langle f \rangle_Q| \le \mu(R) (|\langle f \rangle_R| + |\langle f \rangle_Q|) \leq \int_R |f| \, d\mu + \mu(R) \langle|f|\rangle_Q.
\end{align*}
Summing over $R \in \mathrm{ch}(Q)$ completes the first part. Also, by Hölder's inequality and $b \in \BMO$
\begin{align*}
    \left| \E_{Q}\big( \Delta_{Q}b  \Delta_{Q}f \big)(x) \right|= \frac{1}{\mu(Q)} \left| \int_{Q} \Delta_{Q}b(y) \Delta_{Q}f(y) \, d\mu(y) \right| \leq \frac{\|b\|_\BMO\|\Delta_{Q} f\|_{L^1(\mu)}}{\mu(Q)} \leq 2 \|b\|_\BMO \langle |f| \rangle_{Q}.
    \end{align*}
\end{proof}
We now introduce the nonhomogeneous Calderón-Zygmund decomposition. 
\begin{lemma}\cite{CPW} \label{L: CZD}
Let $f:\R^n\rightarrow \R$ with $f \in L^1(\mu)$ supported in $Q_0 \in \D$. Then, for every $\lambda>0$ there exist functions $g,b$ such that $f=g+b$ and the following holds
\begin{enumerate}
    \item There exists a family of pairwise disjoint intervals $\{Q_k\}_k \subset \D(Q_0)$ such that \[b= \sum_{k \in \N} b_k; \quad \quad b_k= f\1_{Q_k}- \LL f\1_{Q_k}\RR_{\widehat{Q_k}}\1_{\widehat{Q_k}}.\]
    In particular, for every $k$, $\|b_k\|_{L^1(\mu)} \lesssim \int_{Q_k} |f| d\mu$ and $b_k$ has zero mean on $\widehat{Q_k}$.  
    \item We have that $g \in L^p(\mu)$ for every $1 \leq p < \infty $ and $\|g\|^p_{L^p(\mu)} \lesssim_p \lambda^{p-1}\|f\|_{L^1(\mu)}$. Moreover, $g \in \BMO(\mu)$ and $\|g\|_{\BMO} \leq \lambda.$
\end{enumerate}
\end{lemma}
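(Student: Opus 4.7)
My plan is to adapt the classical Calderón--Zygmund stopping-time construction to the non-doubling setting. The key modification, forced by the lack of doubling, is that each bad atom $b_k$ is normalized on the dyadic parent $\widehat{Q_k}$ rather than on $Q_k$ itself, since only the parent's average is a priori controlled at the stopping level $\lambda$.

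First, I would choose the stopping family. Assume $\langle|f|\rangle_{Q_0}\leq\lambda$ (without loss of generality, the other case being vacuous). Let $\{Q_k\}_k\subset\D(Q_0)$ be the maximal dyadic cubes with $\langle|f|\rangle_{Q_k}>\lambda$; by maximality these are pairwise disjoint, and every parent satisfies $\langle|f|\rangle_{\widehat{Q_k}}\leq\lambda$. Setting
\[b_k := f\1_{Q_k}-\langle f\1_{Q_k}\rangle_{\widehat{Q_k}}\1_{\widehat{Q_k}}, \qquad b:=\sum_k b_k, \qquad g:=f-b,\]
a direct calculation gives $\int_{\widehat{Q_k}}b_k\,d\mu=0$, while the triangle inequality combined with $|\langle f\1_{Q_k}\rangle_{\widehat{Q_k}}|\mu(\widehat{Q_k})\leq\int_{Q_k}|f|\,d\mu$ yields $\|b_k\|_{L^1(\mu)}\leq 2\int_{Q_k}|f|\,d\mu$. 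This proves (1); summing over disjoint $Q_k$'s gives $\|b\|_{L^1(\mu)}\lesssim\|f\|_{L^1(\mu)}$, hence $\|g\|_{L^1(\mu)}\lesssim\|f\|_{L^1(\mu)}$.

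Next I would establish (2) starting from the rearrangement
\[g \;=\; f\1_{(\bigcup_k Q_k)^c}\;+\;\sum_k \langle f\1_{Q_k}\rangle_{\widehat{Q_k}}\1_{\widehat{Q_k}}.\]
Since $\mu$ is atomless, the dyadic Lebesgue differentiation theorem gives $|f|\leq\lambda$ a.e.\ on $(\bigcup_k Q_k)^c$, so the first piece contributes at most $\lambda^{p-1}\|f\|_{L^1(\mu)}$ to $\int|g|^p\,d\mu$. For the $\BMO$ bound, for each $R\in\D$ I would compute $\frac{1}{\mu(R)}\int_R |g-\langle g\rangle_{\widehat R}|\,d\mu$, observing that the contributions of parents $\widehat{Q_k}$ strictly containing $\widehat R$ telescope away in the difference $g-\langle g\rangle_{\widehat R}$ and the remaining local contributions are each bounded by $\lambda$, so $\|g\|_{\BMO}\leq\lambda$. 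For the second piece of the $L^p$ bound, I would argue by duality: for $\|h\|_{L^{p'}}\leq 1$, using $\langle|h|\rangle_{\widehat{Q_k}}\leq M_\D h(x)$ for any $x\in Q_k$ and the disjointness of $\{Q_k\}$,
\[\sum_k|\langle f\1_{Q_k}\rangle_{\widehat{Q_k}}|\int_{\widehat{Q_k}}|h|\,d\mu\;\leq\;\sum_k\int_{Q_k}|f|\,d\mu\cdot\langle|h|\rangle_{\widehat{Q_k}}\;\leq\;\int|f|\,M_\D h\,d\mu,\]
where $M_\D$ is the dyadic maximal function. A Hölder estimate together with the $L^{p'}$ boundedness of $M_\D$, combined with the level-set bookkeeping $\int_{Q_k}|f|\,d\mu\leq\lambda\mu(\widehat{Q_k})$ and $\sum_k\int_{Q_k}|f|\,d\mu\leq\|f\|_{L^1(\mu)}$, yields the desired estimate $\|g\|_{L^p(\mu)}^p\lesssim_p\lambda^{p-1}\|f\|_{L^1(\mu)}$.

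The principal obstacle is the $L^p$ estimate on the second piece of $g$. In the doubling setting one simply interpolates between the elementary $L^1$ and $L^\infty$ bounds, but in the non-doubling case the parents $\widehat{Q_k}$ may form nontrivial nested chains (a child of $\widehat{Q_j}$ other than $Q_j$ can itself contain another parent $\widehat{Q_k}$, etcetera), so $\sum_k\langle f\1_{Q_k}\rangle_{\widehat{Q_k}}\1_{\widehat{Q_k}}$ is \emph{not} pointwise bounded by any multiple of $\lambda$ and the classical interpolation is unavailable. The duality/maximal-function argument outlined above is the structural substitute for interpolation and is the step where the construction departs most visibly from the doubling case.
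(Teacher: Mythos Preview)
The paper does not supply its own proof of this lemma; it is quoted from \cite{CPW}. So there is no in-paper argument to compare against, and your proposal must stand on its own.

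Your construction of the stopping family and the verification of part (1) are correct and standard. The sketch of the $\BMO$ bound is plausible and is indeed the usual telescoping argument.

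The gap is in the $L^p$ estimate for the second piece $g_2=\sum_k\langle f\1_{Q_k}\rangle_{\widehat{Q_k}}\1_{\widehat{Q_k}}$. Your duality chain correctly produces
\[
\Big|\int g_2\,h\Big|\;\le\;\sum_k a_k\,\langle|h|\rangle_{\widehat{Q_k}}\;\le\;\int_{\cup_k Q_k}|f|\,M_\D h\,d\mu,
\]
but the ``H\"older plus level-set bookkeeping'' you invoke does not close. A direct H\"older gives only $\|f\|_{L^p}\|M_\D h\|_{L^{p'}}$, which is the wrong quantity. The natural refinement---splitting $a_k=a_k^{1/p}a_k^{1/p'}$, using $a_k\le\lambda\mu(\widehat{Q_k})$ on one factor and $\sum_k a_k\le\|f\|_1$ on the other---reduces matters to the embedding $\sum_k\mu(\widehat{Q_k})\langle|h|\rangle_{\widehat{Q_k}}^{p'}\lesssim\|h\|_{p'}^{p'}$, which would require the family $\{\widehat{Q_k}\}$ to be Carleson. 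It is not: as you yourself observe, the parents can form arbitrarily long nested chains (e.g.\ with $\mu([0,2^{-k}))=1/(k+1)$ and $Q_k=[2^{-k},2^{-k+1})$ selected for $k=1,\dots,K$, one gets $\sum_{P\subset Q_0}\mu(P)\sim\log K$ against $\mu(Q_0)=1$). Routing instead through $\inf_{Q_k}M_\D h$ and the disjointness of the $Q_k$ introduces the ratio $\mu(\widehat{Q_k})/\mu(Q_k)$, which is exactly the unbounded doubling constant. So the maximal-function substitute you propose for interpolation does not deliver $\|g_2\|_p^p\lesssim\lambda^{p-1}\|f\|_1$ as written; a different mechanism (exploiting cancellation in the nested sum, or a sharper stopping-time/John--Nirenberg argument localized to the support of the selected cubes rather than all of $Q_0$) is needed to obtain the stated bound.
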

\begin{definition}
    Let $T=\sum_{Q \in \D}T_Q$ be a dyadic operator. The maximal truncation of $T$ is 
$$T^\#f(x):= \sup_{Q_0 \ni x} \bigg| \sum_{Q_0 \subsetneq Q} T_Qf(x) \bigg|,$$
where the supremum is taken over  $Q_0 \in \D$.
\end{definition}
To prove sparse domination, we need to control maximal truncations of paraproducts. 
\begin{proposition} \label{weak (1,1) max. trunc}
    Let $b \in \BMO$ and $T \in \{ \Pi_b, \Pi_b^*, \Delta_b\}.$ Then for every $1<p<\infty$
    \[ \|T^\# \|_{L^p(\mu) \to L^p(\mu)} \lesssim   \|b\|_\BMO.\] 
   and  \[\|T^\#\|_{L^1(\mu) \to L^{1,\infty}(\mu)} \lesssim \|b\|_\BMO.\]
\end{proposition}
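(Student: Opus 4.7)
The plan is to establish a Cotlar-type pointwise inequality, use it to deduce the $L^p$ bounds for $p>1$, and close out the weak endpoint via the non-homogeneous Calder\'on--Zygmund decomposition of \cref{L: CZD}.

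For the Cotlar step, fix $x\in Q_0\in \D$ and split $Tf = T_1^{Q_0}f + T_2^{Q_0}f$, where the two pieces collect the $Q\subseteq Q_0$ and $Q\supsetneq Q_0$ summands respectively. The structural observation driving everything is that, for each $T\in\{\Pi_b,\Pi_b^{\ast},\Delta_b\}$ and each $Q\supsetneq Q_0$, the summand $T_Qf$ is \emph{constant on $Q_0$}: the cube $Q_0$ sits inside a unique child of $Q$, so $\Delta_Qb$ and $\Delta_Qf$ are constant on $Q_0$ and the averages $\langle f\rangle_Q$, $\langle \Delta_Qb\,\Delta_Qf\rangle_Q$ are scalars. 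Averaging over $Q_0$ then yields
$$|T_2^{Q_0}f(x)| = |\langle T_2^{Q_0}f\rangle_{Q_0}| \leq \langle |Tf|\rangle_{Q_0} + \langle |T_1^{Q_0}f|\rangle_{Q_0}.$$
The remaining key point is that $T_1^{Q_0}f|_{Q_0}$ depends only on $f\1_{Q_0}$ and coincides with the corresponding paraproduct on the local dyadic system $(\D(Q_0),\mu|_{Q_0})$ with symbol $b|_{Q_0}$. Since $b\in\BMO=\BMO_p\subset\bmo_p$ for all $p>1$ by \eqref{pequivalence} together with the inequality $\|b\|_{\bmo_p}\lesssim \|b\|_{\BMO_p}$, \cref{L^p bound of paraprod} applied locally gives $\|T_1^{Q_0}(f\1_{Q_0})\|_{L^p(\mu)}\lesssim_p \|b\|_{\BMO}\|f\1_{Q_0}\|_{L^p(\mu)}$. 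H\"older's inequality followed by a supremum over $Q_0\ni x$ then delivers the Cotlar bound
$$T^{\#}f(x) \lesssim_p M_d(Tf)(x) + \|b\|_{\BMO}\bigl(M_d(|f|^p)(x)\bigr)^{1/p},\qquad p>1,$$
where $M_d$ is the dyadic maximal operator.

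The $L^q$ bound for $q>1$ is immediate: choose $p\in(1,q)$ in the Cotlar bound and combine the $L^q$-boundedness of the two maximal operators with $\|Tf\|_{L^q}\lesssim\|b\|_{\BMO}\|f\|_{L^q}$, which is again \cref{L^p bound of paraprod}. For the weak-type endpoint we may rescale so that $\|b\|_{\BMO}=1$ and apply \cref{L: CZD} at level $\lambda$ to write $f=g+\sum_k b_k$. The good part is handled by the $L^2$ estimate just proven together with $\|g\|_{L^2}^2\lesssim \lambda\|f\|_{L^1}$, producing $\mu\{T^{\#}g>\lambda/2\}\lesssim \|f\|_{L^1}/\lambda$. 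For the bad part, the zero-mean and support properties of $b_k$ on $\widehat{Q_k}$ force $\langle b_k\rangle_Q=0$ for every $Q\supseteq\widehat{Q_k}$ and $\Delta_Q b_k=0$ for every $Q\supsetneq \widehat{Q_k}$; substituting these into the explicit series for $T_Qb_k$ in each of the three cases shows that $T^{\#}b_k$ is supported in $\widehat{Q_k}$. Sublinearity of $T^{\#}$ combined with the standard packing estimate $\sum_k\mu(Q_k)\lesssim \|f\|_{L^1}/\lambda$ inherent to the Calder\'on--Zygmund construction then finishes the bad part.

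The main obstacle I expect to have to address carefully is the identification in the first step: showing that the localized piece $T_1^{Q_0}|_{Q_0}$ genuinely fits the hypothesis of \cref{L^p bound of paraprod} with an $L^p$ constant controlled uniformly by $\|b\|_{\BMO}$. This is precisely where one must use the true $\bmo_p/\BMO$ control rather than the stronger packing condition \eqref{E: lacey packing}, the gap between the two being exactly what \cref{prop: BMOStrictInLaceyPack} makes explicit; it is this local $L^p$ theory that allows one to bypass Lacey's assumption.
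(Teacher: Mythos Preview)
Your Cotlar step and the resulting $L^p$ bounds are fine; the pointwise inequality $T^{\#}f \lesssim_p M_\D(Tf)+\|b\|_{\BMO}(M_\D|f|^p)^{1/p}$ is a clean alternative to the paper's use of $M_\D(Tf)+\|b\|_{\BMO}M_\D^q(Sf)$.

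The weak $(1,1)$ argument, however, has a genuine gap in the treatment of the bad part. You correctly observe that each $T^{\#}b_k$ is supported in $\widehat{Q_k}$, but then invoke the packing $\sum_k\mu(Q_k)\lesssim\|f\|_{L^1}/\lambda$. That estimate bounds $\mu\bigl(\bigcup_k Q_k\bigr)$, not $\mu\bigl(\bigcup_k \widehat{Q_k}\bigr)$, and in the non-doubling setting there is no passage from one to the other: $\mu(\widehat{Q_k})/\mu(Q_k)$ can be arbitrarily large, so the level set $\{T^{\#}\beta>\lambda/2\}\subseteq\bigcup_k\widehat{Q_k}$ is not controlled by $\|f\|_{L^1}/\lambda$ via support alone. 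This is exactly the obstruction the paper is designed to overcome.

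The paper's remedy is to peel off the single ``boundary'' scale $Q=\widehat{Q_k}$ and treat it separately in $L^1$. For $T\in\{\Delta_b,\Pi_b^*\}$ one writes
\[
T^{\#}\beta(x)\le A(x)+B(x),\qquad B(x)=\sum_k|T_{\widehat{Q_k}}b_k(x)|,
\]
where $A$ collects only scales $Q\subsetneq\widehat{Q_k}$. Since $b_k$ is constant on each sibling of $Q_k$, one has $\Delta_Q b_k=0$ for $Q$ contained in a sibling, so $A$ is genuinely supported in $\bigcup_k Q_k$ and the packing estimate applies. For $B$ one uses \cref{lemma for sparse} together with $\|b_k\|_{L^1}\lesssim\int_{Q_k}|f|$ to get $\|B\|_{L^1}\lesssim\|b\|_{\BMO}\|f\|_{L^1}$, whence $\mu\{B>\lambda/4\}\lesssim\|b\|_{\BMO}\|f\|_{L^1}/\lambda$. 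This $L^1$ control of the parent-scale term is the missing ingredient in your argument; without it one cannot avoid the uncontrolled measure of $\bigcup_k\widehat{Q_k}$.
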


\begin{proof}

    The following dyadic Cotlar's type inequality was shown in \cite{HF2023}: \[\Pi_b^\#f(x) \leq M_\D(\Pi_bf)(x), \qquad \forall \, x \in \R^n\]
    where $M_\D$ is the dyadic maximal function, and $L^p$ boundedness follows. Recall that $\Delta_bf$ is $L^p(\mu)$ bounded if and only if $b \in \BMO$ and $\|\Delta_b\|_{L^p(\mu) \to L^p(\mu)} \sim \|b\|_\BMO.$ Then
\begin{align*}\E_{Q_0}(\Delta_b f)(x)=& \sum_{Q_0 \subsetneq Q} \E_{Q_0}(\Delta_Qb\Delta_Qf)(x)+ \E_{Q_0}\bigg(\sum_{Q \in \D (Q_0)} (\Delta_Qb\Delta_Qf)(x)\bigg) \\ =& \sum_{Q_0 \subsetneq Q} \Delta_Qb(x)\Delta_Qf(x)+ \E_{Q_0}\bigg(\sum_{Q \in \D (Q_0)} (\Delta_Qb\Delta_Qf)(x)\bigg),\end{align*}
since the first sum is constant on $Q_0$. For $ x \in Q_0$ and $b \in \BMO$, \eqref{BMO prop 2} gives for $1<q<\infty$
\begin{align*}
\frac{1}{\mu(Q_0)}\int_{Q_0} \sum_{Q \in \D(Q_0)} \Delta_Q b \Delta_Q f & \leq \frac{1}{\mu(Q_0)}\int_{Q_0} \bigg(\sum_{Q \in \D(Q_0)}  |\Delta_Qb|^2 \bigg)^{\frac{1}{2}}  \bigg(\sum_{Q \in \D(Q_0)}  |\Delta_Qf|^2 \bigg)^{\frac{1}{2}} \, dx  \\ 
& \leq  \bigg( \frac{1}{\mu(Q_0)}\int_{Q_0} \big(\sum_{Q \in \D(Q_0)}  |\Delta_Qb|^2 \big)^{\frac{q'}{2}} \, dx\bigg)^{\frac{1}{q'}} (\ang{Sf^q}_{Q_0})^{\frac{1}{q}} \\
& \lesssim_q \|b\|_\BMO (\ang{Sf^q}_{Q_0})^{\frac{1}{q}}, 
\end{align*}
where $Sf$ is the dyadic square function. Therefore, for every $1<q<\infty$
\begin{equation}\label{dyadic cotlar}\Delta_b^\#f(x) \leq M_\D(\Delta_bf)(x)+ C_q\|b\|_\BMO M^q_\D(Sf)(x),\end{equation}
where $M_\D^qf(x)= \sup_{Q_0 \in \D} \langle |f|^q\rangle_{Q_0}^{\frac{1}{q}}\1_{Q_0}(x).$
Note that the first term is $L^p$ bounded for every $1<p<\infty$ and the second is $L^p$ bounded for $p>q$. Then for every $1<p<\infty$, choosing $1<q<p$ we conclude that \[\|\Delta_b^\#\|_{L^p(\mu) \to L^p(\mu)}\lesssim_p \|b\|_\BMO.\]
The argument for $(\Pi_b^*)^\#$ is essentially the same, since 
\begin{align*}\E_{Q_0}(\Pi_b^* f)(x)=& \sum_{Q_0 \subsetneq Q} \E_{Q_0}(\E_Q(\Delta_Qb\Delta_Qf))(x)+ \E_{Q_0}\bigg(\sum_{Q \in \D (Q_0)} \E_Q(\Delta_Qb\Delta_Qf)(x)\bigg) \\ =& \sum_{Q_0 \subsetneq Q}\E_Q(\Delta_Qb\Delta_Qf)(x)+ \E_{Q_0}\bigg(\sum_{Q \in \D (Q_0)} \E_Q(\Delta_Qb\Delta_Qf)(x)\bigg) \\ =& \sum_{Q_0 \subsetneq Q}\E_Q(\Delta_Qb\Delta_Qf)(x)+ \E_{Q_0}\bigg(\sum_{Q \in \D (Q_0)} (\Delta_Qb\Delta_Qf)(x)\bigg),\end{align*}
where in the last equality we used that 
\[\Delta_Q b\Delta_Qf= \Delta_Q(\Delta_Q b\Delta_Qf)+\E_Q(\Delta_Q b\Delta_Qf)\] and the fact that $\E_{Q_0}(\Delta_Q(\Delta_Qb\Delta_Qf))=0$ for $Q \in \D(Q_0).$ This leads to the same behaviour as in \eqref{dyadic cotlar} with $\Pi_b^*$ instead of $\Delta_b$ and to $L^p$ boundedness with operator norm depending on $\|b\|_\BMO$. \par

Now we turn to weak $(1,1)$ boundedness. Let $\lambda>0$, $f$ be compactly supported and $f=g+ \beta$ the Calder\'on-Zygmund decomposition given in Lemma \ref{L: CZD} of $f$ at height $\lambda.$ We deal with $\Pi_b^\#$ first: by the $L^2$ boundedness of maximal truncations
 \begin{align*}
     \mu \left(\left\{ x: |\Pi^\#_bf(x)| > \lambda \right\}\right) \leq &  \mu \left( \left\{ x: |\Pi^\#_bg(x)| > \lambda/2 \right\}\right)+ \mu \left( \left\{ x: |\Pi^\#_b \beta(x)| > \lambda/2 \right\}\right) \\ \leq & \frac{C}{\lambda} \|b\|_{\mathrm{BMO}(\mu)}\|f\|_{L^1(\mu)}  + \mu \left(\left\{ x: |\Pi^\#_b \beta(x)| > \lambda/2 \right\}\right),
 \end{align*}
 Hence we only have to estimate the second term. Observe that, if $\widehat{Q_j} \subseteq Q$, then 
 $$ \LL \beta_j \RR_{Q}= \LL f \1_{Q_j} \RR_Q - \LL f\1_{Q_j}\RR_{\widehat{Q_j}} \frac{\mu(\widehat{Q_j})}{\mu(Q)}=0,$$
 Therefore $$\Pi_b^\# \beta(x) \leq  \sup_{Q_0 \ni x} \bigg| \sum_j \sum_{Q_0 \subsetneq Q \subseteq Q_j}\E_Q \beta_j(x) \Delta_Qb(x)\bigg|.$$
In particular, $\Pi_b^\#(\beta)$ is supported in $\bigcup_jQ_j$, so 
$$\mu \left(\left\{ x: |\Pi^\#_b \beta(x)| > \lambda/2 \right\}\right)\leq \mu \left(\bigcup_j Q_j\right) \leq \frac{\|f\|_{L^1(\mu)}}{\lambda}.$$
This concludes that $\|\Pi_b^\#\|_{L^{1}(\mu) \to L^{1,\infty}(\mu)} \lesssim \|b\|_\BMO.$ \par
Similarly, for $\Delta_b^\#$ we only need to study $$\mu \left( \left\{ x: |\Delta^\#_b \beta(x)| > \lambda/2 \right\}\right).$$ Since \(\Delta_Q(\beta_j) \neq 0\) if and only if \( Q \subseteq \widehat{Q_j},\) we get $$\Delta_b^\# \beta(x) \leq  \sup_{Q_0 \ni x} \bigg| \sum_j \sum_{Q_0 \subsetneq Q \subseteq Q_j} \Delta_Qb(x) \Delta_Q \beta_j (x)\bigg|+\sum_j \big|\Delta_{\widehat{Q_j}}b(x) \Delta_{\widehat{Q_j}}\beta_j(x)\big|= A(x)+B(x).$$
As before, we have
$$\mu \left(\left\{x: A(x) > \frac{\lambda}{4} \right\}\right) \leq \mu \left(\bigcup_j Q_j\right) \leq \frac{\|f\|_{L^1(\mu)}}{\lambda}.$$
Using \cref{lemma for sparse} and $\ang{\beta_j}_{\widehat{Q_j}}=0$, combined with \cref{L: CZD} and $b \in \BMO$
\begin{align*}
    \|B\|_{L^1}\leq & \|b\|_\BMO \sum_j \|\Delta_{\widehat{Q_j}}\beta_j\|_{L^1(\mu)} \\\leq &\|b\|_\BMO \sum_j \int_{\widehat{Q_j}} |\beta_j| \\ \leq &\|b\|_\BMO \sum_j \|\beta_j\|_{L^1(\mu)} \leq \|b\|_\BMO \|f\|_{L^1(\mu)}.
\end{align*}
We finally get 
$$\mu \left(\left\{x: B(x) > \frac{\lambda}{4} \right\}\right) \lesssim \frac{\|b\|_\BMO\|f\|_{L^1(\mu)}}{\lambda}.$$
The same argument used for $\Delta_b^\#$ works for $(\Pi_b^*)^\#$ by noticing that
\[\|\E_Q(\Delta_Qb\Delta_Q \beta_j)\|_{L^1} \leq \|\Delta_Qb\Delta_Q\beta_j\|_{L^1} \leq \|b\|_\BMO \|\Delta_Q\beta_j\|_{L^1}.\]
\end{proof}
\begin{proof}[Proof of \cref{thm:SparseDom}] 
Let $T \in \{ \Pi_b, \Pi_b^*, \Delta_b \}$. We can assume $Q_0 \in \D$, otherwise we can replace $Q_0$ with a larger cube. Note that for a.e. $x\in Q_0$,
    \[Tf(x) =\sum_{Q \in \mc{D}(Q_0 )}T_Qf(x) + \sum_{Q \in \mc{D}: Q_0 \subsetneq Q}T_Qf(x)=:T^{Q_0}f(x) +\widetilde{T}f(x).\]
From Proposition \ref{weak (1,1) max. trunc}, for any $C>4 \|T^\#\|_{L^1(\mu) \to L^{1,\infty}(\mu)}$
    \begin{align*}
        \mu(\{x \in Q_0: |\sq{T}f(x)| > C\ang{|f|}_{Q_0}\}) \leq \frac{1}{4}\mu(Q_0).
    \end{align*}
On the other hand, for any such $T$ we have that $T_Qf(x)$ is constant on $Q_0$ when $Q_0 \subsetneq Q$, hence $\widetilde{T}f(x)$ is constant as well. Therefore, choosing $C$ as before we argue \[|\widetilde{T}f(x)| \leq C \ang{|f|}_{Q_0} \quad \text {on $Q_0$,} \] and it suffices to bound the local operator $T^{Q_0}$. \\
For any $T$ as above, let $B(Q_0):=\{Q_j\}_j$ the set of maximal intervals in $\D(Q_0)$ such that
\begin{equation}\label{E: stop}
    \LL |f|\RR_{Q_j} > C_1 \LL |f| \RR_{Q_0} \quad \text{ or } \quad \bigg| \sum_{Q_j \subsetneq Q \subseteq Q_0} T_Q(f\1_{Q_0})(x) \bigg| > C_2 \LL|f| \RR_{Q_0} \quad \text{ on $Q_j$.}
\end{equation}
Denote $B^1(Q_0)$ the intervals in $B(Q_0)$ such that the first stopping condition holds, and $B^2(Q_0)$ the intervals in $B(Q_0)$ such that the second holds. Consider the operator  \[T^1 = \sum_{Q \in \mc{D}(Q_0) \setminus \bigcup_{Q_j \in B^2(Q_0)}\mc{D}(Q_j)}T_Q.\]
    Then if $x \in Q_j$ we have $|T^1(f\1_{Q_0})(x)| > C_2 \ang{|f|}_{Q_0}$ by \eqref{E: stop}. Choosing $C_2>4\|T^\#\|_{L^1(\mu) \to L^{1,\infty}(\mu)}$ 
    \[\sum_{Q_j \in B^2(Q_0)}\mu(Q_j) \leq \mu\left(\{x \in Q_0 : |T^1(f\1_{Q_0})(x)|>C_2 \ang{|f|}_{Q_0}\}\right) \leq \frac{1}{4}\mu(Q_0).\] 
    Similarly, we can use the weak $(1,1)$ bound for the dyadic Hardy-Littlewood maximal function to bound the sum of the measures of the cubes satisfying the first stopping condition in \eqref{E: stop} by $\frac{1}{4}\mu(Q_0)$. Altogether we get
    \[\sum_{Q_j \in \mc{B}(Q_0)}\mu(Q_j) \leq \frac{1}{2}\mu(Q_0).\]
We now form a sparse family $\mc{S}$ in the standard way: set $\mc{B}_0(Q_0) := \{Q_0\}$ and inductively define 
    \[\mc{B}_k(Q_0) := \bigcup_{Q \in \mc{B}_{k-1}(Q_0)}\mc{B}(Q).\]
    The family
    \[\mc{S} = \bigcup_{k=0}^{\infty} \mc{B}_k(Q_0)\]
    is then $\frac{1}{2}$-sparse. 
Finally
    $$\big|T^{Q_0}(f)(x)\1_{Q_0}(x)\big| \leq \big|T^{Q_0}(f)(x)\1_{Q_0 \setminus \bigcup_j Q_j}(x)\big| + \sum_{j} \big|T^{Q_0}(f)(x)\1_{Q_j}(x)\big|.$$

The first term is controlled by $C_2 \LL |f| \RR_{Q_0}$. Moreover, for $x \in Q_j$ \begin{equation} \label{E: sparse iteration}|T^{Q_0}(f)(x)|\leq  |T_{\widehat{Q_j}}f(x)|+ \bigg|\sum_{\widehat{Q_j} \subsetneq Q \subseteq Q_0} T_Qf(x)\bigg| + |T^{Q_j}(f\1_{Q_j})(x)\1_{Q_j}(x)|.\end{equation}
Then, by $\eqref{E: stop}$, the second term is controlled by $C_2 \LL |f| \RR_{Q_0}.$ Hence, to iterate the procedure, we only need to control the first term for any given $T \in \{\Pi_b, \Pi^\ast_b, \Delta_b\}.$ By Lemma \ref{lemma for sparse} and \eqref{E: stop}, since $\widehat{Q_j}$ was not selected, if $x \in Q_j$ we have for $C=C\big(\|M_\D\|_{L^1(\mu) \to L^{1,\infty}(\mu)},\|T^\#\|_{L^1(\mu) \to L^{1,\infty}(\mu)}\big)$
\begin{align*}
 |\LL f \RR_{\widehat{Q_j}}\Delta_{\widehat{Q_j}}b(x)| & \leq \|b\|_{\mathrm{BMO}(\mu)} \LL |f| \RR_{\widehat{Q_j}}\1_{Q_j}(x) \leq C \|b\|_{\mathrm{BMO}(\mu)} \LL |f| \RR_{Q_0}\1_{Q_j}(x);\\
 \left| \E_{\widehat{Q_j}}\big( \Delta_{\widehat{Q_j}}b\Delta_{\widehat{Q_j}}f \big)(x) \1_{Q_j}(x)\right| & \leq 2 \|b\|_\BMO\langle |f| \rangle_{\widehat{Q_j}}\1_{Q_j}(x) \leq C\|b\|_\BMO \langle |f| \rangle_{Q_0}\1_{Q_j}(x);\\
 |\Delta_{\widehat{Q_j}}b(x) \Delta_{\widehat{Q_j}} f(x)\1_{Q_j}(x)| & \leq \|b\|_{\BMO}\|\Delta_{\widehat{Q_j}} f(x)\1_{Q_j}(x)\|_\infty \leq \|b\|_\BMO \big(\langle |f|\rangle_{Q_j}+ C\ang{|f|}_{Q_0}\big).
 \end{align*}
We obtain for any $T$ as above and 
 \begin{equation} \label{iteration}
\big|T^{Q_0}(f)(x)\1_{Q_j}(x)\big| \leq C \|b\|_{\mathrm{BMO}(\mu)} \big(\LL |f| \RR_{Q_0} + \ang{|f|}_{Q_j}\big) + \sum_j |T^{Q_j}(f\1_{Q_j})(x)\1_{Q_j}(x)|     
 \end{equation} 
and we can iterate the procedure for $T^{Q_j}$, for any $Q_j \in B(Q_0)$. Notice that from \eqref{iteration} the average over any $Q \in \mc{S}$ will appear at most twice. We can conclude that for any $T \in \{\Pi_b, \Pi^\ast_b, \Delta_b\}$ and $f \in L^1(\mu)$ supported on $Q_0$, there exists a sparse family $\mathcal{S}=\mathcal{S}(T,f)$ such that $$|Tf(x)| \lesssim \|b\|_\BMO \mathcal{A}_\mathcal{S}|f|(x), \quad \text{a.e. }x \in Q_0.$$ \end{proof}

\begin{corollary}
    Let $1<p<\infty $ and $w \in A^\D_p(\mu)$, i.e. \[ [w]_{A^\D_p(\mu)}:= \sup_{Q \in \D} \ang{w}_Q \ang{\sigma}_Q^{p-1}<\infty, \]
where $\sigma=w^{1-p'}$ is the $p$-dual weight of $w$. For any $T \in \{ \Pi_b, \Pi_b^\ast, \Delta_b \}$,  $1<p<\infty$ and $ w \in A_p^\D(\mu)$ there exists a constant $C=C(p,n,T)$ \[\|T\|_{L^p(w) \to L^p(w)} \leq C \|b\|_\BMO [w]_{A_p^\D(\mu)}^{\max \big(1, \frac{1}{p-1}\big)}.\]
\end{corollary}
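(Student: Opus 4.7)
The strategy is to combine the pointwise sparse domination (Theorem~\ref{thm:SparseDom}) with the known weighted estimate for dyadic sparse operators in the nonhomogeneous setting. First, by standard density arguments it suffices to prove the bound for $f \in L^p(w) \cap L^1(\mu)$ compactly supported in some $Q_0 \in \D$. For such $f$, Theorem~\ref{thm:SparseDom} yields a $\tfrac12$-sparse family $\mathcal{S}=\mathcal{S}(f,T) \subseteq \D(Q_0)$ with $|Tf(x)| \leq C(n,T)\|b\|_\BMO \mathcal{A}_{\mathcal{S}}|f|(x)$ a.e.\ on $Q_0$. Therefore
\[
\|Tf\|_{L^p(w)} \leq C(n,T)\|b\|_\BMO \sup_{\mathcal{S}\,\text{sparse}} \|\mathcal{A}_\mathcal{S}|f|\|_{L^p(w)},
\]
reducing the problem to the weighted bound
\begin{equation}\label{eq:sparsegoal}
\|\mathcal{A}_\mathcal{S}\|_{L^p(w) \to L^p(w)} \leq C(p,n)[w]_{A_p^\D(\mu)}^{\max(1,1/(p-1))}
\end{equation}
for every sparse $\mathcal{S} \subseteq \D$.

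To establish \eqref{eq:sparsegoal}, I would run the standard Lacey/Moen-type argument adapted to the general Radon measure $\mu$. By duality, with $\sigma = w^{1-p'}$ and $g \in L^{p'}(w)$ of norm one, the bilinear form expands as
\[
\int \mathcal{A}_\mathcal{S}(f) g \, d\mu = \sum_{Q \in \mathcal{S}} \frac{1}{\mu(Q)}\!\left(\int_Q f\, d\mu\right)\!\left(\int_Q g\, d\mu\right).
\]
Writing $f = (f w^{-1/p})\, w^{1/p}$ and $g = (g w^{1/p})\,\sigma^{1/p'}\cdot w^{1/p'}\sigma^{-1/p'}$ and invoking the $A_p^\D(\mu)$ hypothesis $\langle w\rangle_Q \langle \sigma\rangle_Q^{p-1}\leq [w]_{A_p^\D(\mu)}$, one bounds a typical term by the product of weighted averages against $w$ and $\sigma$. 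Using the sparseness of $\mathcal{S}$ and the equivalence between $\eta$-sparseness and Carleson (valid here by the atomless assumption and \cite{Hanninen}), one invokes the Carleson embedding theorem on the $w$- and $\sigma$-measure spaces, combined with the dyadic maximal theorem in each, to close the estimate. Balancing the two maximal operators yields the exponent $\max(1, \tfrac{1}{p-1})$, which is linear in $[w]_{A_2^\D(\mu)}$ when $p=2$ and dual-invariant under $p \leftrightarrow p'$ as expected.

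\textbf{Main obstacle.} The only nontrivial point is verifying that the classical sparse/$A_p$ machinery goes through verbatim in the nonhomogeneous dyadic setting: the argument above uses only dyadic averages with respect to $\mu$, the Carleson embedding with $d\mu$, and the dyadic Hardy--Littlewood maximal theorem on $(\R^n,\mu)$, none of which require doubling. The only delicate point is the sparse/Carleson equivalence, which the paper has already recorded via \cite{Hanninen} under the atomless assumption on $\mu$. Once \eqref{eq:sparsegoal} is in hand, chaining it with the pointwise sparse control from Theorem~\ref{thm:SparseDom} completes the proof.
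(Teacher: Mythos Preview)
Your proposal is correct and follows exactly the route the paper intends: the corollary is stated immediately after the proof of Theorem~\ref{thm:SparseDom} with no separate argument, since it follows directly by combining the pointwise sparse domination there with the standard sharp weighted bound for dyadic sparse operators. Your sketch of \eqref{eq:sparsegoal} via the Lacey--Moen argument is more detail than the paper supplies, but it is the expected justification and requires no doubling, as you note.
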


\begin{remark}
    The same strategy of \cref{thm:SparseDom} can be applied almost verbatim to vector valued paraproduct forms. If $T$ is a linear operator acting on scalar valued functions and $f: \R^n \to \R^d$, we abuse notation writing $Tf$ instead of $(T \otimes I_d)(f)$, where $$(T \otimes I_d)(f)=(Tf_1, \dots, Tf_d).$$ The convex body average $\dang{f}_Q$ is the compact, convex and symmetric set defined as the image of the unit ball of $L^\infty(Q)$ under the bounded linear functional defined by the pairing with $f$
\[\dang{f}_Q := \{ \LL f \psi \RR_Q, \ \psi: Q \to \R, \|\psi \|_\infty \leq 1 \},\]
 where 
 \[ \LL f \psi \RR_Q := \frac{1}{\mu(Q)} \int_Q f(x) \psi(x) d\mu(x),\] 
is the vector whose $i$-th component is $\langle f_i \psi \rangle_Q$, for $i=1, \dots, d.$

Then one can follow the same proof as in \cite[Theorem 3.13, pag. 18]{BBDPW} to prove for any $T \in \{\Delta_b, \Pi^\ast_b, \Pi_b\}$ that \[Tf(x) \in C\sum_{Q \in \mc{S}}\dang{f}_Q \1_{Q}(x) \quad \text{ on } Q_0. \]
As an application it follows that for any $1 < p < \infty$ and $W \in A_p$, we have 
    \[ \|T\|_{L^p(W) \to L^p(W)} \lesssim_{p,d} [W]_{A_p}^{1 + \frac{1}{p-1} - \frac{1}{p}}.\]   
We refer to \cite{BBDPW} for more details.
\end{remark}

\section{Weighted Inequalities for Commutators with dyadic shifts} \label{NewWeighted}
In this section, we see how Theorem \ref{thm:SparseDom} leads to the following strengthened weighted inequalities for the commutator $[T,b]$ with dyadic shifts. Indeed, this approach removes the key obstacle of requiring a reverse H\"{o}lder inequality for the weight $w.$
Recall that, for a fixed dyadic grid $\D$, we assume for simplicity that $\mu$ is a Radon measure on $\R^n$ such that $0<\mu(Q)< \infty$ for any $Q \in \D$. This is not a structural restriction and can be removed; see for example the discussion in \cite{LSMP}, \cite{TREIL2010} and \cite{BBDPW}. We further suppose that $\mu$ is atomless. Many of the following definitions are quoted verbatim from \cite{BBDPW}.
\subsection{Haar shifts: modified sparse domination and weighted inequalities}

\begin{definition}\label{gen haar systems}
We say $\mathscr{H}=\{h_Q\}_{Q \in \D}$ is a generalized Haar system in $\R^n$ if the following holds:
\begin{enumerate}
    \item for every $Q \in \D$ we have $\mathrm{supp}(h_Q) \subset Q$;
    \item for every $R \in \mathcal{D}(Q)$, $R\subsetneq Q$, $h_Q$ is constant on $R$; in particular \[h_Q(x)=\sum_{R \in \mathrm{ch}(Q)} \alpha_R \1_R(x);\] 
    \item for every $Q \in \D$, $h_Q$ has zero mean, i.e. $\int_Q h_Q(y)d\mu(y)=0$;
    \item for every $Q \in \D$, we have $\|h_Q\|_{L^2(\mu)}=1.$

\end{enumerate}
Furthermore, we say $\mathscr{H}$ is standard if 
\begin{equation} \label{D: standard}
    \Xi \left[\mathscr{H}, 0,0 \right]:= \sup_{Q \in \D} \|h_Q\|_{L^1(\mu)} \|h_Q\|_{L^\infty(\mu)} <\infty.   
   \end{equation}
\end{definition}

\begin{remark}
A generalized Haar system $\mathscr{H}$ is in general an orthonormal set in $L^2(\R^n)$, not necessarily an orthonormal basis for $L^2(\R^n)$. However, we still have
\begin{equation} \label{Bessel}\sum_Q |\langle f, h_Q\rangle|^2 \leq \|f\|_{L^2(\mu)}^2.\end{equation}
\end{remark}
\begin{definition}\label{def: vectorhaarshift}
A generalized Haar shift $T$ of complexity $(s,t)$ acting (a priori) on $f \in L^2(\R^n)$ takes the form
\begin{gather} \label{D: dyadic shifts}Tf(x)= \sum_{Q \in \D} T_Qf(x):= \sum_{Q \in \D} \int_Q K_Q(x,y)f(y)d\mu(y),
\end{gather}
where
\begin{gather*}
K_Q(x,y)=\sum_{\substack{J \in \D_s(Q)\\ K \in \D_t(Q)}} c_{J,K}^Qh_J(y)h_K(x), \quad \text{ and }\quad \sup_{Q,J,K} | c_{J,K}^Q| \leq 1.\end{gather*} 
If, in addition, one has $\inf_{Q,J,K} | c_{J,K}^Q|>0$, then we say that $T$ is a non-degenerate (vector) Haar shift of complexity $(s,t)$.
\end{definition} 
It is straightforward to check that \eqref{Bessel} implies that for every $(s,t) \in \N^2$ every generalized Haar shift of complexity $(s,t)$ is bounded on $L^2(\mu)$. \par
\begin{definition}\label{D: L^1 normalized}
We say a generalized Haar shift $Tf(x)= \sum_{Q \in \D}T_Q f(x)$ defined as in Definition \cref{D: dyadic shifts} is $L^1$ normalized if \begin{equation} \label{E: L1 norm}\|K_Q\|_{\infty}  \lesssim_{\mu} \frac{1}{\mu(Q)}, \quad \text{ for any $Q \in \D$}.\end{equation}
\end{definition}
This subclass of shifts was already studied in \cite{BBDPW}. In the doubling setting, the decay of the kernel, depending on $\frac{1}{\mu(Q)}$, easily follows from norm properties of Haar functions, and the implicit constant usually depends exponentially on the complexity if the shift has merely $\ell^\infty$ coefficients. However, the dyadic operators appearing in applications - say, in representation theorems -  have extra normalization which justifies \eqref{E: L1 norm}. In the nonhomogeneous setting, the kernel of a shift with merely $\ell^\infty$ coefficients does not even have the usual measure decay.

We now come to the balanced condition. Given a pair $(\mu, \mathscr{H})$, where $\mathscr{H}$ is a generalized Haar system and $\mu$ as above, define the quantities 
\begin{equation}\label{def:m(Q)}
    m(Q)= m_{\mu, \mathscr{H}}(Q) := \|h_Q\|_{L^1(\mu)}^2.
\end{equation}
\begin{definition} \label{def of balanced pairs} 
    We say that a pair $(\mu, \mathscr{H})$ is balanced if $\mathscr{H}$ is standard and
    \begin{equation} \label{balanced_higher_dim}
    m(Q) \sim m(\widehat{Q}), \quad \text{ for every $Q \in \D$}
    \end{equation}
\end{definition}
The following proposition was proved in \cite{BBDPW}.

\begin{proposition}
    If a pair $(\mu, \mathscr{H})$ is balanced, every generalized Haar shift defined with respect to $\mathscr{H}$ is weak $(1,1)$ and bounded on $L^p(\mu)$ for any $1<p<\infty.$
    If a generalized Haar shift defined with respect to a generalized Haar system $ \mathscr{H} $ and any measure $\mu$ is $L^1$ normalized, then it is weak $(1,1)$ and bounded on $L^p(\mu)$ for any $1<p<\infty.$
\end{proposition}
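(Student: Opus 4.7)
The plan is to follow the classical Calderón-Zygmund scheme adapted to the martingale framework, essentially as in \cite{BBDPW}. First, $L^2(\mu)$ boundedness is essentially automatic: for a shift $T$ of complexity $(s,t)$, expanding via \eqref{D: dyadic shifts} and applying the uniform coefficient bound $|c_{J,K}^Q|\leq 1$ together with Bessel's inequality \eqref{Bessel} on both the $x$- and the $y$-sides yields $\|Tf\|_{L^2(\mu)}\lesssim\|f\|_{L^2(\mu)}$, with a constant depending only on $s$ and $t$. Granted $L^2$, the core of the proof is a weak-type $(1,1)$ estimate; once established, Marcinkiewicz interpolation covers $1<p\leq 2$, and the range $p\in(2,\infty)$ follows by duality, since the formal adjoint of a generalized Haar shift of complexity $(s,t)$ associated to $\mathscr{H}$ is again a generalized Haar shift, of complexity $(t,s)$, associated to the same Haar system.

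For the weak $(1,1)$ estimate I would invoke the nonhomogeneous Calderón-Zygmund decomposition \cref{L: CZD}: for $f\in L^1(\mu)$ and $\lambda>0$, write $f=g+\sum_j b_j$, where $\|g\|_{L^2(\mu)}^2\lesssim\lambda\|f\|_{L^1(\mu)}$, each $b_j$ is supported on $\widehat{Q_j}$ with mean zero on $\widehat{Q_j}$ and $\|b_j\|_{L^1(\mu)}\lesssim\int_{Q_j}|f|\,d\mu$, and $\sum_j\mu(\widehat{Q_j})\lesssim\lambda^{-1}\|f\|_{L^1(\mu)}$. The good part is handled by Chebyshev plus the $L^2$ bound, and the exceptional set $\bigcup_j\widehat{Q_j}$ is discarded at cost $\lambda^{-1}\|f\|_{L^1(\mu)}$. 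The task reduces to the uniform kernel bound $\sum_j\|Tb_j\|_{L^1(\R^n\setminus\widehat{Q_j})}\lesssim\|f\|_{L^1(\mu)}$.

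Fix $j$. By the support properties of $h_J$ and $h_K$, only cubes $Q\supsetneq\widehat{Q_j}$ can contribute to $Tb_j(x)$ for $x\notin\widehat{Q_j}$; moreover, whenever $J\in\D_s(Q)$ satisfies $J\supseteq\widehat{Q_j}$, the Haar function $h_J$ is constant on $\widehat{Q_j}$, hence $\langle b_j,h_J\rangle=0$ by mean zero. Only the terms $J\subsetneq\widehat{Q_j}$ survive, and these one controls pointwise by $\|h_J\|_\infty\cdot|\langle b_j,\cdot\rangle|$ on the $y$-side and by a factor of $\|h_K\|_{L^1(\mu)}$ on the $x$-side. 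The standard condition \eqref{D: standard} then converts these products into factors of the form $m(\widehat{Q_j})/m(Q)$, and the balanced condition \eqref{balanced_higher_dim} keeps these ancestral ratios uniformly bounded as $Q$ ranges over the at most $s+t$ strict ancestors of $\widehat{Q_j}$ that are relevant. Summing over these ancestors and then over $j$, together with $\sum_j\|b_j\|_{L^1(\mu)}\lesssim\|f\|_{L^1(\mu)}$, yields the desired estimate. The $L^1$-normalized case bypasses this bookkeeping entirely: from $\|K_Q\|_\infty\leq C\mu(Q)^{-1}$ one directly obtains $\|T_Q b_j\|_{L^1(Q)}\lesssim\|b_j\|_{L^1(\mu)}$ without reference to the balanced assumption, and summing over the at most $s+t$ ancestors $Q$ that can contribute closes the argument for any $\mu$. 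The main obstacle is precisely the kernel bound in the general (not $L^1$-normalized) case: in the nonhomogeneous setting $L^\infty$ norms of Haar functions need not decay uniformly across generations, and without the balanced condition the off-diagonal sums can diverge; \eqref{balanced_higher_dim} is exactly what tames the ratios $m(Q)/m(\widehat{Q})$ so that the ancestral sums in $j$ are convergent, uniformly in $Q$.
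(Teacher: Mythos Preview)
The paper does not give its own proof here; it simply cites \cite{BBDPW}. Your outline follows that reference and is essentially correct, but one step is a genuine non-doubling pitfall.

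You claim $\sum_j\mu(\widehat{Q_j})\lesssim\lambda^{-1}\|f\|_{L^1(\mu)}$ and discard $\bigcup_j\widehat{Q_j}$. This is false in general: neither the balanced condition nor $L^1$-normalization implies dyadic doubling, so $\mu(\widehat{Q_j})$ can be arbitrarily larger than $\mu(Q_j)$ (for the balanced measure in \cref{prop: BMOStrictInLaceyPack} one has $\mu(I_k^b)/\mu(I_{k-1})\sim 1/k$), and \cref{L: CZD} gives no control on the parents. The correct exceptional set is $\bigcup_j Q_j$, which does satisfy $\mu\big(\bigcup_j Q_j\big)\leq\lambda^{-1}\|f\|_{L^1(\mu)}$. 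You must then estimate $Tb_j$ on $\R^n\setminus Q_j$ rather than $\R^n\setminus\widehat{Q_j}$; this adds exactly one more cube, $Q=\widehat{Q_j}$, to your list of relevant ancestors, and your kernel analysis handles it identically. (A related minor slip: $J=\widehat{Q_j}$ also survives, since $h_{\widehat{Q_j}}$ is not constant on $\widehat{Q_j}$; only $J\supsetneq\widehat{Q_j}$ is killed by the mean-zero of $b_j$.) With this correction the remainder of your argument---bounding $\|h_J\|_\infty\|h_K\|_{L^1}$ via \eqref{D: standard} and controlling the ratio $m(K)^{1/2}m(J)^{-1/2}$ across the finitely many relevant generations via \eqref{balanced_higher_dim}, or in the $L^1$-normalized case using \eqref{E: L1 norm} directly---goes through as written.
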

Note that given a Radon measure $\mu$ as before one can build two Haar systems $\mathscr{H}$ and $\widetilde{\mathscr{H}}$ such that $(\mu, \mathscr{H})$ is balanced but $(\mu, \widetilde{\mathscr{H}})$ is not, see \cite[Section 4.3]{LSMP}. On the other hand, it is easy to show that if $(\mu,\mathscr{H})$ is balanced, then  $$m(Q)\sim \min\{\mu(R)\colon R\in \mathrm{ch}(Q)\}.$$ This means that for two generalized Haar systems $\mathscr{H}$ and $\widetilde{\mathscr{H}}$ such that $(\mu, \mathscr{H})$ and $(\mu, \widetilde{\mathscr{H}})$ are balanced pairs, we have that 
\begin{equation} \label{equiv of balanced constants}m_{\mu, \mathscr{H}}(Q) \sim m_{\mu, \widetilde{\mathscr{H}}}(Q), \quad \text{ for every } Q \in \D.\end{equation}
For a deeper treatment of balanced pairs, see \cite{BBDPW}.
\begin{remark}
Let us comment on the generality of the previous definitions. Recall that  \[\Delta_Q:L^2(\mu) \to \Delta_QL^2(\mu)\] is an orthogonal projection on the $2^{n}-1$ dimensional vector space $\Delta_QL^2(\mu)$, and it holds that \[L^2(\mu)= \bigoplus_{Q \in \D} \Delta_QL^2(\mu).\]
In particular,  $\Delta_QL^2(\mu)$ is a linear span of the set $V_Q= \{h_Q^1,\dots, h_Q^{2^n-1} \},$ where each $h_Q^j$ verifies properties $(1)-(4)$ in Definition \ref{gen haar systems}, and consequently $L^2(\mu)$ is spanned by the Haar basis \[\mathscr{H}=\bigcup_{Q \in \D} V_Q.\] 

Consider any Haar shift of the form \begin{equation} \label{martingale haar shifts} T=\sum_Q T_Q, \qquad  T_Q = \sum_{\substack{J \in \D_s(Q)\\ K \in \D_t(Q)}}\Delta_J T_{J,K} \Delta_K, \end{equation} and $T_{J,K}: \Delta_KL^2(\mu) \to \Delta_JL^2(\mu)$ is uniformly bounded. Expanding the Haar basis we get 
\[\Delta_J T_{J,K} \Delta_K f = \sum_{j,k=1}^{2^n-1} \alpha^T_{j,k} \langle f, h_K^k\rangle h_J^j(x), \qquad \alpha_{j,k}^T:=\ang{T_{J,K}h_K^k,h_J^j} \in \ell^\infty .\]\par 
In other words \[Tf(x)= \sum_{j,k=1}^{2^n-1}T^{j,k}f(x), \qquad  T^{j,k} f(x):=\sum_{Q \in \D} \sum_{\substack{J \in \D_s(Q)\\ K \in \D_t(Q)}} \alpha^T_{j,k} \langle f, h_K^k\rangle h_J^j(x), \]
and it suffices to study $T^{j,k}$ for each $j,k= 1, \dots, 2^n-1.$
This way,  we can see any such Haar shift as a finite sum (depending only on the dimension) of generalized Haar shifts, each corresponding to the generalized Haar system obtained by properly choosing one single Haar function for every dyadic cube. Notice that to study more general martingale operators as in \eqref{martingale haar shifts} we therefore need to require that \[ m(Q) \sim \|h^j_Q\|_{L^1(\mu)} \sim \|h^i_{\widehat{Q}}\|_{L^1(\mu)} \sim m(\widehat{Q}), \qquad \forall \ i,j \in \{1, \dots, 2^n-1\}, Q \in \D. \]
\end{remark}

We now introduce sparse operators adapted to the complexity of the shifts, and we record the best known weighted inequalities in the nonhomogeneous setting.
 \begin{definition}
     Given a sparse family $\mc{S} \subset \D$, $N=s+t\in \N$ and a locally integrable function $f$, we define the sparse form of complexity $N$ as 
     \begin{equation}
         \mc{A}^N_\mc{S}f(x)=\sum_{Q \in \mc{S}}\ang{f}_Q \1_{Q}(x) +\sum_{\substack{ J, K \in \mc{S} \\ \text{dist}(J, K) \leq N+2}}\ang{f}_{J} \frac{\1_K(x)}{\mu(K)}\sqrt{m(J)}\sqrt{m(K)} .
     \end{equation}
 \end{definition}
 We now define adapted weight classes.
 \begin{definition}
  Let $1<p<\infty$ and $N\in\N$. Given cubes $Q,R\in\mathcal D$, we denote
    \begin{equation*}
        c_p^b(Q,R)=\begin{cases}
            1,\textrm{ if }Q=R,\\
            \frac{m(Q)^{p/2}m(R)^{p/2}}{\mu(R)\mu(Q)^{p-1}},\textrm{ otherwise}.
        \end{cases}
    \end{equation*}
We say that a weight $w \in A_p^N(\mu)$ if 
\[ [w]_{A_p^N(\mu)}:=\sup_{\substack{Q,R\in\mathcal D\\0\leq\mathrm{dist}(Q,R)\leq N+2}}c_p^b(Q,R)\ang{w}_Q \ang{\sigma}_R^{p-1} <\infty. \]
 \end{definition}
 Given two balanced pairs $(\mu, \mathscr{H})$ and $(\mu, \widetilde{\mathscr{H}})$, weighted estimates are equivalent in light of \eqref{equiv of balanced constants}.
Although we define complexity-dependent weight characteristics $[W]_{A_p^N}$, the weight \textit{classes} are the same independent of the complexity, even though quantitative weighted estimates depend exponentially on the complexity. They are all unified under the following condition.
\begin{definition}
    Let $1<p<\infty$. We say that $w\in A_p^b(\mu)$ if
    \begin{equation*}
        \sup_{\substack{Q,R\in\mathcal D\\ R\in\mathrm{ch}(\widehat Q)\cup\mathrm{ch}\left(Q^{(2)}\right)\\\text{or } Q\in\textrm{ch}\left(R^{(2)}\right)}}c_p^b(Q,R)\ang{w}_Q \ang{\sigma}_R^{p-1} < \infty.
    \end{equation*}
    where $Q^{(1)}=\widehat Q$ and $Q^{(j)}=\widehat{Q^{(j-1)}}$ for $j\geq 2$.
\end{definition}
\begin{proposition}[\cite{BBDPW}]\label{prop:stability}
    For $1<p<\infty$ and $N\in\N$, we have
    \begin{equation*}
    [w]_{A_p^b(\mu)}\leq[w]_{A_p^N(\mu)}\lesssim\left([w]_{A_p^b(\mu)}\right)^{2^{N-1}}.
    \end{equation*}
    In particular, $A_p^N(\mu)=A_p^M(\mu)$ for all $N,M\in\N$.
\end{proposition}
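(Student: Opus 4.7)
The proposition asserts two inequalities, and I would prove each separately.

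The first inequality $[w]_{A_p^b(\mu)}\leq[w]_{A_p^N(\mu)}$ is essentially definitional. Every pair $(Q,R)$ admissible in the $A_p^b$-supremum is either sibling, parent/child, or cousin, so its dyadic distance is bounded by a small absolute constant (at most $4$). Consequently, each such pair is admissible in the $A_p^N$-supremum for every $N\geq 0$ (up to a handful of low-complexity cases one checks by hand). Since the weight $c_p^b(Q,R)$ defining the supremum is the \emph{same} expression in both characteristics, the bound is immediate.

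The reverse bound $[w]_{A_p^N(\mu)}\lesssim([w]_{A_p^b(\mu)})^{2^{N-1}}$ is the heart of the proposition, and I would prove it by induction on $N$. The base case $N\leq 1$ is direct. For the inductive step, fix a pair $(Q,R)$ with $\mathrm{dist}(Q,R)\leq N+2$ and insert an intermediate cube $S$ on the dyadic chain connecting $Q$ to $R$ (for instance, a common ancestor roughly halfway up), chosen so that both $\mathrm{dist}(Q,S)$ and $\mathrm{dist}(S,R)$ are at most $\lfloor N/2\rfloor+2$. Using the Jensen-type inequality $\ang{w}_S\ang{\sigma}_S^{p-1}\geq 1$—a consequence of $w^{1/p}\sigma^{1/p'}\equiv 1$—one obtains the insertion
\[
\ang{w}_Q\ang{\sigma}_R^{p-1}\leq\bigl(\ang{w}_Q\ang{\sigma}_S^{p-1}\bigr)\bigl(\ang{w}_S\ang{\sigma}_R^{p-1}\bigr).
\]
Combined with the algebraic identity
\[
c_p^b(Q,R)=c_p^b(Q,S)\,c_p^b(S,R)\,\frac{\mu(S)^p}{m(S)^p},
\]
the quantity $c_p^b(Q,R)\ang{w}_Q\ang{\sigma}_R^{p-1}$ decomposes into the product of two analogous quantities at smaller-distance pairs, up to the ``defect'' $\mu(S)^p/m(S)^p$. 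Applying the inductive hypothesis to each of the two factors and multiplying yields the bound $([w]_{A_p^b})^{2^{N-1}}$, since each bisection step doubles the number of accumulated factors.

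The main obstacle is controlling the defect $\mu(S)/m(S)$, which is not bounded a priori: in the non-doubling setting $m(S)=\|h_S\|_{L^1(\mu)}^2$ is comparable to the minimum of the children's measures, while $\mu(S)$ is their sum, so the ratio can be arbitrarily large for highly unbalanced siblings. The balanced condition $m(\widehat Q)\sim m(Q)$ is precisely what is needed to resolve this: by selecting the intermediate cube $S$ carefully along the chain (preferring at each stage to descend into the smaller-measure child, where the balanced condition forces $m(S)\sim\mu(S)$), the defect becomes bounded by a constant depending only on $N$ and the balanced constant, but not on $w$. The recursion terminates after $O(\log N)$ bisection steps and produces the claimed $2^{N-1}$ factors of $[w]_{A_p^b}$. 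The equality $A_p^N(\mu)=A_p^M(\mu)$ for all $N,M\in\N$ then follows at once from the two-sided estimate.
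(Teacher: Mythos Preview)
The paper does not supply its own proof of this proposition; it is quoted from \cite{BBDPW} and stated without argument, so there is no in-paper proof to compare against. I can, however, comment on the internal soundness of your plan.

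There is a genuine gap in the resolution of the defect $\mu(S)^p/m(S)^p$. You write that one can ``prefer at each stage to descend into the smaller-measure child'' so that the balanced condition forces $m(S)\sim\mu(S)$. But the dyadic chain joining $Q$ to $R$ through their least common ancestor is \emph{uniquely determined}: descending from the ancestor toward $R$ you are forced through the child containing $R$, whether or not it is the smaller one. There is no freedom to steer toward small children, so the claimed control on $\mu(S)/m(S)$ is unjustified. In the non-doubling setting this ratio can be arbitrarily large along the prescribed chain, and the balanced hypothesis $m(Q)\sim m(\widehat Q)$ does not by itself bound $\mu(S)/m(S)$.

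There is also an internal inconsistency in the counting. If the defect really were uniformly bounded, your bisection recursion $a_N\lesssim C\,a_{\lfloor N/2\rfloor}^2$ would terminate after $O(\log N)$ steps and yield roughly $N$ leaf pairs at distance $O(1)$, hence only a \emph{polynomial} power $[w]_{A_p^b}^{O(N)}$---strictly better than the stated $2^{N-1}$. Your final sentence claims the recursion ``produces the claimed $2^{N-1}$ factors,'' but bisection with bounded defect cannot produce that many; the exponential exponent in the proposition is precisely the signature of a step-by-step (not bisecting) induction in which the defect is \emph{not} discarded but absorbed at a cost. So either the defect is handled differently than you suggest, or the exponent you arrive at does not match the statement. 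Either way the argument as written does not close.
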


\begin{theorem}[\textup{\cite[Theorem A and Corollary 1.2]{BBDPW}}] \label{sparseforshifts}
Let $\mu$ be an atomless Radon measure in $\R^n$ and $\mathscr{H}$ a generalized Haar system such that the pair $(\mu, \mathscr{H})$ is balanced. Let $f \in L^1(\R^n)$ be compactly supported in $Q_0 \in \D$, and $T$ be a generalized Haar shift of complexity $(s,t)$ as in Definition \ref{def: vectorhaarshift}, with $N=s+t\in \N$. There exists a sparse family $\mathcal{S}= \mathcal{S}(f)\subset \mathcal{D}(Q_0)$ and a positive constant $C=C(n,N,T, \mu, \mathscr{H})$, depending exponentially on the complexity, such that  
\[|Tf(x)| \leq C \mc{A}^N_\mc{S}(|f|)(x) \quad \text{ on } Q_0.\]   
Consequently, if $1<p<\infty$ and $w \in A_p^b(\mu)$ there holds
$$ \|T  \|_{L^p(w) \to L^p(w)} \lesssim [w]_{A^\D_p}^{1 + \frac{1}{p-1} - \frac{2}{p}}[w]_{A_p^N}^{\frac{1}{p}} \lesssim [w]_{A^\D_p}^{1 + \frac{1}{p-1} - \frac{2}{p}}[w]_{A_p^b(\mu)}^{\frac{2^{N-1}}{p}},$$
where the implicit constant depends only on $n, N, p, \mu$ and $\mathscr{H}$. \par 
If $\mu$ is a general Radon measure and $T$ is $L^1$ normalized as in \cref{D: L^1 normalized}, for each $f \in L^1(\R^n;\R^d)$ compactly supported in $Q_0 \in \D$, there exists a sparse family $\mathcal{S}= \mathcal{S}(f)\subset \mathcal{D}(Q_0)$ and a positive constant $C=C(n,N,T)$ depending linearly on the complexity such that

\[|Tf(x)| \leq C \mc{A}_\mc{S}|f|(x) \quad \text{ on } Q_0. \]
Consequently, for every $1<p<\infty$ $w \in A_p^\D(\mu)$ we have \[ \|T\|_{L^p(w) \to L^p(w)} \lesssim_{p,d} [w]_{A_p^\D(\mu)}^{\max \big(1, \frac{1}{p-1}\big)}.\]
\end{theorem}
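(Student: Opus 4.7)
The plan is to mirror the Calderón--Zygmund stopping-time scheme that drove the proof of \cref{thm:SparseDom}, adapted to the bilinear kernel structure of a generalized Haar shift. The key input in both cases is a weak-type $(1,1)$ bound for the maximal truncation $T^\#$. In the balanced case this follows from the $L^2(\mu)$-boundedness of $T$ (granted by Bessel's inequality \eqref{Bessel} and balancedness) together with a Calderón--Zygmund decomposition argument in the spirit of \cref{weak (1,1) max. trunc}; in the $L^1$-normalized case the kernel estimate $\|K_Q\|_\infty \lesssim 1/\mu(Q)$ gives the same bound more cheaply.

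Given the weak $(1,1)$ bound for $T^\#$, reduce to $Q_0 \in \D$ containing $\mathrm{supp}(f)$, split off the "global" part $\sum_{Q \supsetneq Q_0} T_Q f$ (constant on $Q_0$ and controlled by $\langle|f|\rangle_{Q_0}$), and on the local operator $T^{Q_0}$ perform the standard stopping construction: select maximal $Q_j \subsetneq Q_0$ for which either $\langle|f|\rangle_{Q_j} > C_1\langle|f|\rangle_{Q_0}$ or the partial shift sum from $Q_0$ down to $Q_j$ exceeds $C_2\langle|f|\rangle_{Q_0}$. Choosing $C_1,C_2$ large relative to $\|M_\D\|_{L^1\to L^{1,\infty}}$ and $\|T^\#\|_{L^1\to L^{1,\infty}}$ forces $\sum_j \mu(Q_j) \leq \tfrac12\mu(Q_0)$, and the usual downward iteration produces the $\tfrac12$-sparse family $\mathcal S$.

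The genuinely new ingredient is controlling the residual term $T_{\widehat{Q_j}} f$ that survives each descent. Expanding $K_{\widehat{Q_j}}(x,y) = \sum c^{\widehat{Q_j}}_{J,K} h_J(y) h_K(x)$ with $J \in \D_s(\widehat{Q_j})$, $K \in \D_t(\widehat{Q_j})$, and using $\|h_Q\|_{L^\infty} \sim m(Q)^{-1/2}$ (from $\|h_Q\|_{L^2}=1$ together with the standard assumption \eqref{D: standard}) combined with the balanced condition $m(Q) \sim m(\widehat Q)$, one obtains
\[
\left| \int_J K_{\widehat{Q_j}}(x,y)\, f(y)\, d\mu(y) \right| \lesssim \frac{\sqrt{m(J)\,m(K)}}{\mu(K)}\langle |f|\rangle_J, \qquad x \in K.
\]
Diagonal contributions $J=K$ are absorbed into the plain sparse term $\sum_Q \langle|f|\rangle_Q \1_Q$ since the $\sqrt{m}$ factor is $O(1)$ there, while pairs $(J,K)$ with $\mathrm{dist}(J,K) \leq N+2$ assemble into the off-diagonal piece of $\mc{A}^N_{\mc S}|f|$. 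Iterating the estimate $|T^{Q_0}f(x)\1_{Q_j}(x)| \lesssim (\text{boundary terms}) + |T^{Q_j}(f\1_{Q_j})(x)|$ as in \eqref{E: sparse iteration}--\eqref{iteration} yields the pointwise bound $|Tf| \lesssim \mc{A}^N_{\mc S}|f|$ on $Q_0$. In the $L^1$-normalized case the bound $\|K_{\widehat{Q_j}}\|_\infty \lesssim 1/\mu(\widehat{Q_j})$ removes the $\sqrt{m}$ corrections entirely and we land on the ordinary sparse form $\mc{A}_{\mc S}|f|$.

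The weighted bounds then follow by duality: pair $\mc{A}^N_{\mc S}f$ against $g \in L^{p'}(w)$ and apply Hölder. The plain sparse part gives the sharp factor $[w]_{A^\D_p(\mu)}^{1+1/(p-1)-2/p}$ via the classical Hytönen--Pérez--Moen argument, while on each admissible pair $(J,K)$ the off-diagonal piece contributes the factor $[w]_{A^N_p}^{1/p}$, which \cref{prop:stability} converts into $[w]_{A_p^b(\mu)}^{2^{N-1}/p}$. The $L^1$-normalized case yields directly the classical exponent $[w]_{A_p^\D(\mu)}^{\max(1,1/(p-1))}$. The principal difficulty is the careful tracking of the $\sqrt{m}$ factors in the off-diagonal part: the absence of doubling prevents these from being absorbed into a classical sparse bound, and their precise form is exactly what forces both the enlarged sparse family $\mc{A}^N_{\mc S}$ and the adapted, complexity-dependent weight characteristic $A_p^N(\mu)$.
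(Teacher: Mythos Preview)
The paper does not prove this theorem: it is quoted directly from \cite{BBDPW}, as the label ``\cite[Theorem A and Corollary 1.2]{BBDPW}'' indicates, and the only local commentary is the remark immediately following it, which points out that the crux is estimating $T_{\widehat{Q}}f(x)\1_Q(x)$ at a selected cube $Q$. So there is no in-paper proof to compare against; what one can do is compare your sketch to that remark and to the strategy of \cref{thm:SparseDom}, which it explicitly mirrors.

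At that level your outline is the right one and is consistent with what the paper signals: weak $(1,1)$ for $T^\#$, the two stopping rules, and the identification of the residual $T_{\widehat{Q_j}}f$ as the term that forces the modified sparse form. Two points in the sketch are genuinely incomplete, though. First, the cubes $J\in\D_s(\widehat{Q_j})$ and $K\in\D_t(\widehat{Q_j})$ appearing in the residual are fixed-depth descendants of $\widehat{Q_j}$, not stopping cubes; you still owe an explanation of how they become members of $\mathcal S$ with $\mathrm{dist}(J,K)\le N+2$ while preserving sparseness (in practice one augments $\mathcal S$ by the finitely many intermediate generations, at a cost depending on $N$). Second, your displayed estimate for the residual piece asserts control by $\langle|f|\rangle_J$, but the stopping condition only bounds $\langle|f|\rangle_{\widehat{Q_j}}$ and $\langle|f|\rangle_{Q_j}$; for arbitrary $J\subset\widehat{Q_j}$ at depth $s$ there is no immediate control of $\langle|f|\rangle_J$, and the actual argument in \cite{BBDPW} has to thread the averages through the intervening $N$ levels (this is exactly where the exponential dependence on complexity enters in the balanced case). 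The $L^1$-normalized case and the passage to weighted bounds are as you describe.
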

\begin{remark} The previous result was stated in the vector valued setting in \cite{BBDPW}, but the convex body domination argument given recovers pointwise sparse domination in the scalar setting. As we have also seen in the proof of \cref{thm:SparseDom}, sparse domination results for dyadic operators revolve around estimating $T_{\widehat{Q}}f(x)\1_Q(x)$, where $Q$ is a selected cube in the sparse algorithm. In general, it is not possible to control this term with $\ang{|f|}_{\widehat{Q}}$ if $T$ is a Haar shift, and one needs to encompass the complexity of the operator in the modified sparse form, unless the shift is $L^1$ normalized.

For the same reason,  when $N=0$ the result does not recover the usual sparse domination: in the non-homogeneous setting a Haar multiplier $\widetilde{T}$, seen as a zero-complexity operator from Definition \ref{D: dyadic shifts}, is essentially different from a martingale transform of the form \[Tf(x)= \sum_{Q \in \D} c_Q \Delta_Q f(x),\]
which in turn admits usual sparse domination. Indeed, for a martingale transform one has \[|c_Q \Delta_{\widehat{Q}}f(x)\1_Q(x)| \leq \ang{|f|}_{Q}+\ang{|f|}_{\widehat{Q}}\] and the second term is then controlled by the stopping time condition. A similar argument does not work in $\R^n$ for operators as \[\widetilde{T}f(x)= \sum_{Q \in \D} c_Q\ang {f,h_Q}h_Q(x) \]
unless $n=1$ when the two operators coincide.
\end{remark}

\subsection{Improved weighted inequalities for commutators}
We first recall the known weighted inequalities for commutators. The weight class $\widehat{A}_p$ was introduced in \cite{BCAPW} to characterize martingale BMO and to provide a condition that would guarantee a reverse H\"{o}lder inequality. 

\begin{definition}
Let $1<p<\infty$. We say $w \in \widehat{A}_p$ if 
\begin{equation*}
[w]_{\widehat{A}_p(\mu)} := \sup_{\substack{Q \in \D: \\ R \in \{\widehat{Q}, Q, \mathrm{ch}(Q) \}}} \langle w \rangle_{Q} \langle \sigma \rangle_{R}^{p-1} < \infty.
\end{equation*}
\end{definition}
Notice that the argument given in \cite[Proposition 3.6]{BCAPW} adapted to the higher dimensional case $n>1$ yields the estimate $[w]_{A_p^b(\mu)} \lesssim  [w]^4_{\widehat{A_p}(\mu)}$. The following theorem was proved for this weight class:

\begin{theorem} [\cite{BCAPW}]\label{OldWeightedCommutator}
Let $1<p<\infty$, $b \in \BMO$ and $w \in \widehat{A}_p$. Then if $T$ is a generalized Haar shift of complexity $(s,t)$ and $(\mu, \mathscr{H})$ is balanced, then there exists a positive constant $C=C(p,[w]_{\widehat{A}_p},n,N,\mu)$ such that for all $f \in L^p(w)$
$$ \|[T,b]f\|_{L^p(w)} \leq C \|b\|_{\BMO} \|f\|_{L^p(w)}.$$
\end{theorem}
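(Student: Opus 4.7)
The plan is to prove the stated weighted commutator bound via the classical Cauchy integral (conjugation) trick, using the sparse-based weighted estimates for generalized Haar shifts from \cref{sparseforshifts} as a black box. For $z \in \C$, introduce the holomorphic family
\[
F(z) f := e^{-zb}\, T\!\left(e^{zb} f\right).
\]
Then $F(0)f = Tf$ and a direct differentiation at the origin yields $F'(0)f = T(bf) - bT(f) = -[T,b]f$. Consequently, Cauchy's integral formula reads
\[
[T,b] f \;=\; -\frac{1}{2\pi i}\oint_{|z|=\varepsilon} \frac{F(z)f}{z^{2}}\, dz,
\]
and, after taking $L^{p}(w)$ norms and applying Minkowski, the theorem reduces to a uniform estimate $\|F(z)f\|_{L^{p}(w)} \lesssim \|f\|_{L^{p}(w)}$ for all $|z| \leq \varepsilon$, valid for some radius $\varepsilon = \varepsilon(\|b\|_{\BMO}, [w]_{\widehat{A}_{p}(\mu)}) > 0$. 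The contour factor $\oint |z|^{-2}\,|dz| \lesssim \varepsilon^{-1} \lesssim \|b\|_{\BMO}$ then produces precisely the linear dependence on $\|b\|_{\BMO}$ in the conclusion.

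The bound on $F(z)$ is linearized by passing to a conjugated weight. Since
\[
\|F(z) f\|_{L^{p}(w)}^{p} \;=\; \|T(e^{zb} f)\|_{L^{p}(w_{z})}^{p}, \qquad w_{z} := w\, e^{-p\,\mathrm{Re}(z)\, b},
\]
and $\|e^{zb} f\|_{L^{p}(w_{z})} = \|f\|_{L^{p}(w)}$, the task reduces to showing that $\|T\|_{L^{p}(w_{z}) \to L^{p}(w_{z})} \lesssim 1$ uniformly for $|z| \leq \varepsilon$. By \cref{sparseforshifts}, this follows once one proves the stability statement that $w_{z} \in \widehat{A}_{p}(\mu)$ with $[w_{z}]_{\widehat{A}_{p}(\mu)} \leq C_{0}\bigl(\|b\|_{\BMO}, [w]_{\widehat{A}_{p}(\mu)}\bigr)$ uniformly in $z$. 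Concretely, for $r > 1$ chosen close to $1$, Hölder gives
\[
\ang{w\, e^{p\,\mathrm{Re}(z)\, b}}_{Q} \;\leq\; \ang{w^{r}}_{Q}^{1/r}\, \ang{e^{p\, r'\, \mathrm{Re}(z)\, b}}_{Q}^{1/r'},
\]
and analogously for the dual weight $\sigma = w^{1-p'}$. A reverse Hölder inequality for $\widehat{A}_{p}(\mu)$ weights absorbs the first factor, while the John--Nirenberg inequality for $b \in \BMO(\mu)$ makes the second factor uniformly bounded in $Q \in \D$, provided $|z|$ is sufficiently small in terms of $\|b\|_{\BMO}$. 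The same bound must additionally be checked with $Q$ replaced by the dyadic parent $\widehat{Q}$ and by each child in $\mathrm{ch}(Q)$, as mandated by the definition of $\widehat{A}_{p}(\mu)$.

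The main obstacle is precisely the reverse-Hölder self-improvement for $\widehat{A}_{p}(\mu)$ weights in the nonhomogeneous setting: without doubling this is delicate, and controlling averages over dyadic children of $Q$ is exactly the point at which the weaker condition $A_{p}^{b}(\mu)$, which only involves neighboring siblings and parents, would fail. This structural feature justifies the very definition of $\widehat{A}_{p}(\mu)$: the inclusion of $\mathrm{ch}(Q)$ in the supremum provides precisely the extra control needed to run the self-improvement argument. Assuming this stability result in hand, combining the Cauchy representation, the linearization, and the weighted bound from \cref{sparseforshifts} yields the theorem, with a constant $C$ depending on $p$, $[w]_{\widehat{A}_{p}(\mu)}$, $n$, $\mu$, and exponentially on the complexity $N = s+t$ inherited from the weighted estimate for $T$.
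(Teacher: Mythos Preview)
Your approach via the Cauchy integral (conjugation) trick, combining reverse H\"older self-improvement for $\widehat{A}_p(\mu)$ weights with John--Nirenberg for $b \in \BMO(\mu)$, is exactly the strategy the paper attributes to \cite{BCAPW} (the paper itself does not give a proof, only this description). One point to tighten: the factor $\ang{e^{pr'\mathrm{Re}(z)b}}_Q$ is \emph{not} uniformly bounded in $Q$ as written---what John--Nirenberg bounds is the centered quantity $\ang{e^{pr'\mathrm{Re}(z)(b-\ang{b}_{\widehat Q})}}_Q$, and the residual constants $e^{\pm p\,\mathrm{Re}(z)\,\ang{b}_{\widehat Q}}$ from the $w$ and $\sigma$ pieces must be paired so that only $e^{p\,\mathrm{Re}(z)(\ang{b}_{\widehat R}-\ang{b}_{\widehat Q})}$ remains, which is then uniformly bounded precisely because $R \in \{\widehat Q, Q, \mathrm{ch}(Q)\}$ forces $|\ang{b}_{\widehat R}-\ang{b}_{\widehat Q}| \lesssim \|b\|_{\BMO}$.
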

Notice that the proof in \cite{BCAPW} appears in the special case $n=1$, but it can be generalized to every $n \geq 1$ by properly defining balanced pairs as before.
The argument relies on the reverse H\"{o}lder inequality of $w \in \widehat{A_p}(\mu)$ to implement the Cauchy integral trick, while a weight which is merely in the $A^b_p$ class does not have this property. However, using the sparse domination for both Haar shifts and paraproduct forms, we can still deduce weighted inequalities without requiring this property. We now restate and prove \cref{cor:B} as a consequence of the previous estimates.
\begin{theorem} \label{sharp commutator bounds}Suppose $(\mu,\mathscr{H})$ is balanced and $\mu$ is atomless.
Let $1<p<\infty$, $b \in \BMO$, $w \in A_p^{b}(\mu)$ and $T$ a Haar shift of complexity $(s,t)$ with $s+t=N$. Then there exists a positive constant $C=C(p,N,\mu, \mathscr{H},T)$ depending exponentially on $N$ such that for all $f \in L^p(w)$
\begin{equation} \|[T,b]f\|_{L^p(w)} \leq C [w]_{A_p^\D}^{\big(1 + \frac{1}{p-1} - \frac{2}{p}+\max \big(1, \frac{1}{p-1} \big)\big)}[w]_{A_p^b(\mu)}^{\frac{2^{N-1}}{p}}\|b\|_{\BMO} \|f\|_{L^p(w)}. 
\label{ImprovedWeightedCommutator}\end{equation} 
Moreover, if $\mu$ is a Radon measure and $T$ is $L^1$ normalized as in \cref{D: L^1 normalized} we have 
\begin{equation} \|[T,b]f\|_{L^p(w)} \leq C[w]_{A_p^\D(\mu)}^{2\max \big(1, \frac{1}{p-1}\big)}\|b\|_{\BMO} \|f\|_{L^p(w)}. 
\label{WeightedCommutator L1 norm }\end{equation} 
\end{theorem}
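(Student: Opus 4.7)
The plan is to paraproduct-decompose $bf$ via Treil's identity $bf = \Pi_b f + \Delta_b f + \Lambda_b^0 f$, so that
\[
[T,b]f = \bigl(T\Pi_b f - \Pi_b Tf\bigr) + \bigl(T\Delta_b f - \Delta_b Tf\bigr) + \bigl(T\Lambda_b^0 f - \Lambda_b^0 Tf\bigr),
\]
apply the triangle inequality, and estimate each resulting term by composing the $L^p(w)$ operator bounds already available: for $\Pi_b$ and $\Delta_b$ via \cref{thm:SparseDom}, and for $T$ via \cref{sparseforshifts}.

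For the $\Pi_b$ and $\Delta_b$ pieces this is immediate. \cref{thm:SparseDom} delivers
$\|\Pi_b g\|_{L^p(w)},\ \|\Delta_b g\|_{L^p(w)} \leq C\|b\|_{\BMO}[w]_{A_p^\D(\mu)}^{\max(1,1/(p-1))}\|g\|_{L^p(w)}$,
while \cref{sparseforshifts} delivers
$\|Tg\|_{L^p(w)} \leq C[w]_{A_p^\D(\mu)}^{1+1/(p-1)-2/p}[w]_{A_p^b(\mu)}^{2^{N-1}/p}\|g\|_{L^p(w)}$.
Composing these two bounds in either order and summing over the four resulting terms yields exactly the exponents in \eqref{ImprovedWeightedCommutator}.

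The main obstacle, as I expect, is the $\Lambda_b^0$ piece, since $\Lambda_b^0$ is not one of the operators covered by \cref{thm:SparseDom} and, unlike $\Pi_b$ or $\Delta_b$, does not admit an obvious independent $L^p(w)$ bound. The key observation is that $T\Lambda_b^0 - \Lambda_b^0 T$ is itself a generalized Haar shift of the same complexity $(s,t)$ in $\mathscr{H}$. Using the elementary identity $\Lambda_b^0 h_J = \LL b \RR_J h_J$ (which follows from $\Delta_Q h_J = h_J \delta_{Q,J}$ and $\E_Q b = \LL b\RR_Q\1_Q$) together with Haar orthogonality, one computes directly
\[
(T\Lambda_b^0 - \Lambda_b^0 T)f = \sum_{Q \in \D}\sum_{\substack{J \in \D_s(Q)\\ K \in \D_t(Q)}} c_{J,K}^Q\bigl(\LL b \RR_J - \LL b \RR_K\bigr)\LL f,h_J\RR h_K.
\]
Telescoping $\LL b\RR_J - \LL b\RR_Q$ along the chain of at most $s$ parents from $J$ to $Q$, and similarly for $K$, and invoking the BMO bound $\sup_R \|\Delta_R b\|_\infty \lesssim \|b\|_{\BMO}$ recorded in \eqref{BMO norm equiv}, one obtains $|\LL b\RR_J - \LL b\RR_K| \leq N\|b\|_{\BMO}$. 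Dividing through by $N\|b\|_{\BMO}$ produces a bona fide generalized Haar shift in $\mathscr{H}$ of complexity $(s,t)$, so \cref{sparseforshifts} applies. The resulting bound carries only one shift factor (no paraproduct factor), so it is strictly smaller than the $\Pi_b$ and $\Delta_b$ contributions and is harmlessly absorbed.

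For the $L^1$-normalized bound \eqref{WeightedCommutator L1 norm } the same scheme applies verbatim, replacing the weighted shift estimate by the $L^1$-normalized one $[w]_{A_p^\D(\mu)}^{\max(1,1/(p-1))}$ from \cref{sparseforshifts}. The only additional check is that $T\Lambda_b^0 - \Lambda_b^0 T$ inherits $L^1$-normalization: at any point $(x,y)$ with $x \in K_0 \in \D_t(Q)$ and $y \in J_0 \in \D_s(Q)$, only the single term indexed by $(J_0,K_0)$ survives in the kernel sum, so the commutator kernel is pointwise bounded by $N\|b\|_{\BMO}|K_Q(x,y)| \lesssim N\|b\|_{\BMO}/\mu(Q)$. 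The compositional estimate then produces the exponent $2\max(1,1/(p-1))$ of \eqref{WeightedCommutator L1 norm }, with the $\Lambda_b^0$ piece again absorbed.
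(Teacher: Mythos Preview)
Your proof is correct and follows essentially the same strategy as the paper: decompose $[T,b]$ via $bf=\Pi_b f+\Delta_b f+\Lambda_b^0 f$, handle the $\Pi_b$ and $\Delta_b$ commutators by composing the $L^p(w)$ bounds for $T$ from \cref{sparseforshifts} with those for $\Pi_b,\Delta_b$ from \cref{thm:SparseDom}, and observe that $[T,\Lambda_b^0]$ is again a Haar shift of complexity $(s,t)$ with coefficients controlled by $\|b\|_{\BMO}$. You supply more detail than the paper on this last point (the explicit identity $\Lambda_b^0 h_J=\langle b\rangle_J h_J$, the telescoping bound $|\langle b\rangle_J-\langle b\rangle_K|\le N\|b\|_{\BMO}$, and the verification that $L^1$-normalization is inherited), whereas the paper simply asserts the shift structure and refers to \cite{BCAPW} for the one-dimensional computation.
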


\begin{proof} Decompose the commutator as
\begin{equation*}
[T,b]f= [T, \Pi_b]f+[T, \Delta_b]f+ [T, \Lambda_b^0]f.    
\end{equation*}
Notice that, if $T$ is a Haar shift of complexity $(s,t)$, the third term on the right hand side is a Haar shift with at most the same complexity, whose coefficients are bounded by $\|b\|_\BMO$, so the weighted estimates are the same as the weighted estimates for Haar shifts. We refer the reader to \cite{BCAPW} for the computation of the last commutator in the one-dimensional case. 
For the first term, simply write \[\|[T, \Pi_b]\|_{L^p(w) \to L^p(w)}\leq 2\|T\|_{L^p(w) \to L^p(w)}\|\Pi_b\|_{L^p(w) \to L^p(w)},\]
and same holds for the second term. Combining weighted estimates from \cref{sparseforshifts} and \cref{thm:A} yields the result. \end{proof}

\section{Dyadic Hilbert Transform: refined commutator bounds} \label{Section 4} 
In this section we focus on the case $n=1$ and $T= \Hilb$, where the dyadic Hilbert transform $\Hilb$ is defined by its action on Haar functions
\begin{equation}
\Hilb(h_Q)= \text{sign}(Q) h_{Q^s}, \quad Q \in \D.
\end{equation}
Here $h_Q$ is the Haar function associated to $Q$ and adapted to the measure $\mu$, defined as 
\[h_Q(x):= \sqrt{m(Q)}\left(\frac{\1_{Q_+}(x)}{\mu(Q_+)} -\frac{\1_{Q_-}(x)}{\mu(Q_-)}\right); \qquad m(Q):= \frac{\mu(Q_+) \mu(Q_{-})}{\mu(Q)}. \]
The class of measures for which $\Hilb$ extends to a bounded operator on $L^p(\mu)$ is in general strictly larger than the balanced class.
\begin{proposition} [\textup{\cite[Proposition 1.2]{BCAPW}}]
The following are equivalent.
\begin{enumerate}
    \item $\Hilb$ is bounded on $L^p(\mu)$ for all $1<p<\infty$;
    \item $\Hilb$ is bounded on $L^p(\mu)$ for some $p \neq 2$;
    \item $\mu$ is sibling balanced, which means
    $$ [\mu_{sib}]:= \sup_{Q \in \D} \frac{m(Q)}{m(Q^s)}<\infty.$$
    \item $\Hilb$ is weak-type $(1,1).$
\end{enumerate}
\end{proposition}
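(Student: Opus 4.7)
The plan is to close the cycle of implications (1) $\Rightarrow$ (2) $\Rightarrow$ (3) $\Rightarrow$ (4) $\Rightarrow$ (1). The first implication is trivial.

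For (2) $\Rightarrow$ (3), the natural strategy is to test the assumed $L^p$ boundedness of $\Hilb$ against single Haar functions $f=h_Q$ themselves. A direct computation from the definition of $h_Q$ yields
\[
\|h_Q\|_{L^p(\mu)}^p = m(Q)^{p/2}\bigl(\mu(Q_+)^{1-p}+\mu(Q_-)^{1-p}\bigr),
\]
and the elementary identity $m(Q)=\mu(Q_+)\mu(Q_-)/\mu(Q)\sim \min(\mu(Q_+),\mu(Q_-))$ collapses this to $\|h_Q\|_{L^p(\mu)}^p\sim m(Q)^{1-p/2}$. Since $\Hilb h_Q=\pm h_{Q^s}$, the hypothesis forces $m(Q^s)^{1-p/2}\lesssim m(Q)^{1-p/2}$; because $p\neq 2$ the exponent $1-p/2$ is nonzero, and testing against $h_{Q^s}$ as well gives the reverse inequality, so $m(Q)\sim m(Q^s)$ uniformly, i.e.\ sibling balancedness.

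The heart of the argument is (3) $\Rightarrow$ (4), where I would perform the nonhomogeneous Calderón–Zygmund decomposition $f=g+\sum_j b_j$ at height $\lambda$ from \cref{L: CZD}, with each $b_j$ supported in $\widehat{Q_j}$ and of zero mean there. The good part is controlled via the $L^2(\mu)$ boundedness of $\Hilb$, which is free since $\Hilb$ is an isometric permutation of the orthonormal Haar system, combined with Chebyshev. Each bad atom expands as $b_j=\sum_{Q\subseteq\widehat{Q_j}}\langle b_j,h_Q\rangle h_Q$, and applying $\Hilb$ Haar-by-Haar shows that every contribution from strict descendants $Q\subsetneq \widehat{Q_j}$ remains supported inside $\widehat{Q_j}$; only the single top-level coefficient at $Q=\widehat{Q_j}$ produces a term supported on the sibling $\widehat{Q_j}^s$. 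This one escape is where sibling balancedness enters decisively: it forces $\mu(\widehat{Q_j}^s)$ and $m(\widehat{Q_j}^s)$ to be comparable to $\mu(\widehat{Q_j})$ and $m(\widehat{Q_j})$ respectively, so that the exceptional set $\bigcup_j(\widehat{Q_j}\cup\widehat{Q_j}^s)$ has measure controlled by a constant multiple of $\|f\|_{L^1(\mu)}/\lambda$.

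Finally, (4) $\Rightarrow$ (1) follows by Marcinkiewicz interpolation between the free $L^2(\mu)$ bound and the weak-$(1,1)$ bound for the range $1<p\leq 2$, and duality covers $p>2$ since the formal adjoint of $\Hilb$ is $-\Hilb$ (the sign convention satisfies $\mathrm{sign}(Q^s)=-\mathrm{sign}(Q)$), so $\|\Hilb\|_{L^p\to L^p}=\|\Hilb\|_{L^{p'}\to L^{p'}}$. The main obstacle throughout is the bad-part analysis in (3) $\Rightarrow$ (4): in the absence of doubling, a single Haar coefficient at the top of each bad atom escapes outside the support of $b_j$ under $\Hilb$, and no classical CZ argument tames this escape. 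The sibling balanced hypothesis is precisely the minimal regularity on $\mu$ that makes this one escape per atom harmless, which is why it characterizes the boundedness of $\Hilb$ rather than a strictly stronger condition being needed.
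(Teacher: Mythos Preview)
The paper does not prove this proposition at all; it simply quotes it from \cite{BCAPW}. So there is no ``paper's proof'' to compare against, and your attempt must be assessed on its own merits.

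Your implications (1)$\Rightarrow$(2), (2)$\Rightarrow$(3), and (4)$\Rightarrow$(1) are fine: the testing argument on $h_Q$ using $\|h_Q\|_{L^p(\mu)}^p\sim m(Q)^{1-p/2}$ is correct, and interpolation/duality is standard.

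The implication (3)$\Rightarrow$(4) has a genuine gap. You claim that sibling balancedness ``forces $\mu(\widehat{Q_j}^s)$ and $m(\widehat{Q_j}^s)$ to be comparable to $\mu(\widehat{Q_j})$ and $m(\widehat{Q_j})$ respectively, so that the exceptional set $\bigcup_j(\widehat{Q_j}\cup\widehat{Q_j}^s)$ has measure controlled by a constant multiple of $\|f\|_{L^1(\mu)}/\lambda$.'' Both assertions fail. First, sibling balanced only gives $m(Q)\sim m(Q^s)$; it does \emph{not} imply $\mu(Q)\sim\mu(Q^s)$ (take e.g.\ $\mu(I_\pm)=1$ and $\mu(I^s_+)=N$, $\mu(I^s_-)=1/2$: then $m(I)\sim m(I^s)\sim 1/2$ but $\mu(I^s)\gg\mu(I)$). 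Second, even granting a sibling comparison of $\mu$, you would still need $\mu\bigl(\bigcup_j\widehat{Q_j}\bigr)\lesssim\|f\|_1/\lambda$, and this is precisely a doubling-type control that is unavailable here: the parents $\widehat{Q_j}$ can have arbitrarily large measure compared to $Q_j$.

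The fix is to take the exceptional set to be $\bigcup_j Q_j$ (which \emph{does} have measure $\le\|f\|_1/\lambda$) and to control the escape \emph{in $L^1$}, not by measure of support. A closer look at the Haar expansion of $b_j$ shows $\langle b_j,h_R\rangle=0$ for $R\subseteq Q_j^s$, so there are exactly two escape terms: $\langle b_j,h_{Q_j}\rangle h_{Q_j^s}$ and $\langle b_j,h_{\widehat{Q_j}}\rangle h_{\widehat{Q_j}^s}$. For either one,
\[
\bigl|\langle b_j,h_R\rangle\bigr|\,\|h_{R^s}\|_{L^1(\mu)}\lesssim \|b_j\|_{L^1(\mu)}\,\|h_R\|_\infty\,\|h_{R^s}\|_{L^1(\mu)}\sim \|b_j\|_{L^1(\mu)}\,\frac{m(R^s)^{1/2}}{m(R)^{1/2}}\sim\|b_j\|_{L^1(\mu)},
\]
and \emph{this} is where sibling balancedness enters. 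Summing in $j$ and applying Chebyshev then finishes the weak-type bound. Your heuristic that ``one escape per atom is rendered harmless by sibling balance'' is morally right, but the mechanism is an $L^1$ bound on the escape, not a measure bound on its support.
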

In the same spirit, if one is concerned with $L^p(w)$ estimates for the operator $\Hilb$ \emph{alone}, one can assume a weaker condition on the weight $w$ than what assumed before, and \cref{sharp commutator bounds} allows us to get sharper weighted inequalities.

\begin{definition}[\textup{\cite[Appendix A.2]{BCAPW}}] \label{SibBalancedWeights}
Let $1<p<\infty$. A weight $w \in A_p^{sib}(\mu)$ if 
    \begin{equation*}
        [w]_{A_{p}^{sib}(\mu)}:=\sup_{Q, R\in \D}c_{p}(Q,R)\langle w\rangle_{Q} \langle \sigma\rangle_R^{p-1} <\infty,
    \end{equation*}
    where 
    \begin{equation*}\label{defconstants}
        c_p(Q,R)=\begin{cases}
            1, \text{ if }Q=R,\\
              \left(\frac{m(\widehat{Q})}{\mu(R)}\right)^{p-1}\frac{m(\widehat{R})}{\mu(R)} , \text{ if }\widehat{Q}=(\widehat{R})^s,\\
              \left(\frac{m(Q)}{\mu(Q)}\right)^{p-1}\frac{m(\widehat{R})}{\mu(R)} , \text{ if }Q=(\widehat{R})^s,\\
             0, \text{ for any other case}.
        \end{cases}
    \end{equation*}
    
\end{definition}

Even assuming that the measure is merely sibling balanced, $\Hilb$ still admits a modified sparse domination. 
If $\mathcal{S}$ is a sparse family and $f \in L^\infty_{\mathrm{loc}}$, we define
\begin{equation*}
         \begin{split}
\mathcal{E}^{\mathcal{S}}_1(f)(x):=&\sum_{\substack{Q,R \in \mathcal{S}\\ \widehat{Q}=(\widehat{R})^{s}}} \langle f \rangle_Q \,\frac{m(\widehat{Q})^{1/2} m(\widehat{R})^{1/2}}{\mu(R)} \textbf{1}_R(x),\\
\mathcal{E}^{\mathcal{S}}_2(f)(x):=&\sum_{\substack{Q,R\in \mathcal{S}\\ {Q}=(\widehat{R})^{s}}}\ \langle f\rangle_Q  \,\frac{m(Q)^{1/2} m(\widehat{R})^{1/2}}{\mu(R)} \textbf{1}_R(x),\\
\mathcal{E}_{\mathcal{S}}(f)(x):=& \mathcal{A}_{\mathcal{S}}(f)(x)+\sum_{j=1}^2 \mathcal{E}^{\mathcal{S}}_j(f)(x).
         \end{split}
    \end{equation*}

\begin{remark}
The careful reader will notice that in Definition \ref{SibBalancedWeights}, the configuration $(Q,R)$ of intervals satisfying $R=(\widehat{Q})^s$ has been removed. This symmetrization is unavoidable in the bilinear setting, where stopping conditions are imposed on two functions simultaneously. It does not arise, however, if one runs the pointwise sparse domination argument via weak-type estimates. One needs to control a term like $\langle f, h_Q \rangle h_{Q^s}$, and there is never a need to replace the characteristic functions $\mathbf{1}_{(Q^s)_{-}}$ and $\mathbf{1}_{(Q^s)_{+}}$ by the characteristic function of the parent interval. Therefore, the assumption on the weight class can actually be slightly weakened from the version in \cite{BCAPW}.
    
\end{remark}
\begin{theorem}[\textup{\cite[Theorem A.2]{BCAPW}}]\label{sparseresultforH}
    If $\mu$ is sibling balanced and atomless, there exists $\eta\in (0,1) $ such that for each $L^1$ function $f$ compactly supported on $Q_0 \in \D$,  there exists an $\eta$-sparse collection $\mathcal{S}\subset \D$ such that for $\mu$ a.e. $x \in Q_0$,
    $$|\Hilb f (x)|\lesssim \mathcal{E}_{\mathcal{S}}(|f|)(x).
    $$
    Moreover, for $1<p<\infty$, any $\eta$-sparse collection $\mathcal{S}$, $w \in A_p^{sib}(\mu)$, there exists $C=C(p, \mu, \Hilb)$ such that for any $f \in L^p(w)$
$$ \|\mathcal{E}_{\mathcal{S}}(|f|)\|_{L^p(w)} \leq C(p)[w]_{A_p}^{1 + \frac{1}{p-1} - \frac{2}{p}}[w]_{A_p^{sib}}^{\frac{1}{p}} \|f\|_{L^p(w)}.$$
\end{theorem}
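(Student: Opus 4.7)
The plan is to establish the pointwise sparse domination first, then to deduce the weighted bound by duality.

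For the sparse domination, I would adapt the Lerner-type stopping-time blueprint from the proof of \cref{thm:SparseDom}, tailored to the sibling shifts intrinsic to $\Hilb$. First, establish the weak-$(1,1)$ bound for the maximal truncation $\Hilb^\#$: since $\mu$ is sibling balanced, $\Hilb$ is of weak type $(1,1)$, and a standard dyadic Cotlar inequality of the form $\Hilb^\# f \lesssim M_{\D}(\Hilb f) + M_{\D} f$ transfers this to $\Hilb^\#$. Fix $f \in L^1(\mu)$ supported in $Q_0$ and let $\mathcal B(Q_0)$ be the maximal $Q \subsetneq Q_0$ for which either $\ang{|f|}_Q > C\ang{|f|}_{Q_0}$ or $|\Hilb^{Q_0}(f\1_{Q_0})(x)| > C\ang{|f|}_{Q_0}$ on $Q$; with $C$ chosen via the two weak-$(1,1)$ bounds, $\sum_{Q \in \mathcal B(Q_0)}\mu(Q) \leq \tfrac{1}{2}\mu(Q_0)$, and iterating produces a $\tfrac{1}{2}$-sparse family $\mathcal S$.

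The algebraic heart of the argument is the Haar-level decomposition at each selected $Q \in \mathcal B(Q_0)$. Expanding $f\1_{Q_0}$ in the Haar basis and using $\Hilb h_R = \mathrm{sign}(R)h_{R^s}$, the terms with $R \subsetneq Q$ reproduce $\Hilb(f\1_Q)(x)$, terms with $R \supseteq Q$ vanish on $Q$ (since $h_{R^s}$ lives on $R^s$, disjoint from $R \ni x$), and the remaining contributions are indexed by $R = S^s$ where $S$ is an ancestor of $Q$ strictly inside $Q_0$, yielding
\[
\Hilb(f\1_{Q_0})(x) - \Hilb(f\1_Q)(x) = \sum_{S:\, Q \subseteq S \subsetneq Q_0} \mathrm{sign}(S^s)\, \ang{f, h_{S^s}}\, h_S(x), \quad x \in Q.
\]
Using $|h_S(x)| \lesssim \sqrt{m(S)}/\mu(Q)$ on the child of $S$ containing $Q$, together with $|\ang{f, h_{S^s}}| \lesssim \sqrt{m(S^s)}\bigl(\ang{|f|}_{(S^s)_+} + \ang{|f|}_{(S^s)_-}\bigr)$ from the Haar formula, the $S = Q$ and $S = \widehat Q$ contributions match the $\mathcal E_1^\mathcal S, \mathcal E_2^\mathcal S$ forms with the appropriate pairings of sparse evaluation cube and sparse source cube. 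Contributions from higher ancestors $S = \widehat{\widehat Q}, \ldots$ are then absorbed into $\mathcal A_\mathcal S(|f|)$ and the two boundary terms via a geometric-series summation, using sibling balancedness $m(S) \sim m(S^s)$ to keep $\sqrt{m(S)m(S^s)}$ comparable across nested scales, and the non-selection of intermediate ancestors to bound averages on their siblings by $\ang{|f|}_{Q_0}$. Iterating on each $Q \in \mathcal B(Q_0)$ yields the pointwise bound $|\Hilb f| \lesssim \mathcal E_\mathcal S(|f|)$.

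For the weighted estimate, write $\mathcal E_\mathcal S = \mathcal A_\mathcal S + \mathcal E_1^\mathcal S + \mathcal E_2^\mathcal S$. The standard sparse operator satisfies $\|\mathcal A_\mathcal S\|_{L^p(w) \to L^p(w)} \lesssim [w]_{A_p^\D}^{\max(1, 1/(p-1))}$, which is dominated by the claimed mixed constant since $[w]_{A_p^\D} \leq [w]_{A_p^{sib}}$. For $\mathcal E_j^\mathcal S$, dualize against $g \in L^{p'}(\sigma)$ with $\sigma = w^{1-p'}$: the bilinear form becomes a Carleson-type sum over sparse pairs $(Q, R)$ in the prescribed configurations, weighted by $\sqrt{m(\cdot)m(\cdot)}/\mu(R)$. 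The coefficient $c_p(Q,R)$ in the definition of $A_p^{sib}$ is tailored precisely so that Hölder in each pair produces the coupling $(c_p(Q,R)\ang{w}_Q\ang{\sigma}_R^{p-1})^{1/p} \leq [w]_{A_p^{sib}}^{1/p}$, and a Carleson-embedding argument adapted to sparse families - using the $A_p^\D$ testing conditions on the dual weight - sums the remaining factor to give $[w]_{A_p^\D}^{1 + 1/(p-1) - 2/p}$.

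The main obstacle is the intermediate-ancestor collapse: reducing the telescoping sum over all $S$ with $Q \subseteq S \subsetneq Q_0$ to only the two boundary terms $\mathcal E_1^\mathcal S, \mathcal E_2^\mathcal S$ plus a sparse tail hinges essentially on sibling balancedness. Without $[\mu_{sib}] < \infty$ the factors $\sqrt{m(S)m(S^s)}$ are incomparable across nested scales and the geometric series fails to close, which simultaneously explains the necessity of the hypothesis and the presence of two distinct modified sparse forms rather than a single boundary term.
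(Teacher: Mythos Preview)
The paper does not prove this statement: it is quoted as \cite[Theorem A.2]{BCAPW}, with only the remark that the bilinear formulation there can be upgraded to a pointwise one by running the argument via weak-type estimates. Your blueprint (weak $(1,1)$ for $\Hilb^\#$, Lerner-type stopping as in \cref{thm:SparseDom}) is indeed the route the paper points to, and it is the approach of \cite{CPW,BCAPW}.

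That said, your execution misidentifies the obstruction. Once you impose the second stopping condition, the sum over \emph{all} intermediate ancestors $Q_j \subsetneq S \subsetneq Q_0$ is exactly $\sum_{\widehat{Q_j} \subsetneq R \subseteq Q_0}\Hilb_R f(x)$ and is already controlled by $\ang{|f|}_{Q_0}$ via the maximal-truncation bound; there is no geometric series to sum, and sibling balancedness would not provide one in any case (it gives $m(S)\sim m(S^s)$, not decay across nested scales). The single genuinely problematic term is the boundary contribution $\ang{f,h_{Q_j^s}}h_{Q_j}(x)$, which in the non-doubling setting cannot be absorbed into $\ang{|f|}_{Q_0}$. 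Expanding $h_{Q_j^s}$ and $h_{Q_j}$ produces averages over the \emph{children} of $Q_j^s$ and indicators of the \emph{children} of $Q_j$, matching the configurations in $\mathcal E_1^{\mathcal S}$ and $\mathcal E_2^{\mathcal S}$; but those cubes are not in your $\mathcal S$. A step you omit---and which is essential in \cite{BCAPW}---is to augment the sparse family by adjoining, for each selected $Q_j$, the cubes $(Q_j)_\pm$, $(Q_j^s)_\pm$, and $Q_j^s$, and to check that the enlarged collection remains sparse (it does, with a worse but fixed $\eta$, because each selected cube spawns boundedly many neighbors of comparable aggregate measure). Your weighted-estimate sketch via duality and Carleson embedding is the right shape, but the details of how the two-weight testing condition interacts with the off-diagonal coefficients $c_p(Q,R)$ are where the exponents $1+\tfrac{1}{p-1}-\tfrac{2}{p}$ and $\tfrac{1}{p}$ actually emerge, and that computation is not indicated.
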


As before, the result was stated in the bilinear sense in \cite{BCAPW} but can be improved to a pointwise sparse domination.
\begin{corollary} Suppose $\mu$ is sibling balanced and atomless.
Let $1<p<\infty$, $b \in \BMO$ and $w \in A_p^{sib}$. Then there exists a constant $C=C(p,\mu, \Hilb)>0$ such that for all $f \in L^p(w)$
\begin{equation}\|[\mathcal{H},b]f\|_{L^p(w)} \leq C(p) [w]_{A^\D_p(\mu)}^{\big(1 + \frac{1}{p-1} - \frac{2}{p}+\max \big(1, \frac{1}{p-1} \big)\big)}[w]_{A_p^{sib}(\mu)}^{\frac{1}{p}}\|b\|_{\BMO} \|f\|_{L^p(w)}. \label{ImprovedWeightedCommutator2}\end{equation} 
\end{corollary}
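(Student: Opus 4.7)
The plan is to mirror the proof of Theorem \ref{sharp commutator bounds} but to substitute Theorem \ref{sparseresultforH} for Theorem \ref{sparseforshifts}, so that the weighted control on $\Hilb$ itself is dictated by the (strictly weaker) $A_p^{sib}(\mu)$ hypothesis with exponent $\tfrac{1}{p}$ rather than the exponent $\tfrac{2^{N-1}}{p}$ on $[w]_{A_p^b(\mu)}$.

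I would start from the paraproduct decomposition
$$[\Hilb, b]f \;=\; [\Hilb, \Pi_b]f \;+\; [\Hilb, \Delta_b]f \;+\; [\Hilb, \Lambda_b^0]f,$$
which is immediate from $bf = \Pi_b f + \Delta_b f + \Lambda_b^0 f$. For the last piece I would invoke the observation used already in the proof of Theorem \ref{sharp commutator bounds}: since $\Lambda_b^0 = \sum_Q \langle b\rangle_Q \Delta_Q$ is a Haar multiplier, composition with $\Hilb$ produces a generalized Haar shift of the same complexity as $\Hilb$ whose $\ell^\infty$ coefficients are controlled by $\|b\|_\BMO$ via the telescoping identity for $\langle b\rangle_Q - \langle b\rangle_{Q^s}$ combined with $\|\Delta_R b\|_\infty \lesssim \|b\|_\BMO$ from \eqref{BMO norm equiv}. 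Theorem \ref{sparseresultforH} then yields
$$\|[\Hilb, \Lambda_b^0]f\|_{L^p(w)} \;\lesssim\; \|b\|_\BMO\,[w]_{A_p^\D(\mu)}^{1+\frac{1}{p-1}-\frac{2}{p}}[w]_{A_p^{sib}(\mu)}^{\frac{1}{p}}\|f\|_{L^p(w)}.$$

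For the remaining two pieces I would bound by submultiplicativity,
$$\|[\Hilb, \Pi_b]\|_{L^p(w)\to L^p(w)} \;\leq\; 2\,\|\Hilb\|_{L^p(w)\to L^p(w)}\,\|\Pi_b\|_{L^p(w)\to L^p(w)},$$
and similarly for $[\Hilb, \Delta_b]$. The first factor is controlled by Theorem \ref{sparseresultforH}, while the second is controlled by the corollary to Theorem \ref{thm:SparseDom} with the exponent $\max(1, \tfrac{1}{p-1})$ on $[w]_{A_p^\D(\mu)}$. Since $A_p^{sib}(\mu)\subset A_p^\D(\mu)$ (the case $Q=R$ in the definition of $A_p^{sib}(\mu)$ recovers $A_p^\D(\mu)$), these paraproduct estimates apply without adjustment; multiplying yields the exponent $1 + \tfrac{1}{p-1} - \tfrac{2}{p} + \max\!\big(1,\tfrac{1}{p-1}\big)$ on $[w]_{A_p^\D(\mu)}$ and $\tfrac{1}{p}$ on $[w]_{A_p^{sib}(\mu)}$. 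Since $\tfrac{2}{p}\leq 2\max(1,\tfrac{1}{p-1})$, this bound dominates the estimate for $[\Hilb,\Lambda_b^0]$, producing the stated inequality.

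The only step requiring care is the verification that $[\Hilb, \Lambda_b^0]$ falls under the scope of Theorem \ref{sparseresultforH} with coefficients of size $\|b\|_\BMO$. Because $\Hilb$ simply permutes the Haar basis up to a sign ($\Hilb h_Q = \mathrm{sign}(Q) h_{Q^s}$), this reduces to an explicit kernel computation: expanding $\Hilb\Lambda_b^0 - \Lambda_b^0\Hilb$ in the Haar basis produces a complexity-$(0,1)$ shift whose coefficients are exactly the differences $\langle b\rangle_Q - \langle b\rangle_{Q^s}$, which are $\ell^\infty$-bounded by $\|\Delta_{\widehat{Q}}b\|_\infty\lesssim \|b\|_\BMO$. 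Once this is in place, the rest is a direct product of weighted inequalities already established in the paper.
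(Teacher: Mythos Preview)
Your proposal is correct and follows exactly the approach the paper has in mind: the corollary is stated without proof because it is the verbatim transplant of the proof of Theorem~\ref{sharp commutator bounds}, replacing the weighted bound from Theorem~\ref{sparseforshifts} by the sibling-balanced bound from Theorem~\ref{sparseresultforH}. Two minor remarks: the commutator $[\Hilb,\Lambda_b^0]$ sends $h_Q$ to a multiple of $h_{Q^s}$, so it has complexity $(1,1)$, not $(0,1)$; and Theorem~\ref{sparseresultforH} is literally stated only for $\Hilb$, but since $[\Hilb,\Lambda_b^0]$ has the identical structure $h_Q\mapsto c_Q h_{Q^s}$ with $|c_Q|\lesssim\|b\|_\BMO$, the same modified sparse domination and weighted bound apply to it without change.
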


The next subsections are concerned with proving \cref{thm:D}.

\subsection{$L^p$ boundedness of $[\Hilb,b]$: necessary and sufficient conditions.} 

Define for $1<p<\infty$ \[ {[\BMO]}_p(\mu):= \{ b \in \bmo_p(\mu): \|[b,\Hilb]\|_{L^p(\mu) \to L^p(\mu)}< \infty \}.\]

The following has been proved in \cite{BCAPW}.
\begin{theorem}
Let $\mu$ be a sibling balanced measure, $1<p<\infty$ and $b \in \BMO(\mu)$. Then 
\[\|[\Hilb,b]\|_{L^p(\mu) \to L^p(\mu)} \lesssim \|b\|_\BMO.\]
Moreover, we have that 
\begin{equation} \label{CP and comm norm}  \|b\|_{\bmo_p} \leq \|[b, \Hilb]\|_{L^p(\mu) \to L^p(\mu)}. \end{equation}
\end{theorem}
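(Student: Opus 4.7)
My plan splits into the upper and lower bounds.

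\textbf{Upper bound.} The inequality $\|[\Hilb,b]\|_{L^p(\mu)\to L^p(\mu)}\lesssim\|b\|_{\BMO}$ is an immediate consequence of \cref{cor:C} applied with the trivial weight $w\equiv 1$: under the sibling balanced hypothesis, $w\equiv 1$ has bounded characteristics in both $A_p^{\D}(\mu)$ and $A_p^{sib}(\mu)$, so the weighted estimate there collapses to the desired unweighted bound. Alternatively, one could decompose $[\Hilb,b]=[\Hilb,\Pi_b]+[\Hilb,\Delta_b]+[\Hilb,\Lambda_b^0]$ via the paraproduct identity $bf=\Pi_bf+\Delta_bf+\Lambda_b^0f$ and combine the $L^p$ boundedness of the three paraproducts (\cref{thm:SparseDom}) with the $L^p$ boundedness of $\Hilb$ (\cref{sparseresultforH}).

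\textbf{Lower bound.} For $\|b\|_{\bmo_p(\mu)}\lesssim\|[\Hilb,b]\|_{L^p\to L^p}$, the strategy is a duality-testing argument exploiting $\Hilb^\ast=-\Hilb$ together with $\Hilb^2=-I$, which give $\Hilb^\ast\Hilb=I$. Fix $Q\in\D$. By $L^p$--$L^{p'}$ duality and the standard mean-zero reduction (replacing any test function $g\in L^{p'}(\mu)$ supported in $Q$ by $g-\langle g\rangle_Q\mathbf{1}_Q$, at a cost of at most a factor of $2$ in the $L^{p'}$ norm and without affecting the pairing with $(b-\langle b\rangle_Q)\mathbf{1}_Q$),
\[
\|(b-\langle b\rangle_Q)\mathbf{1}_Q\|_{L^p(\mu)}\lesssim\sup\left\{\left|\int bg\,d\mu\right|:\mathrm{supp}\,g\subseteq Q,\ \int_Q g\,d\mu=0,\ \|g\|_{L^{p'}(\mu)}\leq 1\right\}.
\]
For any such $g$, the identity $\Hilb^\ast\Hilb=I$ and the decomposition $\Hilb(b\mathbf{1}_Q)=[\Hilb,b]\mathbf{1}_Q+b\Hilb\mathbf{1}_Q$ yield
\[
\int bg\,d\mu=\int(\Hilb g)\,\Hilb(b\mathbf{1}_Q)\,d\mu=\int(\Hilb g)\,[\Hilb,b]\mathbf{1}_Q\,d\mu+\int(\Hilb g)\,b\,\Hilb\mathbf{1}_Q\,d\mu.
\]

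The crucial step, and the main obstacle, is to show the second integral on the right vanishes by a support-disjointness argument at the Haar level. Because $g$ is supported in $Q$ with zero mean on $Q$, its Haar coefficients $\langle g,h_R\rangle$ vanish for every $R\supsetneq Q$ (as $h_R$ is constant on $Q$), so $\Hilb g=\sum_{R\in\D(Q)}\mathrm{sign}(R)\langle g,h_R\rangle h_{R^s}$ is supported in $\widehat Q=Q\cup Q^s$. On the other hand, $\langle\mathbf{1}_Q,h_R\rangle=0$ for every $R\subseteq Q$ (since $h_R$ has zero mean on its support $R\subseteq Q$), so the Haar expansion of $\mathbf{1}_Q$ only involves strict ancestors: $\mathbf{1}_Q=\mu(Q)\sum_{R\supsetneq Q}h_R|_Q\,h_R$, whence $\Hilb\mathbf{1}_Q$ is supported in $\bigcup_{R\supsetneq Q}R^s$. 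Each $R^s$ in this union is disjoint from $R\supseteq\widehat Q$, so this support is disjoint from $\widehat Q$, and therefore $\Hilb g\cdot b\,\Hilb\mathbf{1}_Q\equiv 0$ pointwise. H\"older's inequality together with the $L^{p'}$-boundedness of $\Hilb$ then gives
\[
\left|\int bg\,d\mu\right|=\left|\int(\Hilb g)\,[\Hilb,b]\mathbf{1}_Q\,d\mu\right|\leq\|\Hilb\|_{L^{p'}\to L^{p'}}\|[\Hilb,b]\|_{L^p\to L^p}\,\mu(Q)^{1/p},
\]
and dividing by $\mu(Q)^{1/p}$ and taking the supremum over $Q\in\D$ concludes the proof.
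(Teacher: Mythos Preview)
The paper does not prove this theorem itself; it is quoted as a result from \cite{BCAPW}, so there is no internal argument to compare against directly. Your proposal is nonetheless correct.

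For the upper bound, invoking \cref{cor:C} with $w\equiv 1$ (or running the paraproduct decomposition together with \cref{thm:SparseDom} and \cref{sparseresultforH}) is exactly the mechanism the paper uses to obtain the weighted commutator estimate in \cref{sharp commutator bounds} and its sibling-balanced analogue, so this is both non-circular and entirely in the spirit of the paper. For the lower bound, your argument via the isometry $\Hilb^\ast\Hilb=I$ and the Haar-level support disjointness of $\Hilb g$ and $\Hilb\mathbf{1}_Q$ is clean and goes through as written: the key facts---that a mean-zero $g$ supported in $Q$ has Haar coefficients only on $\D(Q)$ so $\Hilb g$ is supported in $\widehat Q$, while $\mathbf{1}_Q$ has Haar coefficients only on strict ancestors $R\supsetneq Q$ so $\Hilb\mathbf{1}_Q$ is supported in $\bigcup_{R\supsetneq Q}R^s\subseteq\widehat Q^{\,c}$---are verified exactly as you describe. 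Two small remarks: your bound comes with an implicit constant $\lesssim\|\Hilb\|_{L^{p'}\to L^{p'}}$ (depending on the sibling-balanced constant of $\mu$) rather than the literal $\leq$ in the displayed inequality; and the identity $\langle b\mathbf{1}_Q,g\rangle=\langle\Hilb(b\mathbf{1}_Q),\Hilb g\rangle$ requires $b\mathbf{1}_Q\in L^2(\mu)$, which is guaranteed by the standing hypothesis $b\in\BMO(\mu)$.
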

The previous theorem says that \begin{equation} \label{containment of bmo spaces}\BMO(\mu) \subseteq [\BMO]_p(\mu) \subseteq \bmo_p(\mu), \quad 1<p<\infty.\end{equation}
We now give a precise characterization of $[\BMO]_p(\mu).$
\begin{theorem}\label{T: nec and suff cond}

Let $b$ be locally integrable, $1 < p < \infty $, and $\mu$ sibling balanced. The commutator $[\Hilb, b]$ extends to a bounded operator on $L^p(\mu)$ if and only if the following conditions are satisfied:

\begin{enumerate}
    \item \label{Nec 1} The symbol $b \in \bmo_{\alpha(p)}(\mu)$, where $\alpha(p)=\max(p,p')$;
    \item \label{Nec 2} The sequence $\beta=\{\beta_Q\}_{Q \in \D}$ with $\beta_Q= c_Q-c_{Q^s}$ and $c_Q= \langle b, h_Q^2 \rangle$ satisfies $\|\beta\|_{\ell^{\infty}}<\infty$.
\end{enumerate}
In other words for $1< p < \infty$ and $\alpha(p):=\max(p,p')$
\[ [\BMO]_p(\mu)=\{b \in \bmo_{\alpha(p)}(\mu), \beta \in \ell^\infty \}.\]

\end{theorem}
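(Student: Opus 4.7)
The plan is to exploit the paraproduct identity $bf = \Pi_b f + \Pi_b^* f + \Lambda_b f$ to split
\[
[\Hilb, b] = [\Hilb, \Pi_b] + [\Hilb, \Pi_b^*] + [\Hilb, \Lambda_b].
\]
Under the sibling balanced assumption, $\Hilb$ is $L^p(\mu)$-bounded for all $1 < p < \infty$, and by \cref{L^p bound of paraprod} $\Pi_b$ is $L^p(\mu)$-bounded exactly when $b \in \bmo_p$; by duality, $\Pi_b^*$ is $L^p(\mu)$-bounded exactly when $b \in \bmo_{p'}$. Hence condition $(1)$, namely $b \in \bmo_{\alpha(p)} = \bmo_p \cap \bmo_{p'}$, will dispose of the first two pieces and reduce sufficiency to the study of $[\Hilb, \Lambda_b]$.

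The crucial step, and the main payoff of working in one dimension, will be to identify $\Lambda_b$ as a Haar multiplier. Since $\Delta_Q L^2(\mu)$ is spanned by $h_Q$, every $g$ satisfies $\Delta_Q g = \langle g, h_Q\rangle h_Q$, so that
\[
\Delta_Q(b\,\Delta_Q f) = \langle f, h_Q\rangle \langle b h_Q, h_Q\rangle h_Q = c_Q \langle f, h_Q\rangle h_Q,
\]
which gives $\Lambda_b f = \sum_Q c_Q \Delta_Q f$. Using $\Hilb h_Q = \mathrm{sign}(Q) h_{Q^s}$, a direct computation then produces
\[
[\Hilb, \Lambda_b] f = \sum_Q \mathrm{sign}(Q)\,\beta_Q \langle f, h_Q\rangle h_{Q^s} = \Hilb(M_\beta f),
\]
where $M_\beta f := \sum_Q \beta_Q \Delta_Q f$ is the martingale transform with symbol $\beta$. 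Burkholder's inequality gives $\|M_\beta\|_{L^p(\mu) \to L^p(\mu)} \lesssim \|\beta\|_\infty$, so condition $(2)$ delivers the desired $L^p(\mu)$-bound and completes sufficiency.

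For necessity, $(1)$ follows by combining \eqref{CP and comm norm} with the duality identity $\|[\Hilb, b]\|_{L^p \to L^p} = \|[\Hilb, b]\|_{L^{p'} \to L^{p'}}$, which is a consequence of $[\Hilb, b]$ being self-adjoint (since $\Hilb^* = -\Hilb$): applying the lower bound at both exponents gives $b \in \bmo_p \cap \bmo_{p'}$. For $(2)$, I will test the commutator on the sibling Haar pair $(h_Q, h_{Q^s})$, obtaining the clean identity
\[
\langle [\Hilb, b] h_Q, h_{Q^s}\rangle = \mathrm{sign}(Q)\,\beta_Q.
\]
Riesz--Thorin interpolation between $L^p$ and $L^{p'}$ then yields $\|[\Hilb, b]\|_{L^2 \to L^2} \leq \|[\Hilb, b]\|_{L^p \to L^p}$, and Cauchy--Schwarz applied to the pairing above, using $\|h_Q\|_{L^2(\mu)} = \|h_{Q^s}\|_{L^2(\mu)} = 1$, produces $|\beta_Q| \leq \|[\Hilb, b]\|_{L^p \to L^p}$ uniformly in $Q \in \D$.

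The main conceptual obstacle, and the source of the $p$-dependent phenomenon, lies in choosing the correct paraproduct decomposition. The competing identity $bf = \Pi_b f + \Delta_b f + \Lambda_b^0 f$ would route the argument through $\Delta_b$, which by \cref{L^p bound of paraprod} is $L^p$-bounded only under the strictly stronger condition $b \in \BMO$; the decomposition with $\Lambda_b$ chosen above instead isolates the genuinely new obstruction as the Haar symbol $c_Q = \langle b, h_Q^2\rangle$, whose sibling difference $\beta_Q$ governs the off-diagonal interaction that doubling would otherwise tame. Finally, the inclusions $\BMO(\mu) \subseteq [\BMO]_p(\mu) \subseteq \bmo_p(\mu)$ are immediate from the existing $\BMO$ boundedness result and from $\bmo_{\alpha(p)} \subseteq \bmo_p$; the strictness of both inclusions will be witnessed by explicit examples to be constructed in the remainder of the section.
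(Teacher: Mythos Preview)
Your proof is correct and follows essentially the same approach as the paper: the same paraproduct splitting, the same identification of $\Lambda_b$ as the Haar multiplier with symbol $c_Q$, and the same duality argument for necessity of (1). The only variation is in the necessity of (2): you test the full commutator $[\Hilb,b]$ on the pairing $\langle [\Hilb,b]h_Q,h_{Q^s}\rangle$ and interpolate to $L^2$, whereas the paper first isolates $[\Hilb,\Lambda_b]$ as bounded on $L^p$ and tests it on $h_Q$, invoking sibling balance to control $\|h_Q\|_p/\|h_{Q^s}\|_p$; your route is in fact slightly cleaner since the $L^2$ interpolation makes the Haar norm ratio trivially equal to $1$.
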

\begin{remark}
 In the case $p=2$ the first condition in \cref{T: nec and suff cond} is the usual Carleson condition. Also, if the measure $\mu$ is dyadically doubling, it is easy to see that this condition implies \ref{Nec 2}. Indeed, \ref{Nec 1} implies $ \sup_{Q \in \D} \| \Delta_Q b \|_\infty < \infty$ and $h_Q^2(x) \sim \frac{\1_Q(x)}{\mu(Q)}$, so
\[|\beta_Q| \sim | \ang{b}_Q - \ang{b}_{Q^s}| + |\ang{b}_{Q_{-}}-\ang{b}_{Q_+}|+ \ang{b}_{Q_{-}^s}-\ang{b}_{Q_+^s}| \leq 3 \sup_{Q} \|\Delta_Qb \|_\infty < \infty.\]
\end{remark}
\begin{proof}
Use the splitting of the commutator
$$[\Hilb, b]=[\Hilb, \Pi_b]+[\Hilb, \Pi^*_b]+[\mathcal{H},\Lambda_b],$$
where $\Pi_b^*$ denotes the formal adjoint of the paraproduct $\Pi_b$ and $$\Lambda_b(f)= \sum_Q \Delta_Q(b \Delta_Qf)=\sum_Q c_Q \langle f, h_Q\rangle h_Q, \quad c_Q:= \langle b, h_Q^2\rangle$$
is a martingale multiplier. Let's prove the sufficiency first. \par 
Recall that $\Pi_b$ is bounded on $L^p(\mu)$ if and only if $b \in \bmo_p(\mu)$ by \cref{L^p bound of paraprod}. 
Hence, if $b \in \bmo_p(\mu) \cap \bmo_{p'}(\mu)$ then $\Pi_b$, $\Pi_b^*$ are both bounded on $L^p(\mu)$, so $[\Hilb, \Pi_b]$, $[\Hilb, \Pi_b^*]$ are both bounded on $L^p(\mu)$ for $1<p<\infty$. Notice that \begin{equation}\label{E: testing on haar} [\Hilb, \Lambda_b](h_Q)(x)=(c_Q-c_{Q^s})h_{Q^s}(x)=: \beta_Qh_{Q^s}(x),\end{equation}
so if $\beta \in \ell^\infty$ also $[\Hilb, \Lambda_b]$ is bounded on $L^p(\mu)$ for $1<p<\infty$. In particular \begin{enumerate}[label=(\roman*)]
    \item \label{Suff 1}for $1 <p \leq 2$, we have $\|b\|_{\bmo_{p}} \leq \|b\|_{\bmo_{p'}}$, hence $b \in \bmo_{p'}(\mu)$ and $\beta \in \ell^\infty$ are sufficient conditions for $L^p$ boundedness of $[\Hilb,b]$;
    \item \label{suff 2}for $2 \leq p < \infty $, we have $\|b\|_{\bmo_{p'}} \leq \|b\|_{\bmo_{p}}$ hence $b \in \bmo_p(\mu)$ and $\beta \in \ell^\infty$ are sufficient for $L^p$ boundedness of $[\Hilb,b]$. 
\end{enumerate}

Conversely, suppose that $[\Hilb,b]$ is bounded on $L^p(\mu)$ for some $1<p<\infty$. It follows that $b \in \bmo_p(\mu)$ by \eqref{CP and comm norm} and that $\Pi_b$ is bounded on $L^p(\mu)$ by \cref{L^p bound of paraprod}. Also, using $\Hilb^*=-\Hilb $ \[\| [\Hilb, b] \|_{L^p(\mu) \to L^p(\mu)}= \| [\Hilb,b]^* \|_{L^{p'}(\mu) \to L^{p'}(\mu)}=\| [\Hilb,b] \|_{L^{p'}(\mu) \to L^{p'}(\mu)} < \infty,\] which in turn implies that $b \in \bmo_{p'}$ and that $\Pi_b$ is bounded on $L^{p'}(\mu)$. Altogether, this implies that $b \in \bmo_p(\mu) \cap \bmo_{p'}(\mu)$ and that $[\Hilb, \Pi_b]$, $[\Hilb, \Pi_b^*]$ are both bounded on $L^p(\mu)$ for $1<p<\infty$, so $[\Hilb, \Lambda_b]$ has to be bounded on $L^p(\mu)$. By \eqref{E: testing on haar} and the fact that $\mu$ is sibling balanced it follows that $\beta \in \ell^\infty.$ We conclude that \ref{Suff 1} and \ref{suff 2} are also necessary respectively when $1<p\leq 2 $ and $2 \leq p <\infty$.
\end{proof}
In particular, the inclusions in \eqref{containment of bmo spaces} are strict. 
\begin{theorem}\label{THM: sharp containment}
 There exists a sibling balanced measure $\mu$ such that the following holds:
\begin{enumerate}
    \item for every $1<p<\infty$ there exists $f_p \in \bmo_p(\mu)$ such that $[\Hilb,f_p]$ is not bounded on $L^p(\mu)$;
    \item there exists a function $q$ such that for every $1<p<\infty$ we have that $q \in \bmo_p(\mu) \setminus \BMO(\mu)$ and $[\Hilb,b_p]$ is bounded on $L^p(\mu)$.
\end{enumerate}   
In other words we have that for every $1<p<\infty$
\[ \BMO(\mu) \subsetneq [\BMO]_p(\mu) \subsetneq \bmo_p(\mu).\]
\end{theorem}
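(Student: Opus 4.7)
The plan is to invoke the characterization from \cref{T: nec and suff cond}, namely
\[
[\BMO]_p(\mu)=\{b\in\bmo_{\alpha(p)}(\mu):\beta(b)\in\ell^\infty\},\qquad \alpha(p)=\max(p,p'),
\]
which reduces the strict containments $\BMO(\mu)\subsetneq[\BMO]_p(\mu)\subsetneq\bmo_p(\mu)$ to exhibiting explicit witnesses: a family $(f_p)_{p\in(1,\infty)}$ with $f_p\in\bmo_p\setminus[\BMO]_p$ and a function $q\in[\BMO]_p\setminus\BMO$ valid for every $p$. I take $\mu$ to be the atomless, sibling balanced measure from \cref{prop: BMOStrictInLaceyPack}, with $\mu(I_k)\sim 1/k$ and $\mu(I_k^b)\sim 1/k^2$. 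A direct expansion of $c_Q=\langle b,h_Q^2\rangle$ for $b=\sum_Rb_Rh_R$ yields the key identity
\[
\beta_Q=b_Q\!\int\!h_Q^3\,d\mu-b_{Q^s}\!\int\!h_{Q^s}^3\,d\mu + b_{\widehat Q}\bigl(h_{\widehat Q}|_Q-h_{\widehat Q}|_{Q^s}\bigr),
\]
the cornerstone of both constructions.

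For part~(1), fix $p$, set $k_j=2^j$, and consider the sparse Haar series
\[
f_p=\sum_{j\geq 1}\lambda_j h_{I_{k_j}},\qquad \lambda_j=\begin{cases}k_j^{-(p-1)/p}& p\geq 2,\\ k_j^{-1/2}& 1<p<2.\end{cases}
\]
The geometric spacing of $k_j$ gives Carleson packing at every ancestor cube, so $f_p\in\bmo_2$. The $\bmo_p$-norm is controlled by estimating $|f_p-\langle f_p\rangle_{I_{k_j}}|$ pointwise: the dominant contribution $\sim\lambda_jk_j$ lives on the imbalanced sibling $I_{k_j+1}^b\subset I_{k_j}$, and the calibration of $\lambda_j$ for $p\geq 2$ ensures the $L^p$ integral $\lambda_j^pk_j^p\mu(I_{k_j+1}^b)\sim\lambda_j^pk_j^{p-2}$ stays bounded after dividing by $\mu(I_{k_j})\sim 1/k_j$. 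Applying the identity above with $\int h_{I_{k_j}}^3\,d\mu\sim -k_j$, $\int h_{I_{k_j}^b}^3\,d\mu=0$ (since $I_{k_j}^b$ is balanced), and the sparsity $k_j-1\notin\{k_i\}$, I compute $\beta_{I_{k_j}}(f_p)\sim -\lambda_jk_j\sim -k_j^{1/p}\to\infty$, so $f_p\notin[\BMO]_p$. For $1<p<2$ the same series with $\lambda_j=k_j^{-1/2}$ lies in $\bmo_2\subseteq\bmo_p$ but fails $\bmo_{p'}$ by the analogous pointwise estimate $\|f_p\|_{\bmo_{p'}}^{p'}\gtrsim k_j^{p'/2-1}\to\infty$, placing $f_p\notin\bmo_{\alpha(p)}$.

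For part~(2), I build $q\in\bigcap_p\bmo_p(\mu)\setminus\BMO(\mu)$ with $\beta(q)\in\ell^\infty$ as a superposition of ``chain cancellation packets"
\[
q=\sum_{j\geq 1}\lambda_j\sum_{l=m_j}^{k_j}h_{I_l},
\]
where the connected block of equal Haar coefficients in each packet forces $\beta_{I_l}(q)\approx 0$ for $l$ strictly inside the block, so only the boundary terms at $l=m_j$ and $l=k_j+1$ survive. Choosing $k_j$ super-exponentially growing, $m_j=\lfloor k_j^{1-\eta}\rfloor$ for small $\eta>0$, and $\lambda_j\sim 1/m_j$ handles (a) Carleson packing from chain sparsity, (b) bounded boundary $\beta$-values from the amplitude calibration, and (c) $\|\Delta_{I_{k_j}}q\|_\infty=\lambda_jk_j\sim k_j^\eta\to\infty$, giving $q\notin\BMO$.

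The hardest step, which I expect to be the main obstacle, is verifying $q\in\bmo_p$ \emph{uniformly} in $p\in(1,\infty)$: pointwise values of $q-\langle q\rangle_Q$ on a chain cube $I_l$ are of order $\lambda_jl$, and their integrated $p$-th moments against the non-doubling measure must remain bounded as $p\to\infty$, most critically on cubes where the imbalance ratio $\mu(I_l)/\mu(I_l^b)\sim l$ blows up. The $\beta$-identity restricted to $Q=I_k$ gives $\beta_{I_k}(q)\sim k(b_{I_{k-1}}(q)-b_{I_k}(q))+b_{I_{k-1}}(q)$, and combined with Carleson packing this appears to tightly constrain the decay rate of $b_{I_k}(q)$; the chain cancellation is designed precisely to have enough room to violate $\BMO$ without violating $\beta\in\ell^\infty$. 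Resolving this simultaneously requires a delicate balance of $(\eta,k_j,m_j,\lambda_j)$, and if the rigidity in the Prop~\ref{prop: BMOStrictInLaceyPack} measure proves too restrictive, one can enrich $q$ with auxiliary Haar terms on balanced sub-cubes (e.g.~on $I_k^b$ and its descendants), which contribute to Carleson mass and $\bmo_p$ oscillation without disturbing $\beta$, since $\int h_{I_k^b}^3\,d\mu=0$.
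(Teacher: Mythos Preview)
Your Part~(1) is different from the paper's construction but is essentially sound: working on the simple measure of Proposition~\ref{prop: BMOStrictInLaceyPack}, the sparse Haar series $f_p=\sum_j\lambda_j h_{I_{k_j}}$ with $k_j=2^j$ and the stated amplitudes does lie in $\bmo_p$ while the $\beta$-identity gives $\beta_{I_{k_j}}\sim\lambda_j\int h_{I_{k_j}}^3\,d\mu\sim\lambda_j k_j\to\infty$, so $f_p\notin[\BMO]_p$. The paper instead builds a more elaborate periodic measure and takes $f_p$ to be a single bump of height $(n+2)^{1/p}$ on one tiny interval per period; your approach is more ``Haar-theoretic'' but reaches the same conclusion.

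Part~(2), however, has a genuine gap that your own caveats do not repair. Your chain cancellation controls $\beta_{I_l}$ only for $l$ \emph{strictly inside} the block $[m_j,k_j]$. At $l=k_j+1$ you have $b_{I_{k_j+1}}=b_{I_{k_j+1}^b}=0$ while $b_{\widehat{I_{k_j+1}}}=b_{I_{k_j}}=\lambda_j$, so the identity gives
\[
\beta_{I_{k_j+1}}=\lambda_j\bigl(h_{I_{k_j}}|_{I_{k_j+1}}-h_{I_{k_j}}|_{I_{k_j+1}^b}\bigr)\sim -\lambda_j k_j.
\]
Forcing this bounded requires $\lambda_j\lesssim 1/k_j$, but then $\|\Delta_{I_{k_j}}q\|_\infty=\lambda_j\|h_{I_{k_j}}\|_\infty\sim\lambda_j k_j\lesssim 1$, so $q\in\BMO$ --- exactly what you are trying to avoid. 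Your proposed calibration $\lambda_j\sim 1/m_j$ with $m_j=k_j^{1-\eta}$ makes $\beta_{I_{k_j+1}}\sim k_j^{\eta}\to\infty$. The ``auxiliary Haar terms on $I_k^b$'' escape route does not help either: on the Proposition~\ref{prop: BMOStrictInLaceyPack} measure the region $I_k^b$ is doubling, so bounded $\beta$ there forces bounded $\|\Delta_Q q\|_\infty$ there too. More generally, on this measure the three constraints ($\bmo_2$ packing, $\beta\in\ell^\infty$, $\sup_Q\|\Delta_Qq\|_\infty=\infty$) appear to be mutually incompatible for functions built from Haar coefficients on the single chain $\{I_l\}$: packing forces $|b_{I_l}|\lesssim l^{-1/2}$ with rapid enough decay in aggregate, while $\beta\in\ell^\infty$ forces $|b_{I_l}-b_{I_{l-1}}|\lesssim 1/l$, and these together pin $|b_{I_l}|\,l$ down.

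The paper resolves this by changing both the measure and the mechanism. It builds a richer sibling-balanced measure (with $\mu(I_k^b)\gg\mu(I_k)$, opposite to Proposition~\ref{prop: BMOStrictInLaceyPack}), and defines $q$ \emph{pointwise} rather than as a Haar series: $q\equiv u_k$ on $I_k^b$ with sequences $(u_k),(v_k)$ engineered so that $\langle q\rangle_{I_k}=v_k$ and the oscillation $v_k-u_k=(-1)^k\log k$ diverges (giving $q\notin\BMO$) while its \emph{alternating} nature makes the relevant combination
\[
c_{I_k}(q)-c_{I_k^b}(q)\approx (v_{k+1}-u_{k+1})+(v_k-u_k)=(-1)^k\log\tfrac{k}{k+1}=O(1/k)
\]
bounded. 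The sign alternation is the missing idea: it decouples ``$|v_k-u_k|\to\infty$'' from ``$\beta_{I_k}\to\infty$''. Your constant-coefficient chains have no such cancellation between consecutive scales.
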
 
Before proving this result, we state some corollaries. First of all, note that \cref{T: nec and suff cond} gives  \([\BMO]_p(\mu)= [\BMO]_{p'}(\mu)\) for every \(1<p<\infty,\) so we can restrict to the case $p \geq 2.$
Let \[B(\mu):= \{ b \in L_{\text{loc}}^2(\mu): \beta(b)= (\beta_Q(b))_Q \in \ell^\infty \}\]
where $\beta$ is as in \cref{T: nec and suff cond}. Since for every $p \geq 2$, $[\BMO]_p(\mu)= B(\mu) \cap \bmo_p(\mu)$, using the relation of $\bmo$ norms for $q>p \geq 2$ we get 
\([\BMO]_q(\mu) \subsetneq [\BMO]_p(\mu) \subsetneq [\BMO]_2(\mu),\)
so that $$[\BMO]_2(\mu)=B(\mu) \cap \bmo_2(\mu)= \bigcup_{p \geq 2} [\BMO]_p(\mu).$$  \begin{corollary}\label{commutator symbols} Define
\[[\BMO]_\infty(\mu):= \{b \in [\BMO]_2(\mu): \|[\Hilb,b]\|_{L^p(\mu) \to L^p(\mu)} < \infty, \text{ for every $1<p<\infty$} \},\]
Then we have $\BMO(\mu) \subsetneq [\BMO]_\infty(\mu)$ and \[ [\BMO]_\infty(\mu)= B(\mu) \cap \bigcap_{p \geq 2} \bmo_p(\mu). \]  
\end{corollary}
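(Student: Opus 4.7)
The plan is to treat this corollary as a direct synthesis of the two main results of Section~\ref{Section 4}: the characterization of $[\BMO]_p(\mu)$ in Theorem~\ref{T: nec and suff cond} and the sharpness example in Theorem~\ref{THM: sharp containment}. No new construction is required.

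For the equality $[\BMO]_\infty(\mu) = B(\mu) \cap \bigcap_{p\geq 2}\bmo_p(\mu)$, I would unfold the definition: $b\in[\BMO]_\infty(\mu)$ exactly when $b\in[\BMO]_p(\mu)$ for every $1<p<\infty$. Indeed $L^p$ boundedness of $[\Hilb,b]$ already forces $b\in\bmo_p(\mu)$ by \eqref{CP and comm norm}, while the $[\BMO]_2$ requirement supplies $\beta\in\ell^\infty$. The self-duality $\|[\Hilb,b]\|_{L^p\to L^p}=\|[\Hilb,b]\|_{L^{p'}\to L^{p'}}$ coming from $\Hilb^\ast=-\Hilb$ (used in the proof of Theorem~\ref{T: nec and suff cond}) gives $[\BMO]_p(\mu)=[\BMO]_{p'}(\mu)$, so I may restrict the intersection to $p\geq 2$. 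For such $p$ one has $\alpha(p)=p$, so Theorem~\ref{T: nec and suff cond} yields $[\BMO]_p(\mu)=B(\mu)\cap\bmo_p(\mu)$, and intersecting over $p\geq 2$ produces the claimed identity.

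For the strict containment $\BMO(\mu)\subsetneq[\BMO]_\infty(\mu)$, the inclusion is immediate: any $b\in\BMO(\mu)$ gives $[\Hilb,b]$ bounded on $L^p(\mu)$ for all $1<p<\infty$ by the result of \cite{BCAPW} recalled at the start of Section~\ref{Section 4}, hence $b\in[\BMO]_\infty(\mu)$. Strictness is the one point that requires an actual example, and this is precisely what Theorem~\ref{THM: sharp containment}(2) provides: a sibling balanced $\mu$ and a function $q$ that lies in $\bmo_p(\mu)\setminus\BMO(\mu)$ for every $1<p<\infty$ and for which $[\Hilb,q]$ is $L^p$-bounded for every $1<p<\infty$. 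By the necessity direction of Theorem~\ref{T: nec and suff cond}, such $q$ automatically satisfies $\beta(q)\in\ell^\infty$, so $q\in B(\mu)\cap\bigcap_{p\geq 2}\bmo_p(\mu)=[\BMO]_\infty(\mu)$ while $q\notin\BMO(\mu)$. The only genuinely hard work sits in Theorem~\ref{THM: sharp containment}(2); the corollary itself reduces to careful bookkeeping with the previously established results.
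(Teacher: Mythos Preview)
Your proposal is correct and follows essentially the same route as the paper: the equality is obtained by combining the duality $[\BMO]_p(\mu)=[\BMO]_{p'}(\mu)$ with the identification $[\BMO]_p(\mu)=B(\mu)\cap\bmo_p(\mu)$ for $p\ge 2$ from Theorem~\ref{T: nec and suff cond}, and the strict inclusion is supplied by the function $q$ of Theorem~\ref{THM: sharp containment}(2). The paper presents these ingredients in the discussion immediately preceding the corollary and defers the strictness to the subsequent construction, exactly as you outline.
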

The fact that the inclusion is strict will also be proved in the following section.
\subsection{Proof of \cref{THM: sharp containment}}
The scheme below constructs an absolutely continuous measure for which \cref{THM: sharp containment} holds. A similar strategy could be employed to construct an atomic measure satisfying the same properties.

For $k\ge1$ define
\[a_k=\begin{cases}1/2,&k=1,\\[2mm]
1/\sqrt{k},&k\ge2,\end{cases}\qquad b_k=1-a_k.\] Let also $c_{kj}=1-\frac{1}{k+j}$ and $d_{kj}=\frac{1}{k+j}$ for $k,j\geq 1$. Set $I=I_0:=[0,1)$ and, for every $n\in \mathbb{Z}$ and $k\geq 1$, define 
\begin{gather*}
    I_k=I_k^1:=[0,2^{-k}),\qquad 
    I_k^b=(I_k^1)^b:=[2^{-k}, 2^{-k+1})\\
    I_{kj}=I_{kj}^1:=[2^{-k},2^{-k}+2^{-k-j}),\qquad
    I_{kj}^b=(I_{kj}^1)^b:=[2^{-k}+2^{-k-j},2^{-k}+2^{-k-j+1}).
    \end{gather*}
In other words, $I_k^b$ is the dyadic sibling of $I_k$, which corresponds to its complement in $I_{k-1}$, and $I_{kj}, I_{jk}^b$ are sibling intervals at scale $j+k$ at the left endpoint of $I_k^b$.
For each $J \in \{I,I_k,I_k^b,I_{kj}, I_{kj}^b\}$, define its integer translation $J^n=J+(n-1)$.

For each $k\geq 1$, we define a function $g^k$ that is supported on $I_k^b$.
\begin{align*}
    g^k(x)&:=\begin{cases}
        0, & x\notin I_k^b\\
        (\prod^{k-1}_{i=1}a_i)b_k(\prod_{i=1}^{j-1}c_{ki})d_{kj}2^{k+j}, &x\in I_{kj}^b. 
    \end{cases}
\end{align*}
Since $\{I_k^b\}_k$ is a partition of $[0,1)$, we define $g$ as the infinite sum of $g^k$ and use $g$ to define an absolutely continuous measure $\mu$ as follows
\begin{align*}
    g(x)&:=\begin{cases}
        0, 
        &x \notin [0,1)\\
        g^k(x), & x\in I_k^b
    \end{cases}\\
    d\mu&:=\sum_{n\in \mathbb{Z}}g(x-n)dx.
\end{align*}
Therefore, $g(x)dx$ is a measure supported on $[0,1)$, and $\mu$ is constructed by periodically translating $g(x)dx$ into intervals of the form $[n-1,n)$. Notice that the measure $\mu$ is always uniform in $I_{kj}^b$.

\begin{figure}
\centering

\resizebox{1.0\textwidth}{!}{

\tikzset{every picture/.style={line width=0.75pt}} 

\begin{tikzpicture}[x=0.75pt,y=0.75pt,yscale=-1.2,xscale=1.2]

\draw    (252.24,83.04) -- (453.75,83.04) ;
\draw    (252.2,108.93) -- (347.17,108.93) ;
\draw    (456.77,83.08) -- (658.28,83.08) ;
\draw    (352.87,108.93) -- (453.53,108.93) ;
\draw    (457.59,108.93) -- (552.56,108.93) ;
\draw    (558.26,108.93) -- (658.93,108.93) ;
\draw    (251.33,138.7) -- (295.39,138.7) ;
\draw    (299.36,138.7) -- (346.74,138.7) ;
\draw    (352.3,138.7) -- (396.36,138.7) ;
\draw    (400.33,138.7) -- (452.45,138.49) ;
\draw    (456.73,138.27) -- (500.78,138.27) ;
\draw    (504.75,138.27) -- (552.13,138.27) ;
\draw    (557.7,138.27) -- (601.75,138.27) ;
\draw    (605.72,138.27) -- (657.85,138.06) ;
\draw    (252.2,168.91) -- (270.79,168.91) ;
\draw    (275.2,168.91) -- (294.53,168.91) ;
\draw    (301.39,168.91) -- (319.98,168.91) ;
\draw    (323.96,168.91) -- (343.29,168.91) ;
\draw    (352.74,168.91) -- (371.33,168.91) ;
\draw    (375.73,168.91) -- (395.07,168.91) ;
\draw    (401.93,168.91) -- (420.52,168.91) ;
\draw    (424.93,168.91) -- (449.87,168.91) ;
\draw    (459.75,168.91) -- (478.34,168.91) ;
\draw    (482.75,168.91) -- (502.08,168.91) ;
\draw    (508.94,168.91) -- (527.54,168.91) ;
\draw    (531.94,168.91) -- (551.27,168.91) ;
\draw    (560.72,168.91) -- (579.32,168.91) ;
\draw    (583.72,168.91) -- (603.05,168.91) ;
\draw    (609.91,168.91) -- (628.51,168.91) ;
\draw    (632.91,168.91) -- (652.24,168.91) ;

\draw (346.37,67.41) node [anchor=north west][inner sep=0.75pt]  [font=\tiny] [align=left] {$\displaystyle a_{1}$};
\draw (548.31,69.57) node [anchor=north west][inner sep=0.75pt]  [font=\tiny] [align=left] {$\displaystyle b_{1}$};
\draw (292.89,94.17) node [anchor=north west][inner sep=0.75pt]  [font=\tiny] [align=left] {$\displaystyle a_{1} a_{2}$};
\draw (262.56,123.94) node [anchor=north west][inner sep=0.75pt]  [font=\tiny] [align=left] {$\displaystyle a_{1} a_{2} a_{3}$};
\draw (394.29,96.32) node [anchor=north west][inner sep=0.75pt]  [font=\tiny] [align=left] {$\displaystyle a_{1} b_{2}$};
\draw (496.13,97.62) node [anchor=north west][inner sep=0.75pt]  [font=\tiny] [align=left] {$\displaystyle b_{1} c_{11}$};
\draw (600.7,97.62) node [anchor=north west][inner sep=0.75pt]  [font=\tiny] [align=left] {$\displaystyle b_{1} d_{11}$};
\draw (363.4,123.51) node [anchor=north west][inner sep=0.75pt]  [font=\tiny] [align=left] {$\displaystyle a_{1} b_{2} c_{21}$};
\draw (411.44,123.94) node [anchor=north west][inner sep=0.75pt]  [font=\tiny] [align=left] {$\displaystyle a_{1} b_{2} d_{21}$};
\draw (310.46,125.23) node [anchor=north west][inner sep=0.75pt]  [font=\tiny] [align=left] {$\displaystyle a_{1} a_{2} b_{3}$};
\draw (471.1,123.51) node [anchor=north west][inner sep=0.75pt]  [font=\tiny] [align=left] {$\displaystyle b_{1} c_{21}$};
\draw (574.51,121.51) node [anchor=north west][inner sep=0.75pt]  [font=\fontsize{0.33em}{0.4em}\selectfont] [align=left] {$\displaystyle \frac{b_{1} d_{11}}{2}$};
\draw (624.99,121.51) node [anchor=north west][inner sep=0.75pt]  [font=\fontsize{0.33em}{0.4em}\selectfont] [align=left] {$\displaystyle \frac{b_{1} d_{11}}{2}$};
\draw (519.58,122.64) node [anchor=north west][inner sep=0.75pt]  [font=\tiny] [align=left] {$\displaystyle b_{1} d_{21}$};
\draw (251.33,159.31) node [anchor=north west][inner sep=0.75pt]  [font=\fontsize{0.33em}{0.4em}\selectfont] [align=left] {$\displaystyle a_{1} a_{2} a_{3} a_{4}$};
\draw (275.93,177.43) node [anchor=north west][inner sep=0.75pt]  [font=\fontsize{0.33em}{0.4em}\selectfont] [align=left] {$\displaystyle a_{1} a_{2} a_{3} b_{4}$};
\draw (300.82,158.88) node [anchor=north west][inner sep=0.75pt]  [font=\fontsize{0.33em}{0.4em}\selectfont] [align=left] {$\displaystyle a_{1} a_{2} b_{3} c_{31}$};
\draw (322.83,176.57) node [anchor=north west][inner sep=0.75pt]  [font=\fontsize{0.33em}{0.4em}\selectfont] [align=left] {$\displaystyle a_{1} a_{2} b_{3} d_{31}$};
\draw (350.31,158.02) node [anchor=north west][inner sep=0.75pt]  [font=\fontsize{0.33em}{0.4em}\selectfont] [align=left] {$\displaystyle a_{1} b_{2} c_{21} c_{22}$};
\draw (373.61,176.14) node [anchor=north west][inner sep=0.75pt]  [font=\fontsize{0.33em}{0.4em}\selectfont] [align=left] {$\displaystyle a_{1} b_{2} c_{21} d_{22}$};
\draw (404.37,149.99) node [anchor=north west][inner sep=0.75pt]  [font=\fontsize{0.33em}{0.4em}\selectfont] [align=left] {$\displaystyle \frac{a_{1} b_{2} d_{21}}{2}$};
\draw (428.53,177.2) node [anchor=north west][inner sep=0.75pt]  [font=\fontsize{0.33em}{0.4em}\selectfont] [align=left] {$\displaystyle \frac{a_{1} b_{2} d_{21}}{2}$};
\draw (562.42,151.28) node [anchor=north west][inner sep=0.75pt]  [font=\fontsize{0.33em}{0.4em}\selectfont] [align=left] {$\displaystyle \frac{b_{1} d_{11}}{4}$};
\draw (587.7,177.2) node [anchor=north west][inner sep=0.75pt]  [font=\fontsize{0.33em}{0.4em}\selectfont] [align=left] {$\displaystyle \frac{b_{1} d_{11}}{4}$};
\draw (636.64,177.2) node [anchor=north west][inner sep=0.75pt]  [font=\fontsize{0.33em}{0.4em}\selectfont] [align=left] {$\displaystyle \frac{b_{1} d_{11}}{4}$};
\draw (613.34,151.28) node [anchor=north west][inner sep=0.75pt]  [font=\fontsize{0.33em}{0.4em}\selectfont] [align=left] {$\displaystyle \frac{b_{1} d_{11}}{4}$};
\draw (510.21,151.28) node [anchor=north west][inner sep=0.75pt]  [font=\fontsize{0.33em}{0.4em}\selectfont] [align=left] {$\displaystyle \frac{b_{1} d_{21}}{2}$};
\draw (536.1,177.17) node [anchor=north west][inner sep=0.75pt]  [font=\fontsize{0.33em}{0.4em}\selectfont] [align=left] {$\displaystyle \frac{b_{1} d_{21}}{2}$};
\draw (458.74,158.45) node [anchor=north west][inner sep=0.75pt]  [font=\fontsize{0.33em}{0.4em}\selectfont] [align=left] {$\displaystyle b_{1} c_{21} c_{22}$};
\draw (483.77,177.43) node [anchor=north west][inner sep=0.75pt]  [font=\fontsize{0.33em}{0.4em}\selectfont] [align=left] {$\displaystyle b_{1} c_{21} d_{22}$};

\end{tikzpicture}
}
\caption{The construction of $\mu$ on $ [0,1)$}
\end{figure}

We can calculate the measure of $\mu$ for $I_{kj}^b$ and $I_k^b$.
\begin{align*}
    \mu(I_{kj}^b)&=\int_{I_{kj}^b}(\prod^{k-1}_{i=1}a_i)b_k(\prod_{i=1}^{j-1}c_{ki})d_{kj}2^{k+j}dx=(\prod^{k-1}_{i=1}a_i)b_k(\prod_{i=1}^{j-1}c_{ki})d_{kj}\\
    \mu(I_k^b)&=\sum_{j=1}^\infty \mu(I^b_{kj})=\sum_{j=1}^\infty (\prod^{k-1}_{i=1}a_i)b_k(\prod_{i=1}^{j-1}c_{ki})d_{kj}=(\prod^{k-1}_{i=1}a_i)b_k(\sum_{j=1}^\infty (\prod_{i=1}^{j-1}c_{ki})d_{kj})=(\prod^{k-1}_{i=1}a_i)b_k\\
    \mu([0,1))&=\sum_{k=1}^\infty \mu(I^b_{k})=\sum_{i\geq 1}(\prod_{j=1}^{i-1}a_j)b_i=1.
\end{align*}
The last two equalities can be proved by noticing that the series involved are telescoping.
\begin{proposition}
    $\mu$ is sibling balanced but not balanced.
\end{proposition}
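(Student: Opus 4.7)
The plan is to reduce all nontrivial computations to the dyadic tree inside $[0,1)$. By the translation invariance of the construction, every cube of the form $[n,n+1)$ looks identical to $[0,1)$; meanwhile, for any dyadic cube strictly containing $[0,1)$ the measure $\mu$ agrees with Lebesgue measure, so on that part of the tree both the sibling-balanced ratio and the balanced ratio are trivially equal to constants. Thus all the content lies in the sibling pairs sitting inside a single unit interval.

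The first step I would take is structural: verify that $I_{kj}$ and $I_{kj}^b$ are \emph{genuine} dyadic siblings at scale $k+j$, with dyadic parent $I_{k,j-1}$ under the convention $I_{k,0}:=I_k^b$. Once this identification is made, the sibling pairs inside $[0,1)$ split into exactly two families: (A) $\{I_k, I_k^b\}$, children of $I_{k-1}$, and (B) $\{I_{kj}, I_{kj}^b\}$, children of $I_{k,j-1}$. I expect this bookkeeping to be the main obstacle: the labelling suggests that the $I_{kj}, I_{kj}^b$ form a non-dyadic partition of $I_k^b$, whereas one has to look at the actual scales to recognize that this apparent extra subdivision coincides with the honest dyadic children of $I_k^b$ and their descendants.

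With the tree in hand, the next step is to derive closed forms for the measures of these intervals. The key identity is $\prod_{l=1}^{i-1}c_{kl}=\frac{k}{k+i-1}$, which turns the defining sums into telescoping ones and yields
\[
\mu(I_k)=\frac{1}{2\sqrt{k!}},\qquad \mu(I_k^b)=\frac{1-1/\sqrt{k}}{2\sqrt{(k-1)!}},\qquad \mu(I_{kj})=\mu(I_k^b)\frac{k}{k+j},\qquad \mu(I_{kj}^b)=\mu(I_k^b)\frac{k}{(k+j-1)(k+j)}.
\]

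The final step is to read off both conclusions from these formulas. For sibling balance in family (B), the density $g^k$ is constant on $I_{kj}^b$, so $m(I_{kj}^b)=\mu(I_{kj}^b)/4$, while the same formulas applied to the children of $I_{kj}$ give $m(I_{kj})=\mu(I_k^b)\cdot k/(k+j+1)^2$; the ratio then simplifies to $4(k+j-1)(k+j)/(k+j+1)^2$, uniformly bounded. In family (A), the analogous computation combined with cancellation between consecutive factorials yields $m(I_k)/m(I_k^b)\sim 1$ as $k\to\infty$. For the failure of balance, the nested sequence $I_k\subsetneq I_{k-1}$ suffices: from the formulas above, $m(I_{k-1})\sim 1/(2\sqrt{k!})$ (dominated by $\mu(I_k)$) while $m(I_k)\sim 1/(2\sqrt{(k+1)!})$, so $m(I_k)/m(I_{k-1})\sim 1/\sqrt{k+1}\to 0$, contradicting the balanced relation $m(Q)\sim m(\widehat{Q})$.
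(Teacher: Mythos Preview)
Your approach is essentially the same as the paper's: reduce to sibling pairs inside $[0,1)$, compute $m(I_k)$, $m(I_k^b)$, $m(I_{kj})$, $m(I_{kj}^b)$, and compare. Your explicit closed forms via the telescoping product $\prod_{l=1}^{j-1}c_{kl}=k/(k+j-1)$ are a nice addition the paper does not spell out, and your failure-of-balance computation $m(I_k)/m(I_{k-1})\sim 1/\sqrt{k+1}$ matches the paper's exactly.

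There is one small omission to fix. Your claim that the sibling pairs inside $[0,1)$ split into \emph{exactly} the two families (A) and (B) is false: you have left out every sibling pair $J,J^s$ with $\widehat{J}\subseteq I_{kj}^b$ for some $k,j\ge 1$. These are genuine dyadic siblings not of the form $I_{k'}$ or $I_{k'j'}$. The paper treats them as its sub-case 1(i), and since you already noted that $g^k$ is constant on $I_{kj}^b$, the verification is trivial ($\mu(J)=\mu(J^s)$, hence $m(J)=m(J^s)$). Just add a sentence to cover this third family and your argument is complete.
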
 \begin{proof}
    Let $I$ be a dyadic interval. By construction of $\mu$ we can restrict to consider $I \subseteq [0,1)$.  For $I_0=[0,1)$ the claim is obvious, as $\mu([0,1))=1$ and $\mu([0,\frac{1}{2}))=a_1=\frac{1}{2}$. When $I \subset [0,1)$ there are two cases:
\begin{enumerate}
\item $\widehat{I}\subset I_k^b$ for some $k\geq 1$. There are two sub-cases. \begin{enumerate}[label=(\roman*)]
    \item $\widehat{I}\subset I_{kj}^b$ for some $j\geq 1$. As $\mu$ is uniform in $I_{kj}^b$, we have $m(I)=m(I^s)$.
    \item $I=I_{kj}$ or $I=I_{kj}^b$. Short calculations reveal that $m(I_{k,j}^b)= \frac{1}{4} d_{kj}\mu(\widehat{I}_{kj})$ and $m(I_{kj})= c_{k(j+1)} d_{k(j+1)} c_{kj} \mu(\widehat{I}_{kj}).$ Therefore, the ratio $\frac{m(I_{kj})}{m(I_{kj}^b)}$ converges to $4$ as $j, k \rightarrow \infty$, and is bounded above and below.

\end{enumerate}
\item $I=I_k$ or $I=I_k^b$. In this case, we compute $m(I_k^b)=c_{k1}d_{k1}b_k\mu(\widehat{I}_k)$, $m(I_k)= a_{k+1}b_{k+1}a_k\mu(\widehat{I}_k)$, and $m(\widehat{I_k})= a_{k}b_{k}\mu(\widehat{I_k}).$ The ratio $\frac{m(I_k)}{m(I_k^b)}$ converges to $1$ and is bounded above and below. The ratio  $\frac{m(I_k)}{m(\widehat{I}_k)}$ converges to $0$, proving $\mu$ is not balanced.

\end{enumerate}
We conclude that $\mu$ is sibling balanced but not balanced. \end{proof}
\begin{proposition}
Let $1< p<\infty$. Consider $d_{kj}$ as above. Define
\begin{align*}
    f_p(x)&:=\begin{cases}d_{(n+1)1}^{-1/p}
        =(n+2)^{1/p}, &x\in (I^n_{(n+1)1})^b, n\geq 1\\
        0,&\text{otherwise}.
    \end{cases}.
\end{align*}
Then
\begin{enumerate}
\item \label{Carleson first example} $\sup_{I\in \mathscr{D}}\frac{1}{\mu(I)}\int_I|f_p-\LL f_p \RR_I|^pd\mu<\infty$. 
\item \label{not in BMO first example}$[\mathcal{H},f_p]$ is not bounded on $L^p$.
\end{enumerate}
Hence, $f_p\in \bmo_p(\mu)\setminus [\BMO]_p(\mu)$ and $[\BMO]_p(\mu)\subsetneq \bmo_p(\mu)$.
\end{proposition}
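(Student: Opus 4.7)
My plan is to verify parts (1) and (2) of the proposition and then invoke \cref{T: nec and suff cond} (the characterization of $[\BMO]_p(\mu)$) to conclude. For part (2), it will in fact suffice to show $\beta(f_p)=(\beta_Q(f_p))_Q\notin\ell^\infty$, since the necessity direction of \cref{T: nec and suff cond} then yields $f_p\notin[\BMO]_p(\mu)$ and, in particular, that $[\Hilb,f_p]$ is unbounded on $L^p(\mu)$.

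For part (1), I exploit that $f_p$ is supported on the disjoint union $\bigcup_{n\geq 1}(I^n_{(n+1)1})^b$ and is constant on each bump, so the oscillation vanishes on any dyadic interval contained in a single bump. The remaining dyadic intervals split into two cases. In the \emph{local case} $I\subset[n-1,n)$ with $I\supsetneq(I^n_{(n+1)1})^b$, the candidates are exactly the dyadic ancestors $(I^n_{n+1})^b$ and $I^n_k$ for $0\leq k\leq n$; using the formulas $\mu((I^n_{(n+1)1})^b)=(\prod_{i=1}^n a_i)b_{n+1}d_{(n+1)1}$ and $\mu(I^n_k)=\prod_{i=1}^k a_i$ (which follow from the construction by a short telescoping argument), the quantity $\frac{1}{\mu(I)}\int_I|f_p-\langle f_p\rangle_I|^p\,d\mu$ can be estimated by $2(n+2)\,\mu((I^n_{(n+1)1})^b)/\mu(I)$, which simplifies via $d_{(n+1)1}=1/(n+2)$ to $2\,b_{n+1}\prod_{i=k+1}^n a_i\leq 2$, uniformly in $n,k$. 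In the \emph{large case}, where $|I|\geq 1$, I use the trivial bound $\int_I|f_p-\langle f_p\rangle_I|^p\,d\mu\leq 2^p\int_I|f_p|^p\,d\mu$ together with the computation $\int_{[n-1,n)}|f_p|^p\,d\mu=(\prod_{i=1}^n a_i)b_{n+1}$, whose sum over $n$ converges, while $\mu(I)\geq 1$.

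For part (2), I test at $Q_n:=I^n_{n+1}$, whose dyadic sibling $Q_n^s=(I^n_{n+1})^b$ contains the unique bump of $f_p$ on $[n-1,n)$. Since $f_p$ vanishes on $Q_n$, we immediately have $c_{Q_n}(f_p)=0$. For $c_{Q_n^s}(f_p)=\langle f_p,h_{Q_n^s}^2\rangle$, note that the bump $(I^n_{(n+1)1})^b$ coincides with one child of $Q_n^s$ while $f_p$ vanishes on the other; expanding $h_{Q_n^s}^2$ in the two children and substituting the identities $m((I^n_{n+1})^b)=(\prod_{i=1}^n a_i)b_{n+1}c_{(n+1)1}d_{(n+1)1}$, $\mu((I^n_{(n+1)1})^b)=(\prod_{i=1}^n a_i)b_{n+1}d_{(n+1)1}$, and $c_{(n+1)1}=(n+1)/(n+2)$ yields
\[|\beta_{Q_n}(f_p)|=|c_{Q_n^s}(f_p)|=(n+1)(n+2)^{1/p-1},\]
which diverges as $n\to\infty$ for every $1<p<\infty$, since the exponent $1-1/p$ is strictly less than $1$.

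The main obstacle is the local-case bookkeeping in part (1): one must keep track of the product $\prod_{i}a_i$ across the different scales of ancestors $I^n_k$ and verify that the factor $n+2$ coming from $|f_p|^p$ is absorbed by $d_{(n+1)1}=1/(n+2)$. Once both parts are in place, \cref{T: nec and suff cond} forces $f_p\notin[\BMO]_p(\mu)$, so $f_p\in\bmo_p(\mu)\setminus[\BMO]_p(\mu)$, and hence $[\BMO]_p(\mu)\subsetneq\bmo_p(\mu)$, as claimed.
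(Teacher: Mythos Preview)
Your proposal is correct and follows essentially the same approach as the paper: a case analysis on $I$ for part~(1) using the bound $\langle|f_p-\langle f_p\rangle_I|^p\rangle_I\lesssim\langle|f_p|^p\rangle_I$, and for part~(2) exhibiting intervals where $|c_I-c_{I^s}|\to\infty$ so that $\beta\notin\ell^\infty$. The one minor difference is the choice of test interval in part~(2): the paper tests at the bump $I=(I^n_{(n+1)1})^b$ itself, where $\mu$ is uniform and $f_p$ is constant, so that $c_I(f_p)=\langle f_p\rangle_I=(n+2)^{1/p}$ directly, whereas your choice $Q_n=I^n_{n+1}$ sits one level up and requires expanding the full formula for $c_{Q_n^s}$---both yield divergence, but the paper's choice avoids the extra computation.
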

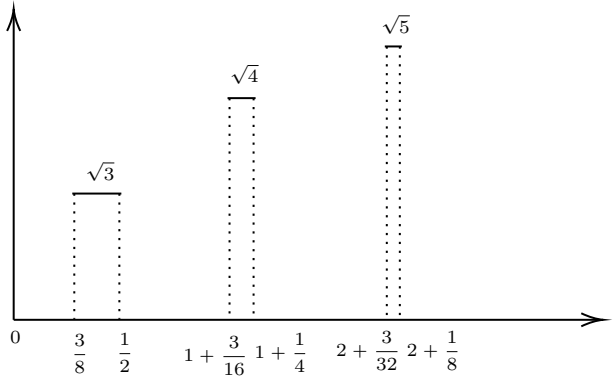
\begin{figure}[h]
\centering

\tikzset{every picture/.style={line width=0.75pt}} 

\begin{tikzpicture}[x=0.75pt,y=0.75pt,yscale=-1,xscale=1]

\draw    (145.5,180) -- (437.75,180) ;
\draw [shift={(439.75,180)}, rotate = 180] [color={rgb, 255:red, 0; green, 0; blue, 0 }  ][line width=0.75]    (10.93,-3.29) .. controls (6.95,-1.4) and (3.31,-0.3) .. (0,0) .. controls (3.31,0.3) and (6.95,1.4) .. (10.93,3.29)   ;
\draw    (145.5,180) -- (145.5,25.75) ;
\draw [shift={(145.5,23.75)}, rotate = 90] [color={rgb, 255:red, 0; green, 0; blue, 0 }  ][line width=0.75]    (10.93,-3.29) .. controls (6.95,-1.4) and (3.31,-0.3) .. (0,0) .. controls (3.31,0.3) and (6.95,1.4) .. (10.93,3.29)   ;
\draw    (174.75,116.5) -- (199.25,116.5) ;
\draw    (252.25,68.5) -- (266.25,68.5) ;
\draw    (330.75,42.5) -- (339.25,42.5) ;
\draw  [dash pattern={on 0.84pt off 2.51pt}]  (175.75,116.5) -- (175.75,179.5) ;
\draw  [dash pattern={on 0.84pt off 2.51pt}]  (198.25,116.5) -- (198.25,179.5) ;
\draw  [dash pattern={on 0.84pt off 2.51pt}]  (253.25,68.5) -- (253.25,179.5) ;
\draw  [dash pattern={on 0.84pt off 2.51pt}]  (265.25,68.5) -- (265.25,179.5) ;
\draw  [dash pattern={on 0.84pt off 2.51pt}]  (331.75,42.5) -- (331.75,180) ;
\draw  [dash pattern={on 0.84pt off 2.51pt}]  (338.25,42.5) -- (338.25,180) ;

\draw (142.25,184) node [anchor=north west][inner sep=0.75pt]  [font=\tiny] [align=left] {$\displaystyle 0$};
\draw (172.75,185.5) node [anchor=north west][inner sep=0.75pt]  [font=\tiny] [align=left] {$\displaystyle \frac{3}{8}$};
\draw (195.25,185) node [anchor=north west][inner sep=0.75pt]  [font=\tiny] [align=left] {$\displaystyle \frac{1}{2}$};
\draw (264.25,185) node [anchor=north west][inner sep=0.75pt]  [font=\tiny] [align=left] {$\displaystyle 1+\frac{1}{4}$};
\draw (228.75,186.5) node [anchor=north west][inner sep=0.75pt]  [font=\tiny] [align=left] {$\displaystyle 1+\frac{3}{16}$};
\draw (305,184) node [anchor=north west][inner sep=0.75pt]  [font=\tiny] [align=left] {$\displaystyle 2+\frac{3}{32}$};
\draw (340.25,184) node [anchor=north west][inner sep=0.75pt]  [font=\tiny] [align=left] {$\displaystyle 2+\frac{1}{8}$};
\draw (180.25,99) node [anchor=north west][inner sep=0.75pt]  [font=\tiny] [align=left] {$\displaystyle \sqrt{3}$};
\draw (252.25,50) node [anchor=north west][inner sep=0.75pt]  [font=\tiny] [align=left] {$\displaystyle \sqrt{4}$};
\draw (327.75,25) node [anchor=north west][inner sep=0.75pt]  [font=\tiny] [align=left] {$\displaystyle \sqrt{5}$};

\end{tikzpicture}
\caption{A visualization of $f_2$.}
\end{figure}
\begin{proof}
We first prove \ref{Carleson first example}. As $f_p(x)= 0$ when $x<0$, we can restrict to $I\subset[0,\infty)$.
\begin{enumerate}[label=(\roman*)]
\item If $|I|\geq 1$, then $I=[n-1,n-1+m)$ for some $n\geq 1$ and some positive integer $m$. Note that 
\begin{align*}
\lim_{n\rightarrow\infty}\int_{[n-1,n)}f_pd\mu&=\lim_{n\rightarrow\infty}(\prod^{n}_{i=1}a_i)b_{n+1}(d_{(n+1)1})^{1-\frac{1}{p}}=0,
\end{align*}
so using this fact, we estimate the average
\begin{align*}
\LL f_p\RR_{[n-1,n-1+m)}&=\frac{1}{m}\int_{[n-1,n-1+m)}f_pd\mu=\frac{\sum_{i=n}^{n-1+m}\int_{[i-1,i)}f_pd\mu}{m}\lesssim 1.
\end{align*}

In a similar way, one can show 
$$ \lim_{n\rightarrow\infty}\int_{[n-1,n)}f_p^pd\mu=0,$$ which leads to the estimate 
$$\frac{1}{\mu(I)}\int_I|f_p-\LL f_p\RR_I|^pd\mu \lesssim 1.$$

In the calculations above, we used the fact that $b_{n+1}, d_{(n+1)1}<1$ and $\lim_n\prod_{i=1}^na_i=0$.
\item If $|I|<1$, then $I$ is strictly contained in some interval $[n-1,n)$ for $n\geq 1$.
If $|I|\leq 2^{-n-2}$ or $I\cap (I^n_{(n+1)1})^b=\emptyset$, then $f$ is constant on $I$ and thus
\[ \frac{1}{\mu(I)}\int_I|f_p-\LL f_p\RR_I|^pd\mu=0.\]
If $|I|>2^{-n-2}$ and $I\cap (I^n_{n+1})^b\ne \emptyset$, then $I$ must contain $(I^n_{n+1})^b=\widehat{(I^n_{(n+1)1})^b}$ and thus $\mu(I)\geq \mu((I^n_{n+1})^b)$. We bound the averages
\begin{align*}
\LL f_p\RR_I&=\frac{1}{\mu(I)}\int_If_pd\mu\leq \frac{1}{\mu((I^n_{n+1})^b)}(d_{(n+1)1}^{-1/p})\mu((I_{(n+1)1}^n)^b)=(d_{(n+1)1})^{1-1/p}\leq 1,\\
\LL f_p^p\RR_I&=\frac{1}{\mu(I)}\int_If_pd\mu\leq \frac{1}{\mu((I^n_{n+1})^b)}(d_{(n+1)1}^{-1})\mu((I_{(n+1)1}^n)^b)=1.
\end{align*}
Putting the above two estimates together, we get
$$\frac{1}{\mu(I)}\int_I|f_p-\LL f_p\RR_I|^pd\mu \leq \frac{1}{\mu(I)}\int_I2^p(f_p^p+\LL f_p\RR_I^p)d\mu=2^p(\LL f_p^p\RR_I+\LL f_p\RR_I^p)\lesssim_p 1.
$$
\end{enumerate}

We are left with \ref{not in BMO first example}. It suffices to show that   \[\sup_I|c_I(f_p)-c_{I^s}(f_p)|=\infty.\]
Notice that $c_I$ can be rewritten as (\cite[page 15]{BCAPW})
\begin{align*}
c_I(f_p)&=\langle f_p, h_I \rangle \int h_I^3 \, d\mu +\langle f_p \rangle_{I}=(\LL f_p\RR_{I_+}-\LL f_p\RR_{I_-})\frac{\mu(I_-)-\mu(I_+)}{\mu(I)}+\LL f_p\RR_I.
\end{align*}
For $I=(I^n_{(n+1)1})^b$ $f_p$ vanishes on $I^s$, so $c_{I^s}(f_p)=0$. As $\mu$ is uniform on $I$ and $f_p$ is constant on $I$ we can conclude that 
$$c_I(f)=\LL f\RR_I=d_{(n+1)1}^{-1/p},$$
\begin{align*}
    \lim_{n\rightarrow\infty}|c_I(f_p)-c_{I^s}(f_p)|&=\lim_{n\rightarrow\infty}d_{(n+1)1}^{-1/p}=\lim_{n\rightarrow\infty}(n+2)^{1/p}=\infty.
\end{align*}
\end{proof}

Define now sequences $(u_k)_{k\ge1}$ and $(v_k)_{k\ge1}$ by
\begin{align*}
v_1&=1,\qquad v_k=v_{k-1}+b_k(-1)^k\log k,\\
u_1&=0,\qquad u_k=v_{k-1}-a_k(-1)^k\log k.
\end{align*}
It is easy to prove that the following properties are satisfied:
    \begin{equation} \label{prop of u,v} a_kv_k+b_ku_k=v_{k-1}, \quad v_k-u_k=(-1)^k\log(k), \quad  \sup_{k \geq 1}|v_ka_k|<\infty.\end{equation}
We now show  that $\BMO(\mu) \subsetneq [\BMO]_p(\mu)$. Define 
\begin{align*}
    p(x)&:=\begin{cases}
        u_k, & x\in I_k^b,k\geq 1\\
        0, &x\notin [0,1)
    \end{cases}
\end{align*}
and $q(x):=\sum_{n\in \mathbb{Z}}p(x-n)$ by periodically translating $p(x)$.

\begin{figure}[H]
\centering

\tikzset{every picture/.style={line width=0.75pt}} 

\begin{tikzpicture}[x=0.75pt,y=0.75pt,yscale=-1,xscale=1]

\draw    (454.4,228.99) -- (690,228.99) ;
\draw    (455.4,96) -- (455.4,228.99) ;
\draw   (456.06,236.01) .. controls (456.06,240.68) and (458.39,243.01) .. (463.06,243.01) -- (501.44,243.01) .. controls (508.11,243.01) and (511.44,245.34) .. (511.44,250.01) .. controls (511.44,245.34) and (514.77,243.01) .. (521.44,243.01)(518.44,243.01) -- (559.81,243.01) .. controls (564.48,243.01) and (566.81,240.68) .. (566.81,236.01) ;
\draw    (570.45,141.64) -- (669.97,141.64) ;
\draw    (518.87,169.56) -- (566.81,169.56) ;
\draw    (481.55,113.12) -- (514.62,113.12) ;
\draw   (456.97,169.56) .. controls (456.97,174.23) and (459.3,176.56) .. (463.97,176.56) -- (478.6,176.56) .. controls (485.27,176.56) and (488.6,178.89) .. (488.6,183.56) .. controls (488.6,178.89) and (491.93,176.56) .. (498.6,176.56)(495.6,176.56) -- (508.84,176.56) .. controls (513.51,176.56) and (515.84,174.23) .. (515.84,169.56) ;
\draw   (456.47,132.54) .. controls (456.64,135.94) and (458.42,137.56) .. (461.82,137.39) -- (461.82,137.39) .. controls (466.68,137.15) and (469.19,138.73) .. (469.35,142.14) .. controls (469.19,138.73) and (471.54,136.91) .. (476.39,136.68)(474.21,136.78) -- (476.39,136.68) .. controls (479.79,136.51) and (481.41,134.73) .. (481.24,131.33) ;

\draw (496.67,263) node [anchor=north west][inner sep=0.75pt]  [font=\tiny] [align=left] {$\displaystyle \langle q\rangle _{I_{k}} =v_{k}$};
\draw (614.61,126.5) node [anchor=north west][inner sep=0.75pt]  [font=\tiny] [align=left] {$\displaystyle u_{k}$};
\draw (540.3,177.48) node [anchor=north west][inner sep=0.75pt]  [font=\tiny] [align=left] {$\displaystyle u_{k+1}$};
\draw (488.72,119.22) node [anchor=north west][inner sep=0.75pt]  [font=\tiny] [align=left] {$\displaystyle u_{k+2}$};
\draw (467.02,194.47) node [anchor=north west][inner sep=0.75pt]  [font=\fontsize{0.47em}{0.56em}\selectfont] [align=left] {$\displaystyle \langle q\rangle _{I_{k+1}} =v_{k+1}$};
\draw (460.35,108.9) node [anchor=north west][inner sep=0.75pt]  [font=\large] [align=left] {...};
\draw (464.58,148.52) node [anchor=north west][inner sep=0.75pt]  [font=\fontsize{0.33em}{0.4em}\selectfont] [align=left] {$\displaystyle \langle q\rangle _{I_{k+2}} =v_{k+2}$};

\end{tikzpicture}
\caption{Values and averages of $q$}
\label{fig:placeholder}
\end{figure}
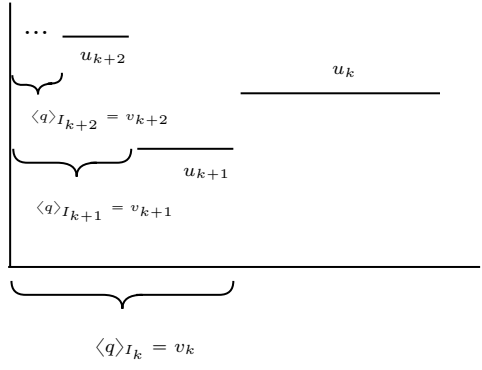

\begin{proposition} The function $q$ satisfies the following properties: 

\begin{enumerate} 
    \item \label{q integ} $q$ is integrable on each $I_k$ and $\LL q\RR_{I_k}=v_k$.
    \item \label{q not in bmo} We have that $\sup_{I\in \mathscr{D}}|\LL q\RR_I-\LL q\RR_{I^s}|=\infty$.
    \item \label{p carl}For every $1<p<\infty $ we have $\sup_{I\in \mathscr{D}}\frac{1}{\mu(I)}\int_I|q-\LL q\RR_I|^pd\mu<\infty$.
    \item \label{boundedness of hilb} For every $1<p<\infty $ $[\mathcal{H},q]$ is bounded on $L^p(\mu)$.
\end{enumerate}
Hence, $q\in [\BMO]_p(\mu)\setminus \BMO(\mu)$ and $\BMO(\mu)\subsetneq [\BMO]_p(\mu)$.
\end{proposition}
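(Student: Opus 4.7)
The plan is to verify the four properties in sequence, with \cref{T: nec and suff cond} doing the heavy lifting for \ref{boundedness of hilb}.

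I would establish \ref{q integ} by downward induction on $k$, using the recurrence $a_k v_k + b_k u_k = v_{k-1}$ from \eqref{prop of u,v}: the children of $I_{k-1}$ are $I_k$ (with $\mu$-proportion $a_k$) and $I_k^b$ (with proportion $b_k$), $q$ is constant equal to $u_k$ on $I_k^b$, and by the inductive hypothesis $\LL q\RR_{I_k}=v_k$, so averaging gives $\LL q\RR_{I_{k-1}}=v_{k-1}$. Integrability of $q$ on each $I_k$ follows from $|u_j|=O(\log j)$ (an immediate consequence of the uniform boundedness of the alternating partial sums $\sum(-1)^i\log i$) combined with the super-factorial decay $\mu(I_j^b)=(\prod_{i=1}^{j-1}a_i)\,b_j\asymp 1/\sqrt{(j-1)!}$. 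Property \ref{q not in bmo} is then immediate by testing against $I=I_k$: $|\LL q\RR_{I_k}-\LL q\RR_{I_k^b}|=|v_k-u_k|=\log k\to\infty$ by the second identity in \eqref{prop of u,v}.

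For the packing condition \ref{p carl}, the key reduction is that, by the one-periodicity of $q$ and the uniformity of $\mu$ on each $I_k^b$, $q$ is constant on every dyadic interval strictly contained in a translate of some $I_k^b$; such intervals contribute zero. Dyadic intervals of integer length have $\mu$-average $\tfrac{1}{2}$ by periodicity, and their oscillation reduces to $\int_{[0,1)}|q-1/2|^p\,d\mu$, which converges by the size/measure trade-off above. The only substantive case is $Q=I_k$, where I would expand
\[
\frac{1}{\mu(I_k)}\int_{I_k}|q-v_k|^p\,d\mu=\sum_{j>k}(a_{k+1}\cdots a_{j-1})\,b_j\,|u_j-v_k|^p,
\]
split $|u_j-v_k|\le |u_j-v_{j-1}|+|v_{j-1}-v_k|$, and control each piece using \eqref{prop of u,v}: the first equals $a_j\log j=\log j/\sqrt{j}$. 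The hard part is bounding the second uniformly in $(j,k)$: writing $v_{j-1}-v_k=\sum_{i=k+1}^{j-1}(-1)^i\log i-\sum_{i=k+1}^{j-1}a_i(-1)^i\log i$, I would estimate the first sum by pairing consecutive terms to get a telescoping $O(1/i)$ summand and hence an $O(\log j)$ bound via a Wallis-type calculation, and handle the second by Leibniz's test (since $\log i/\sqrt i\to 0$ monotonically eventually). Combined with the factorial decay of $\prod a_i$, the series converges and is uniformly bounded in $k$ for every $1<p<\infty$, giving $q\in\bmo_p(\mu)$.

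Finally, I would obtain \ref{boundedness of hilb} from \cref{T: nec and suff cond}: since \ref{p carl} places $q$ in $\bmo_p(\mu)$ for all $p$, it only remains to show that $\beta(q)=(c_Q-c_{Q^s})_Q\in\ell^\infty$. Using the explicit formula $c_Q=(\LL q\RR_{Q_+}-\LL q\RR_{Q_-})\frac{\mu(Q_-)-\mu(Q_+)}{\mu(Q)}+\LL q\RR_Q$, the same reduction by periodicity and local constancy leaves only three cases to inspect: $Q$ strictly inside some $I_k^b$ (then $c_Q=c_{Q^s}$); dyadic $Q$ of integer length ($c_Q=c_{Q^s}=1/2$); and the crucial $Q=I_k$ with $Q^s=I_k^b$, where a direct substitution yields
\[
c_{I_k}-c_{I_k^b}=(-1)^k\bigl[2a_{k+1}\log(k+1)-\log\tfrac{k+1}{k}\bigr],
\]
which tends to $0$ as $k\to\infty$ because $\log(k+1)/\sqrt{k+1}\to 0$ and $\log(1+1/k)\to 0$. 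Hence $\beta(q)\in\ell^\infty$, and \cref{T: nec and suff cond} yields $L^p(\mu)$-boundedness of $[\Hilb,q]$ for every $1<p<\infty$, completing the proof and the strict inclusion $\BMO(\mu)\subsetneq[\BMO]_p(\mu)$.
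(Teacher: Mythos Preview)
Your approach is essentially the same as the paper's: the same case reduction via periodicity and local constancy of $q$ on each $I_k^b$, the same expansion of $\frac{1}{\mu(I_k)}\int_{I_k}|q-v_k|^p\,d\mu$ as a weighted sum over $j>k$, the same splitting of $u_j-v_k$ into an alternating $\log$--sum and an absolutely convergent remainder, and the same appeal to \cref{T: nec and suff cond} via $\beta\in\ell^\infty$ for part~\ref{boundedness of hilb}. One point needs repair: your ``downward induction on $k$'' for $\langle q\rangle_{I_k}=v_k$ has no base case, so as written it proves nothing. The recurrence $a_k\langle q\rangle_{I_k}+b_ku_k=\langle q\rangle_{I_{k-1}}$ together with $a_kv_k+b_ku_k=v_{k-1}$ forces $(\langle q\rangle_{I_k}-v_k)\mu(I_k)$ to be constant in $k$; to conclude it is zero you must invoke that both $\int_{I_k}q\,d\mu\to 0$ (from the integrability you already proved) and $v_k\mu(I_k)=v_ka_k\cdot\prod_{i<k}a_i\to 0$ (from \eqref{prop of u,v}). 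The paper instead computes $\int_{I_k}q\,d\mu$ directly as a telescoping sum. Your exact formula $c_{I_k}-c_{I_k^b}=(-1)^k\bigl[2a_{k+1}\log(k+1)-\log\tfrac{k+1}{k}\bigr]$ is in fact sharper than the paper's, which only records the approximation $c_{I_k}-c_{I_k^b}\approx(-1)^k\log\tfrac{k}{k+1}$ obtained by replacing $a_{k+1}-b_{k+1}$ with its limit $-1$.
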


\begin{proof}
    We first show that $q$ is integrable on $I_k$; this holds since
    \begin{align*}
        \int_{I_k}|q|d\mu&=\sum_{i\geq k+1}\int_{I_i^b}|u_i|\,d\mu \\
    &= \sum_{i\geq k+1} \left| v_{i-1} - a_i(-1)^i \log i \right| \left(\prod_{j=1}^{i-1}a_j\right) b_i \\
    &= \sum_{i\geq k+1} \left| \left( v_{i-1} - \frac{(-1)^i \log i}{\sqrt{i}} \right) a_{i-1} \right| \left(\prod_{j=1}^{i-2}a_j\right) b_i \\
    &\sim \sum_{i\geq k+1} \left| v_{i-1} a_{i-1} - \frac{(-1)^i \log i}{i} \right| \frac{b_i}{b_{i-1}} \left(\prod_{j=1}^{i-2}a_j\right) b_{i-1} < \infty.
    \end{align*}
    The last sum is convergent because the series of $\{(\prod_{j=1}^{i-1}a_j)b_i\}_{i}$ is convergent, $v_{i-1}a_{i-1}$ is bounded, and $b_i/b_{i-1}$ is roughly equal to 1 for large $i$. To prove \ref{q integ}, using \eqref{prop of u,v} we compute similarly 
    \begin{align*}
        \int_{I_k}qd\mu
        &=\sum_{i\geq k+1}u_i(\prod^{i-1}_{j=1}a_j)b_i\\
    &=\sum_{i\geq k+1}(v_{i-1}-v_ia_i)(\prod^{i-1}_{j=1}a_j)\\
    &=\lim_{n\rightarrow\infty}(v_k\prod^{k}_{j=1}a_j-v_{n}\prod^{n}_{j=1}a_j)\\
    &=v_k\prod^{k}_{j=1}a_j\\
    &=v_k\mu(I_k).
    \end{align*}
    Notice that in the last equality we used again the boundedness of $|v_na_n|$ and $\lim_{n\rightarrow\infty}\prod^{n-1}_{j=1}a_j=0$. \\
   To prove \ref{q not in bmo}, notice that if $I=I_k$, then $I^s=I_k^b$ and using \eqref{prop of u,v}
    \begin{align*}
        \sup_k|\LL q\RR_{I_k}-\LL q\RR_{I_k^b}|&=\sup_k|v_k-u_k|=\sup_k \log(k)=\infty.
    \end{align*}
    We again prove \ref{p carl} through a case by case analysis.
    \begin{enumerate}[label=(\roman*)]
\item Assume $I\subset [0,1)$ and $I \neq I_k$ for every $k \geq 1$. Then $I\subset I_j^b$ for some $j$ and as $q$ is constant on $I_j^b$,
\[\frac{1}{\mu(I)}\int_I|q-\LL q\RR_I|^pd\mu=0.\]
Now consider $|I|<1$ and $I=I_k$ for some $k\geq 1$. 
Since $\LL q\RR_{I_k}=v_k$, the intervals $I^b_k$ partition $[0,1)$ and $q$ is constant on each of these pieces, then
    \[ \int_{I_k}|q - v_k|^p d\mu = \sum_{j=1}^\infty \int_{I_{k+j}^b} |u_{k+j} - v_k|^p d\mu. \]
    Then using the values of $\mu(I_k)$ and $\mu(I^b_{k+j})$, \begin{align*}
        \frac{1}{\mu(I_k)}\int_{I_k}|q - v_k|^p d\mu &= \frac{1}{\mu(I_k)} \sum_{j=1}^\infty |u_{k+j}-v_k|^p \mu(I_{k+j}^b) \\
        &= \frac{1}{\prod_{i=1}^k a_i} \sum_{j=1}^\infty |u_{k+j}-v_k|^p  \left(b_{k+j} \prod_{i=1}^{k+j-1} a_i\right) \\
        &=\sum_{j=1}^\infty |u_{k+j}-v_k|^p  \, b_{k+j} \left(\prod_{i=k+1}^{k+j-1} a_i\right).
    \end{align*}
    In other words, we need to prove that for fixed $1<p<\infty$ 
\begin{equation*}
 F(k)=\sum_{j=1}^\infty |u_{k+j}-v_k|^p\, b_{k+j}\prod_{i=k+1}^{k+j-1} a_i    
\end{equation*}
is uniformly bounded in $k$ for $k \geq 1$.
We split the difference as
\[
 u_{k+j}-v_k = S(k,j)-R(k,j),
\]
\[ S(k,j)=\sum_{i=k+1}^{k+j-1}(-1)^i\log i,\qquad R(k,j)=\sum_{i=k+1}^{k+j}a_i(-1)^i\log i. \]
Notice that as \(| u_{k+j}-v_k |^p \lesssim_p |S(k,j)|^p+ |R(k,j)|^p\), $R(k,j)$ can be controlled by $S(k,j) + O(1)$ and $|S(k,j)| \lesssim \log(k+j)$ for $j$ big enough. By isolating the first term in the sum, it now suffices to control 
\[|u_{k+1}-v_k|^pb_{k+1} + \sum_{j=2}^\infty |\log(k+j)|^p\, b_{k+j}\prod_{i=k+1}^{k+j-1} a_i. \]
Since $|u_{k+1}-v_k|^p=\log(k)^p k^{-p/2}$ is uniformly bounded in $k$ and $b_{k+j} \leq 1$, we can reduce to study the sum for $j \geq 2$. We then argue that 
\begin{align*}
  \sum_{j=2}^\infty |\log(k+j)|^p\, b_{k+j}\prod_{i=k+1}^{k+j-1} a_i  & \leq \sum_{j=2}^\infty |\log(k+j)|^p\, \prod_{i=k+1}^{k+j-1} a_i \\ & \leq \sum_{j=2}^\infty |\log(k+j)|^p\,(k+1)^{-(j-1)/2}
\end{align*}
where we used that $a_i \leq (k+1)^{-1/2}$ for every $i \geq k+1.$ The last series converges as a consequence of the ratio test whenever $k \geq 1$, so $\sup_{k \in \N} F(k) <\infty$.
\item Now assume $|I|\geq 1$. Recall that $q$ is periodic with period 1. Also recall that $\mu([0,1))=1$ and thus $\mu(I)=|I|=m$ for some positive integer $m$. These two conditions ensure that $\LL q\RR_I=\LL q\RR_{[0,1)}$. The calculation above for $I_k$ clearly also works similarly when $k=0$, so that
\[\frac{1}{\mu(I)}\int_I|q-\LL q\RR_I|^pd\mu=\frac{m\int_0^1|q-\LL q\RR_{[0,1)}|^pd\mu}{m}<\infty.\]
\end{enumerate}
We conclude the proof by showing $\sup_I|c_I(q)-c_{I^s}(q)|<\infty$ and consequently \ref{boundedness of hilb}.
\begin{enumerate}[label=(\roman*)]
    \item If $|I|\geq 1$, then $\mu(I_-)=\mu(I_+)$ because $\mu([0,\frac{1}{2}
    ))=\frac{1}{2}$ and $\mu([0,1))=1$. Consequently,
    \[c_I(q)-c_{I^s}(q)=\LL q\RR_I-\LL q\RR_{I^s}=0.\]
    \item Assume that $I\subset [0,1)$. If $\widehat{I}\subset I_k^b$ for some $k\geq 1$, then as $q$ is constant on $I_k^b$,
    \[c_I(q)-c_{I^s}(q)=\LL q\RR_I-\LL q\RR_{I^s}=0.\]
    We are left with $I=I_k$ or $I=I_k^b$ and, by symmetry, we can assume that $I=I_k$. On $I^s=I_k^b$, $q$ is constant. By the definition of $v_k$ and $u_k$, we have
    \begin{align*}
        c_I(q)-c_{I^s}(q)&=(\LL q\RR_{I_+}-\LL q\RR_{I_-})\frac{\mu(I_-)-\mu(I_+)}{\mu(I)}+\LL q\RR_I-\LL q\RR_{I^s}\\
        &\approx v_{k+1}-u_{k+1}+v_k-u_k\\
        &=(-1)^{k}\log \bigg(\frac{k}{k+1} \bigg).        
    \end{align*}
    Hence \(\sup_k|c_{I_k}(q)-c_{I^b_k}(q)|<\infty\) and this concludes the proof.
\end{enumerate}
\end{proof}

 \subsection{Final remarks and open questions} 
 We comment on some potential areas of future investigation inspired by the results and techniques developed in this paper.
 \begin{enumerate}
 \item The $p$-dependent characterization of commutator symbols suggests that similar hierarchies might exist for other operators or symbols in nonhomogeneous settings. In particular, the precise role the parameter $p$ plays in characterizing both the compactness of commutators on $L^p(\mu)$, and two-weight inequalities of the form $L^p(\mu) \rightarrow L^p(\lambda)$, merit further investigation. One would expect these spaces to be non-homogeneous, $p$-dependent analogs of VMO and Bloom-type BMO spaces, respectively, but the classical proofs will break down in the non-homogeneous setting. Nevertheless, powerful tools developed in this paper will likely help characterize these subtle spaces. 
 \item The ingredients in the sparse domination proof may be broadly applicable to other operators or areas of interest in the dyadic non-doubling setting, including \emph{multilinear} martingale transforms, Haar shifts, paraproducts, commutators, and other dyadic operators. Once again, the classical methods will be insufficient, and one will have to discover the appropriate analog of the non-standard sparse forms in the multilinear setting, which poses an interesting but feasible challenge.
 \item Endpoint estimates for Haar shifts can likely be sharpened via a similar strategy used in \cite{BonamiJXYZ}. The class of operators considered there merely satisfy $T:H^1(\mu) \to L^1(\mu)$, where $H^1$ is the martingale Hardy space, while it was proved in \cite{CAW2025} that Haar shifts obey the stronger bound $T:H^1(\mu) \to H^1(\mu)$ under the balanced assumption.
 Furthermore, the characterization of the pre-duals of the spaces $[\BMO]_p(\mu)$ remains mysterious. We know from simple containment relationships that if $X^*= [\BMO]_2(\mu)$ for example, then $\mathrm{h}^1(\mu) \subsetneq X \subsetneq H^1(\mu)$, where $\mathrm{h}^1(\mu)$ is a Hardy space defined using the conditional square function. It would be interesting to characterize $X$ precisely and explore possible connections to the space $H^1_b$. 

 \item The Petermichl shift $\mathbb{S}$ represents a competing dyadic model of the classical Hilbert transform. The characterization of bounds for commutators of $[\mathbb{S},b]$ remains open.
\end{enumerate}

\bibliographystyle{alpha}
\bibliography{Paraproducts-Sources}

\newcommand{\etalchar}[1]{$^{#1}$}
\begin{thebibliography}{BCAPW25}

\bibitem[BCAPW25]{BCAPW}
Tainara Borges, Jos\'e{}~M. Conde~Alonso, Jill Pipher, and Nathan~A. Wagner.
\newblock Commutator estimates for {H}aar shifts with general measures.
\newblock {\em J. Funct. Anal.}, 289(5):Paper No. 110945, 35, 2025.

\bibitem[BJX{\etalchar{+}}23]{BonamiJXYZ}
Aline Bonami, Yong Jiao, Guangheng Xie, Dachun Yang, and Dejian Zhou.
\newblock Products and commutators of martingales in {$H_{1}$} and {BMO}.
\newblock {\em J. Math. Pures Appl. (9)}, 180:188--229, 2023.

\bibitem[CA20]{CondeRBMO}
José~M. Conde-Alonso.
\newblock Bmo from dyadic bmo for nonhomogeneous measures, 2020.

\bibitem[CAP19]{Conde-AlonsoParcet}
Jos\'e{}~M. Conde-Alonso and Javier Parcet.
\newblock Nondoubling {C}alder\'on-{Z}ygmund theory: a dyadic approach.
\newblock {\em J. Fourier Anal. Appl.}, 25(4):1267--1292, 2019.

\bibitem[CAPW24]{CPW}
Jose Conde-Alonso, Jill Pipher, and Nathan Wagner.
\newblock Balanced measures, sparse domination and complexity-dependent weight
  classes.
\newblock {\em Mathematische Annalen}, 391:2209--2253, 2024.

\bibitem[CAW25]{CAW2025}
Jos\'e{}~M. Conde~Alonso and Nathan~A. Wagner.
\newblock Endpoint estimates for {H}aar shift operators with balanced measures.
\newblock {\em J. Geom. Anal.}, 35(9):Paper No. 252, 18, 2025.

\bibitem[CRW76]{CRW1976}
R.~R. Coifman, R.~Rochberg, and Guido Weiss.
\newblock Factorization theorems for {H}ardy spaces in several variables.
\newblock {\em Ann. of Math. (2)}, 103(3):611--635, 1976.

\bibitem[DJ84]{DavidJourne84}
Guy David and Jean-Lin Journ\'e.
\newblock A boundedness criterion for generalized {C}alder\'on-{Z}ygmund
  operators.
\newblock {\em Ann. of Math. (2)}, 120(2):371--397, 1984.

\bibitem[DKPSiG23]{DKPS2023}
K.~Domelevo, S.~Kakaroumpas, S.~Petermichl, and O.~Soler~i Gibert.
\newblock Dyadic lower little {BMO} estimates.
\newblock {\em Publ. Mat.}, 67(2):661--685, 2023.

\bibitem[dlCBD{\etalchar{+}}25]{BBDPW}
Fernando~Benito de~la Cigoña, Tainara Borges, Francesco D'Emilio, Marcus
  Pasquariello, and Nathan~A. Wagner.
\newblock Matrix weighted {$L^p$} estimates in the nonhomogeneous setting,
  2025.

\bibitem[DP23]{DP2023}
Komla Domelevo and Stefanie Petermichl.
\newblock The dyadic and the continuous hilbert transforms with values in
  banach spaces, 2023.

\bibitem[DP{\v S}25]{contsparse}
Komla Domelevo, Stefanie Petermichl, and Kristina~Ana {\v S}kreb.
\newblock Continuous sparse domination and dimensionless weighted estimates for
  the bakry--riesz vector.
\newblock {\em Journal f{\"u}r die reine und angewandte Mathematik (Crelles
  Journal)}, (824):135--166, 2025.

\bibitem[GJ82]{GarnettJones}
John~B. Garnett and Peter~W. Jones.
\newblock B{MO} from dyadic {BMO}.
\newblock {\em Pacific J. Math.}, 99(2):351--371, 1982.

\bibitem[H\"18]{Hanninen}
Timo~S. H\"anninen.
\newblock Equivalence of sparse and {C}arleson coefficients for general sets.
\newblock {\em Ark. Mat.}, 56(2):333--339, 2018.

\bibitem[HFF23]{HF2023}
Irina Holmes~Fay and Valentia Fragkiadaki.
\newblock Paraproducts, {B}loom {BMO} and sparse {BMO} functions.
\newblock {\em Rev. Mat. Iberoam.}, 39(6):2079--2118, 2023.

\bibitem[HL25]{HonLor}
Eline~A. Honig and Emiel Lorist.
\newblock Optimization algorithms for carleson and sparse collections of sets,
  2025.

\bibitem[HLW16]{HLW2016}
Irina Holmes, Michael~T. Lacey, and Brett~D. Wick.
\newblock Bloom's inequality: commutators in a two-weight setting.
\newblock {\em Arch. Math. (Basel)}, 106(1):53--63, 2016.

\bibitem[HPTV14]{HPTV}
Tuomas Hyt\"onen, Carlos P\'erez, Sergei Treil, and Alexander Volberg.
\newblock Sharp weighted estimates for dyadic shifts and the {$A_2$}
  conjecture.
\newblock {\em J. Reine Angew. Math.}, 687:43--86, 2014.

\bibitem[HPW18]{HPW2018}
Irina Holmes, Stefanie Petermichl, and Brett~D. Wick.
\newblock Weighted little bmo and two-weight inequalities for {J}ourn\'e{}
  commutators.
\newblock {\em Anal. PDE}, 11(7):1693--1740, 2018.

\bibitem[Lac17]{Lacey_2017}
Michael~T. Lacey.
\newblock An elementary proof of the {$A_2$} bound.
\newblock {\em Israel Journal of Mathematics}, 217(1):181–195, 2017.

\bibitem[LN15]{lernernazarov}
Andrei~K. Lerner and Fedor Nazarov.
\newblock Intuitive dyadic calculus: the basics, 2015.

\bibitem[LORR17]{LORR2017}
Andrei~K. Lerner, Sheldy Ombrosi, and Israel~P. Rivera-R\'ios.
\newblock On pointwise and weighted estimates for commutators of
  {C}alder\'on-{Z}ygmund operators.
\newblock {\em Adv. Math.}, 319:153--181, 2017.

\bibitem[LPPW10]{LPPW2010}
Michael~T. Lacey, Stefanie Petermichl, Jill~C. Pipher, and Brett~D. Wick.
\newblock Iterated {R}iesz commutators: a simple proof of boundedness.
\newblock In {\em Harmonic analysis and partial differential equations}, volume
  505 of {\em Contemp. Math.}, pages 171--178. Amer. Math. Soc., Providence,
  RI, 2010.

\bibitem[LSMP14]{LSMP}
Luis~Daniel L{\'o}pez-S{\'a}nchez, Jos{\'e}~Mar{\'\i}a Martell, and Javier
  Parcet.
\newblock Dyadic harmonic analysis beyond doubling measures.
\newblock {\em Advances in Mathematics}, 267:44--93, 2014.

\bibitem[Mei03]{Mei}
Tao Mei.
\newblock B{MO} is the intersection of two translates of dyadic {BMO}.
\newblock {\em C. R. Math. Acad. Sci. Paris}, 336(12):1003--1006, 2003.

\bibitem[NPTV17]{NPTV}
Fedor Nazarov, Stefanie Petermichl, Sergei Treil, and Alexander Volberg.
\newblock Convex body domination and weighted estimates with matrix weights.
\newblock {\em Adv. Math.}, 318:279--306, 2017.

\bibitem[NTV03]{NTV2003}
F.~Nazarov, S.~Treil, and A.~Volberg.
\newblock The {$Tb$}-theorem on non-homogeneous spaces.
\newblock {\em Acta Math.}, 190(2):151--239, 2003.

\bibitem[Pet07]{Petermichl}
S.~Petermichl.
\newblock The sharp bound for the {H}ilbert transform on weighted {L}ebesgue
  spaces in terms of the classical {$A_p$} characteristic.
\newblock {\em Amer. J. Math.}, 129(5):1355--1375, 2007.

\bibitem[Rey24]{Rey}
Guillermo Rey.
\newblock Greedy approximation algorithms for sparse collections, 2024.

\bibitem[Tol01a]{TolsaRBMO}
Xavier Tolsa.
\newblock B{MO}, {$H^1$}, and {C}alder\'on-{Z}ygmund operators for non doubling
  measures.
\newblock {\em Math. Ann.}, 319(1):89--149, 2001.

\bibitem[Tol01b]{TolsaT1}
Xavier Tolsa.
\newblock Littlewood-{P}aley theory and the {$T(1)$} theorem with non-doubling
  measures.
\newblock {\em Adv. Math.}, 164(1):57--116, 2001.

\bibitem[Tre13]{TREIL2010}
Sergei Treil.
\newblock Commutators, paraproducts and $\mathrm{BMO}$ in non-homogeneous
  martingale settings.
\newblock {\em Rev. Mat. Iberoam.}, 29(4):1325–1372, 2013.

\bibitem[Wic20]{Wick2020}
Brett~D. Wick.
\newblock Commutators, {BMO}, {H}ardy spaces and factorization: a survey.
\newblock {\em Real Anal. Exchange}, 45(1):1--28, 2020.

\end{thebibliography}
\end{document}